\newcommand{\newdefinition}{\theoremstyle{definition}\newtheorem}
\newcommand{\newproof}[2]{}
\DeclareMathOperator{\Ab}{Ab}
\DeclareMathOperator{\Gal}{Gal}
\DeclareMathOperator{\Ext}{Ext}
\DeclareMathOperator{\Pic}{Pic}
\DeclareMathOperator{\Ch}{Ch}
\DeclareMathOperator{\PSL}{PSL}
\DeclareMathOperator{\Drv}{D}
\DeclareMathOperator{\PGL}{PGL}
\DeclareMathOperator{\SL}{SL}
\DeclareMathOperator{\Aut}{Aut}
\DeclareMathOperator{\divis}{div}
\DeclareMathOperator{\Div}{Div}
\DeclareMathOperator{\RHom}{RHom}
\DeclareMathOperator{\Out}{Out}
\DeclareMathOperator{\Cone}{C}
\DeclareMathOperator{\Sh}{Sh}
\DeclareMathOperator{\HOM}{\mathscr{H}\text{\kern -3pt {\calligra\large om}}\,}
\DeclareMathOperator{\DIV}{\mathscr{D}\text{\kern -3pt {\calligra\large iv}}\,}
\newcommand{\Hom}{{\operatorname{Hom}}}
\numberwithin{equation}{section}
\newtheorem{lemma}[equation]{Lemma}
\newtheorem{corollary} [equation]{Corollary}
\newtheorem{prop} [equation]{Proposition}
\newdefinition{example}[equation]{Example}
\newdefinition{defn}[equation]{Definition} 
\newdefinition{remark}[equation]{Remark}
\newdefinition{notation}[equation]{Notation}
\newproof{proof}{Proof}
\declaretheorem[numberwithin=section]{theorem}
\theoremstyle{definition}
\newtheorem{definition}[theorem]{Definition}
\newcommand{\ZZ}{{\mathbb Z}}
\newcommand{\Spec}{\text{Spec }}
\newcommand{\et}{\'et\@ifstar{\'e}{e\xspace}}
\begin{document}


%






\author{Magnus Carlson and Tomer Schlank}
\address{Department of Mathematics, KTH Royal Institute of Technology, S-100 44 Stockholm, Sweden}
\email{macarlso@math.kth.se}
\address{Einstein Institute of Mathematics, The Hebrew University of Jeru\-sa\-lem, Jeru\-sa\-lem, 91904, Israel}
\email{tomer.schlank@mail.huji.ac.il}



\title[]{The unramified inverse Galois problem and cohomology rings of totally imaginary number fields}
\begin{abstract}
We employ methods from homotopy theory to define new obstructions to solutions of embedding problems. By using these novel obstructions we study embedding problems with non-solvable kernel. We apply these obstructions to study the unramified inverse Galois problem. That is, we show that our methods can be used to determine that certain groups cannot be realized as the Galois groups of unramified extensions of certain number fields. To demonstrate the power of our methods, we give an infinite family of totally imaginary quadratic number fields such that $\Aut(\PSL(2,q^2)),$ for $q$ an odd prime power, cannot be realized as an unramified Galois group over $K,$ but its maximal solvable quotient can. To prove this result, we determine the ring structure of the étale cohomology ring $\displaystyle H^* (\Spec \mathcal{O}_K;\ZZ/ 2\ZZ)$  where $\mathcal{O}_K$ is the ring of integers of an arbitrary totally imaginary number field $K.$
\end{abstract}
\maketitle

\section{Introduction}
What is the structure of the absolute Galois group $\Gamma_K$ of a field $K?$ The famous inverse Galois problem approaches this question by asking what finite groups occur as finite quotients of $\Gamma_K $ (see for example \cite{MalleInverse} or \cite{SerreGalois}). In this paper, we use homotopy-theoretical methods to attack a closely related problem, that of embedding problems. We construct new obstructions to the solvability of embedding problems for profinite groups, and these obstructions allow one to study embedding problems with perfect kernel. As a specific example of how to apply these techniques, we give an infinite family of totally imaginary number fields such that for any field $K$ in this family, $\Aut(\PSL(2,q^2)),$ where $q$ an odd prime power, cannot be realized as an unramified Galois group over $K.$ To prove this result we need to determine the ring structure of the étale cohomology ring $\displaystyle H^*(X,\ZZ/2\ZZ)$ for $X = \Spec \mathcal{O}_L$ the ring of integers of an arbitrary totally imaginary number field $L.$ To allow ourselves to state the results more precisely, recall the following definition.
\begin{definition}
Let $\Gamma$ be a profinite group. Then a finite embedding problem $\textbf{E}$ for $\Gamma$ is a diagram $$\begin{tikzcd}   &  \Gamma \arrow[d,"p"]\\  G \arrow[r,"f"] & H \end{tikzcd}$$ where $G,H$ are finite groups, $f$ is surjective and $p$ is continuous and surjective where $H$ is given the discrete topology. We say that the embedding problem has a proper solution if there exists a continuous surjective homomorphism $q:\Gamma \rightarrow G$ such that $p = fq.$  The embedding problem has a weak solution if there exists a continuous map $q: \Gamma \rightarrow G$ (not neccesarily surjective) such that $p=fq.$ We will call $\ker f$ the kernel of the embedding problem $\textbf{E}.$   
\end{definition}
\noindent If $\Gamma = \Gamma_K$ is the absolute Galois group of a field $K$ and $H$ in the above diagram is trivial, then a proper solution to $\textbf{E}$ corresponds to a realization of $G$ as a Galois group over $K.$  Suppose that we have an embedding problem $\textbf{E}$ as above which we suspect has no solutions. One method to prove that no (weak) solutions exist is to note that if the kernel $P = \ker f$ is abelian, then the extension
$$0 \rightarrow P \rightarrow G \xrightarrow{f} H \rightarrow 0$$ is classified by an element $$z \in H^2(H,P).$$ We can pullback $z$ by the map $p: \Gamma \rightarrow H$ and if the embedding problem $\textbf{E}$ has a solution, then $$p^*(z) \in H^2(\Gamma,P)$$ is zero. If however the kernel of $f$ is not abelian, it is less clear how to proceed. One method is as follows: if $P_{ab}$ is the abelianization of $P,$ and $[P,P]$ the commutator subgroup, consider the abelianized embedding problem  $\textbf{E}_{ab}$ given by $$\begin{tikzcd}  & & &  \Gamma \arrow[d,"p"]  \\ 0 \arrow[r] & P_{ab} \arrow[r] & G/[P,P] \arrow[r]  &  H  \arrow[r] & 0.\end{tikzcd}$$  By the above considerations we get an obstruction element $$z \in H^2(\Gamma,P_{ab})$$ to the solvability of $\textbf{E}_{ab}.$  Further, it is clear that if the original embedding problem $\textbf{E}$ is solvable, so is $\textbf{E}_{ab}.$ It thus follows that if $z \neq 0,$ then $\textbf{E}$ is not solvable. This method of using obstructions in $H^2(\Gamma,P_{ab})$ is classical, see for example \cite[Ch.3]{EmbeddingFaddeev}. The relation of this obstruction with the Brauer-Manin obstruction was studied by A. P\'al and the second author in \cite{SchlankBrauer}. There are obvious limitations to this method. For example, if $P$ is perfect, then we get no useful information since $H^2(\Gamma,P_{ab})=0.$  We develop a theory which remedies this situation by producing non-trivial obstructions to the solution of embedding problems $\textbf{E}$ with perfect kernel, i.e. when $P_{ab}=0.$ For any embedding problem with perfect kernel $P$ and any finitely generated abelian group $A$ together with a fixed element $a \in A$ we construct an obstruction $$o_2^a \in H^3(\Gamma,H_2(P,A)).$$ It should be noted that the obstruction $$o_2^a \in H^3(\Gamma,H_2(P,A))$$ is part of a family of higher obstructions $$o_n^a \in H^{n+1}(\Gamma,H_n(P,A))$$  for $n=1,2, \ldots,$ where $o_{n+1}^a$ is defined if $o_n^a$ vanishes. This obstruction generalizes the classical one, in the sense that if $n=1, A = \ZZ$ and $a=1,$ then $H_1(P,\ZZ) = P_{ab},$ and $o_1^1 \in H^2(\Gamma,P_{ab})$ coincides with the classical obstruction $p^\ast
(z)$ outlined above. For $n \geq 2,$ the elements $o_n^a,$ thus really are higher obstructions.  The natural formulation for these higher obstructions is in the language of homotopy theory, as developed in \cite{Barnea-Schlank}. For $n=2,$ it turns out that one can give an elementary construction of these obstructions in terms of crossed modules and we choose this development in this paper, leaving the general formulation of these obstructions to a future paper. \\ \\
We now sketch how we apply the above methods to show that $\Aut(\PSL(2,q^2))$ for $q$ an odd prime power cannot be realized as the Galois group of an unramified Galois extension of certain totally imaginary number fields. Let $K$ be a totally imaginary number field and let $\Gamma_K^{ur}$ be the unramified Galois group of $K.$ Suppose that  $$H^1(\Gamma_K^{ur},\ZZ/2\ZZ) \cong \ZZ/2\ZZ \times \ZZ/2\ZZ$$ and let $a,b$ be any choice of generators. The elements $a$ and $b$  define a surjective map $p_{ab}:\Gamma_K^{ur} \rightarrow \ZZ/2\ZZ \times \ZZ/2\ZZ.$ We then have the embedding problem given by the diagram $$\begin{tikzcd} & & &  \Gamma_K^{ur} \arrow[d,"p_{ab}"] \arrow[dl,dotted,"?"description] \\ 0 \arrow[r] & \PSL(2,q^2) \arrow[r] & \Aut(\PSL(2,q^2)) \arrow[r]  &  \ZZ/2\ZZ \times \ZZ/2\ZZ  \arrow[r] & 0\end{tikzcd}$$ where the map $$\Aut(\PSL(2,q^2)) \rightarrow \Out(\PSL(2,q^2)) \cong \ZZ/2\ZZ \times \ZZ/2\ZZ$$ is the natural quotient map. We see that $\Aut(\PSL(2,q^2))$ occurs as an unramified Galois group over $K$ if and only if for some surjection  $$p_{ab}: \Gamma_K^{ur} \rightarrow \ZZ/2\ZZ \times \ZZ/2\ZZ,$$ corresponding to generators $a,b$ of $H^1(\Gamma_K^{ur},\ZZ/2\ZZ),$ there is a surjective map filling in the dotted arrow. From the above obstructions to solutions to embedding problems with perfect kernel we get for each $p_{ab}$ an obstruction $$o_2 \in H^3(\Gamma_K^{ur},H_2(\PSL(2,q^2),\ZZ/2\ZZ)) \cong H^3(\Gamma_K^{ur},\ZZ/2\ZZ),$$ depending of course on $a$ and $b.$ We shall prove that the obstruction $o_2$ can be identified with the cup product $c^2 \cup d$ where $c$ and $d$ are some two distinct and non-zero linear combinations  of the elements $a,b \in H^1(\Gamma_K^{ur},\ZZ/2\ZZ).$ If the obstruction $o_2$ is non-zero, for each choice of generators of $H^1(\Gamma_K^{ur},\ZZ/2\ZZ),$ then $\Aut(\PSL(2,q^2))$ cannot be realized as an unramified Galois group over $K.$ Letting $X = \Spec \mathcal{O}_K$ be the ring of integers of $K$ and $X_{et}$ be the small étale site of $X$ and $B \Gamma_K^{ur}$ the classifying site of $\Gamma_K^{ur},$  there is a tautological map of sites $$k:X_{et} \rightarrow B\pi_1(X) = B \Gamma_K^{ur}.$$  Further, $k$ induces an isomorphism $$H^1(X,\ZZ/2\ZZ) \cong H^1(\Gamma_K^{ur},\ZZ/2\ZZ),$$ so to show that $o_2$ is non-zero it clearly suffices to prove that $k^*(o_2)$ is non-zero and this is equivalent to the cup product $a^2 \cup b$ of $$a,b \in H^1(X,\ZZ/2\ZZ) \cong \ZZ/2\ZZ \times \ZZ/2\ZZ$$ being non-zero. To prove that this cup product is non-zero, we must study the étale cohomology ring of a totally imaginary number field $X = \Spec \mathcal{O}_K.$ In the beautiful paper \cite{MazurNotes}, Mazur, among other things, determined the étale cohomology groups $H^i(X,\ZZ/n\ZZ)$ using Artin-Verdier duality. We continue the investigation into the structure of the étale cohomology of a number field by explicitly determining the ring $\displaystyle H^*(X,\ZZ/2\ZZ).$ Using our determination of the ring $\displaystyle H^*(X,\ZZ/2\ZZ)$ we can give neccessary and sufficient conditions on the field $K$ for the above cup product to be non-zero and hence give conditions for which $\Aut(\PSL(2,q^2))$ is not the Galois group of an unramified extension of $K.$  These conditions are satisfied for a wide variety of totally imaginary number fields $K,$  a more precise description of these conditions will follow as we now state our main theorems. \ \\ \noindent

The full description of our results involve some notation so as to give the reader a taste of what we prove, we provide some special cases, leaving the most general formulation to the main text. For $x \in H^1(X,\ZZ/2\ZZ)$ we will view $x$ as the ring of integers of a quadratic unramified extension $L=K(\sqrt{c})$ with $c \in K^*.$ Note that $\divis(c)= 2I$ for some fractional ideal of $K$ since $L$ is an unramified extension. For a proof of the following theorem, see Theorem \ref{prop:triplecup}.
\begin{theorem} \label{thm:cupstr}
Let $X = \Spec \mathcal{O}_K$ where $K$ is a totally imaginary number field and let $x$ and $y$ be elements in $H^1(X,\ZZ/2\ZZ),$ corresponding to the unramified quadratic extensions $L= K(\sqrt{c}), M= K(\sqrt{d}),$ where $c,d \in K^*.$  Let $$\divis(d)/2 = \prod_i \mathfrak{p}_i^{e_i}$$ be the factorization of $\divis(d)/2$ into prime ideals in $\mathcal{O}_K.$ Then $$x \cup x \cup y \in H^3(X,\ZZ/2\ZZ)$$ is non-zero if and only if $$\sum_{\mathfrak{p}_i \text{ inert in } L} e_i \equiv 1 \pmod{2}.$$ 
\end{theorem}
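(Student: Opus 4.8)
The plan is to compute the triple cup product $x\cup x\cup y$ by combining Artin--Verdier duality (to pin down $H^3(X,\ZZ/2\ZZ)\cong\ZZ/2\ZZ$ for $K$ totally imaginary, as in Mazur's notes) with an explicit local-to-global analysis of the cup product pairing. The key reduction is the following: since $X$ is a totally imaginary arithmetic curve, $H^3(X,\ZZ/2\ZZ)$ is one-dimensional and is detected by summing up local invariants at the finite places; more precisely, the perfect pairing $H^1(X,\ZZ/2\ZZ)\times H^2_c(X,\ZZ/2\ZZ)\to H^3_c(X,\ZZ/2\ZZ)\cong\ZZ/2\ZZ$ from Artin--Verdier duality lets me identify $x\cup x\cup y$ with the value on $x$ of the class $x\cup y\in H^2$, reinterpreted via the excision/localization sequence relating $H^*(X)$ to the sum of the $H^*(K_{\mathfrak p})$ over finite $\mathfrak p$. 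So the first step is to set up this duality framework carefully and reduce the computation of $x\cup x\cup y$ to a sum of purely local triple cup products $\sum_{\mathfrak p}\,(x_{\mathfrak p}\cup x_{\mathfrak p}\cup y_{\mathfrak p})$ over the finite places of $K$, where $x_{\mathfrak p},y_{\mathfrak p}$ are the images in $H^1(K_{\mathfrak p},\ZZ/2\ZZ)=K_{\mathfrak p}^*/(K_{\mathfrak p}^*)^2$.

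Second, I would evaluate these local terms. Because $L=K(\sqrt c)$ and $M=K(\sqrt d)$ are \emph{unramified}, the class $x_{\mathfrak p}$ is unramified at every finite $\mathfrak p$, i.e.\ it is either trivial or the unramified quadratic class, and similarly $y_{\mathfrak p}$ has valuation $\operatorname{val}_{\mathfrak p}(d)\equiv 0\pmod 2$ away from the support of $\divis(d)/2$ and is "ramified-looking" (valuation $1$) exactly at the primes $\mathfrak p_i$ dividing $\divis(d)/2$. Over a local field of residue characteristic $\neq 2$, the triple Massey/cup product $x_{\mathfrak p}\cup x_{\mathfrak p}\cup y_{\mathfrak p}\in H^3(K_{\mathfrak p},\ZZ/2\ZZ)\cong\ZZ/2\ZZ$ can be computed explicitly from the tame symbol: writing $u$ for a unit with $x_{\mathfrak p}=(u)$ and $\pi$ for a uniformizer, one has $(u)\cup(u)\cup(\pi)\neq 0$ iff $u$ is a non-square in the residue field, i.e.\ iff $\mathfrak p$ is \emph{inert} in $L$; and the term vanishes whenever $y_{\mathfrak p}$ is itself a unit class (valuation $0$), since then all three classes are unramified and $H^3$ of the maximal unramified extension vanishes. (At the dyadic primes a small separate argument is needed, but since $L/K$ is unramified, $x$ is still an unramified class there and the same residue-field criterion applies after base change to the maximal unramified subextension.) Hence the $\mathfrak p$-local term contributes $e_i\bmod 2$ when $\mathfrak p=\mathfrak p_i$ is inert in $L$, and $0$ otherwise.

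Third, I sum: $x\cup x\cup y = \sum_{\mathfrak p_i\ \text{inert in}\ L} e_i \pmod 2$, which is exactly the stated criterion. I should also check the edge cases --- that the recipe is independent of the chosen representatives $c,d$ (any change multiplies $c$ or $d$ by a square, hence does not alter the classes $x,y$), and that the contributions at the real-or-complex places are irrelevant because $K$ is totally imaginary so there is no archimedean obstruction contributing to $H^3(X,\ZZ/2\ZZ)$.

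The main obstacle I anticipate is the second step, and specifically making the local triple cup product computation rigorous and uniform across \emph{all} finite places, including the dyadic ones: away from $2$ the computation is the familiar Galois-cohomological symbol calculus, but at primes above $2$ the structure of $K_{\mathfrak p}^*/(K_{\mathfrak p}^*)^2$ is larger and the naive tame-symbol formula fails. The saving grace --- which I would lean on heavily --- is the hypothesis that $L/K$ (and $M/K$) is \emph{unramified}: this forces $x_{\mathfrak p}$ to lie in the unramified line of $H^1(K_{\mathfrak p},\ZZ/2\ZZ)$ for every finite $\mathfrak p$, and the cup product of an unramified class with anything is computed by the residue map landing in $H^1$ of the residue field, reducing the dyadic case to the same inert-vs-split dichotomy. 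The other technical point requiring care is the precise identification, via Artin--Verdier duality, of the global-to-local "sum of invariants" map with the cup product $H^1\otimes H^1\otimes H^1\to H^3(X,\ZZ/2\ZZ)$, i.e.\ checking the compatibility of the duality pairing with cup products; this is where I would either cite the relevant compatibility from the literature on arithmetic duality or, if necessary, prove it by hand using the trace/residue formalism.
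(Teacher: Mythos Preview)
There is a genuine gap. You claim that $H^3(K_{\mathfrak p},\ZZ/2\ZZ)\cong\ZZ/2\ZZ$ and propose to compute the global class as a sum of local triple cup products $x_{\mathfrak p}\cup x_{\mathfrak p}\cup y_{\mathfrak p}$ in these groups. But every non-archimedean local field has (strict) cohomological dimension $2$, so in fact $H^3(K_{\mathfrak p},\ZZ/2\ZZ)=0$ for every finite place $\mathfrak p$, and your local triple products vanish identically. The same obstruction appears if you replace $K_{\mathfrak p}$ by $\Spec\mathcal O_{K_{\mathfrak p}}$: the \'etale cohomology of a henselian local ring agrees with that of its residue field, which has cohomological dimension $1$, so again $H^3=0$. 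The nonzero class in $H^3(X,\ZZ/2\ZZ)\cong\ZZ/2\ZZ$ is a genuinely global invariant and is invisible upon restriction to any (or all) completions; the ``sum of local invariants'' map you describe is the zero map and cannot detect $x\cup x\cup y$.

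The paper's proof works globally throughout. Rather than localizing, it dualizes the cup-with-$x$ map $c_x\colon H^i(X,\ZZ/2\ZZ)\to H^{i+1}(X,\ZZ/2\ZZ)$ under Artin--Verdier duality to a map $c_x^\sim\colon\Ext_X^{2-i}(\ZZ/2\ZZ,\mathbb G_{m,X})\to\Ext_X^{3-i}(\ZZ/2\ZZ,\mathbb G_{m,X})$, identifies $c_x$ with the connecting map of the norm sequence $0\to\ZZ/2\ZZ_X\to p_*p^*\ZZ/2\ZZ_X\to\ZZ/2\ZZ_X\to 0$ for the double cover $p\colon\Spec\mathcal O_L\to X$, and then computes $c_x^\sim$ by hand using explicit resolutions of $\ZZ/2\ZZ$ and of $\mathbb G_{m,X}$. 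The output is that $x\cup x\cup y\neq 0$ if and only if the ideal class of $\divis(d)/2$ in $\Pic(X)/2$ is \emph{not} in the image of $N_{L/K}\colon\Pic(\mathcal O_L)\to\Pic(\mathcal O_K)$; the inert-prime formula then drops out of the Artin map. Your intuition that the answer should be a sum over primes weighted by inertness is correct, but it enters through global class field theory (the Artin symbol on $\Cl(K)$), not through local $H^3$'s. If you want a localization-style argument, the relevant target is $\bigoplus_{\mathfrak p}H^1(\kappa(\mathfrak p),\ZZ/2\ZZ)$ via the Gysin sequence, not $\bigoplus_{\mathfrak p}H^3(K_{\mathfrak p},\ZZ/2\ZZ)$.
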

\begin{remark}
Christian Maire \cite{MaireUnramified} applied Theorem \ref{thm:cupstr} to verify some special cases of the unramified Fontaine-Mazur conjecture. 
\end{remark}
In the following theorem, if $K$ is a totally imaginary number field note as before that for any quadratic unramified extension $L=K(\sqrt{c})$ that $\divis(c)$ is even, so that $\divis(c)/2$ makes sense. The following theorem is proved in Theorem \ref{thm:nosolution2} 
\begin{theorem} \label{thm:nosolution2}
Let $K$ be a totally imaginary number field and suppose that for any two distinct quadratic extension $L=K(\sqrt{c}), M = K(\sqrt{d})$ where $c,d \in K^* \setminus (K^*)^2,$ that if $$\divis(d)/2 = \prod \mathfrak{p}_i^{e_i}$$ is the prime factorization of $\divis(d)/2,$  $$\sum_{\mathfrak{p}_i \text{ unramified in L} } e_i \equiv 1  \mod 2.$$ Then $\Aut(\PSL(2,q^2))$ cannot be realized as the Galois group of an unramified extension of $K,$ but its maximal solvable quotient $$\Aut(\PSL(2,q^2))^{sol} \cong \ZZ/2\ZZ \oplus \ZZ/2\ZZ$$ can.
\end{theorem}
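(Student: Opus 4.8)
The plan is to deduce the statement from the obstruction theory outlined above together with Theorem~\ref{thm:cupstr}. Suppose, towards a contradiction, that $\Aut(\PSL(2,q^2))$ is the Galois group of an unramified extension of $K$, i.e.\ that there is a continuous surjection $\Gamma_K^{ur}\twoheadrightarrow\Aut(\PSL(2,q^2))$. Composing with $\Aut(\PSL(2,q^2))\to\Out(\PSL(2,q^2))\cong\ZZ/2\ZZ\times\ZZ/2\ZZ$ produces a surjection $p_{ab}\colon\Gamma_K^{ur}\to\ZZ/2\ZZ\times\ZZ/2\ZZ$, and the original map is then a proper — hence a weak — solution of the embedding problem
$$\begin{tikzcd} & & & \Gamma_K^{ur}\arrow[d,"p_{ab}"]\\ 0\arrow[r] & \PSL(2,q^2)\arrow[r] & \Aut(\PSL(2,q^2))\arrow[r] & \ZZ/2\ZZ\times\ZZ/2\ZZ\arrow[r] & 0\end{tikzcd}$$
Its kernel $P=\PSL(2,q^2)$ is a non-abelian simple group, hence perfect, so $H_1(P,\ZZ/2\ZZ)=0$ and the first obstruction vanishes identically; therefore the second obstruction $o_2\in H^3(\Gamma_K^{ur},H_2(P,\ZZ/2\ZZ))$ is defined. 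The Schur multiplier of $\PSL(2,q^2)$ is $\ZZ/2\ZZ$ for every odd prime power $q\neq 3$, and $\ZZ/6\ZZ$ in the exceptional case $\PSL(2,9)\cong A_6$; in either case the universal coefficient theorem gives $H_2(P,\ZZ/2\ZZ)\cong\ZZ/2\ZZ$, with trivial $\Gamma_K^{ur}$-action. Hence $o_2\in H^3(\Gamma_K^{ur},\ZZ/2\ZZ)$, and by construction of this obstruction the existence of a weak solution forces $o_2=0$.

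The crux of the argument is the explicit identification of $o_2$. Being functorial in the embedding problem, $o_2$ is the pullback along $p_{ab}$ of a universal class $\omega\in H^3(\ZZ/2\ZZ\times\ZZ/2\ZZ,\ZZ/2\ZZ)$ depending only on the $2$-extension (equivalently, crossed module) associated to $1\to\PSL(2,q^2)\to\Aut(\PSL(2,q^2))\to\ZZ/2\ZZ\times\ZZ/2\ZZ\to1$ and on the nonzero element of $\ZZ/2\ZZ$. One evaluates $\omega$ inside $H^*(\ZZ/2\ZZ\times\ZZ/2\ZZ,\ZZ/2\ZZ)\cong\ZZ/2\ZZ[a,b]$ using the structure of $\Aut(\PSL(2,q^2))$ and of the universal central extension of $\PSL(2,q^2)$; this is the step where the particular group $\PSL(2,q^2)$, rather than an arbitrary finite simple group with Schur multiplier $\ZZ/2\ZZ$, enters. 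The outcome is that $\omega$ factors as the square of one nonzero linear form times a second, distinct, nonzero linear form; pulling back along $p_{ab}$ gives $o_2=u^2\cup v$ for suitable distinct nonzero classes $u,v\in H^1(\Gamma_K^{ur},\ZZ/2\ZZ)$. I expect this computation — carried out as a separate proposition in the body of the paper — to be the main obstacle; the remainder is assembly.

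Next, transport the obstruction to the scheme $X=\Spec\mathcal{O}_K$. Let $k\colon X_{et}\to B\Gamma_K^{ur}$ be the tautological map of sites; it induces a ring homomorphism on cohomology which is an isomorphism in degree $1$, so $k^*o_2=\bar u^2\cup\bar v\in H^3(X,\ZZ/2\ZZ)$, where $\bar u,\bar v\in H^1(X,\ZZ/2\ZZ)$ are the images of $u,v$; in particular $k^*o_2\neq0$ forces $o_2\neq0$. Viewing $\bar u$ and $\bar v$ as corresponding to unramified quadratic extensions $L=K(\sqrt{c})$ and $M=K(\sqrt{d})$ of $K$, and writing $\divis(d)/2=\prod_i\mathfrak p_i^{e_i}$, Theorem~\ref{thm:cupstr} asserts that $k^*o_2=\bar u^2\cup\bar v$ is non-zero if and only if $\sum_{\mathfrak p_i \text{ inert in } L}e_i\equiv1\pmod2$. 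As $p_{ab}$ ranges over all surjections $\Gamma_K^{ur}\to\ZZ/2\ZZ\times\ZZ/2\ZZ$, the pair $(\bar u,\bar v)$ ranges over all ordered pairs of distinct nonzero classes of $H^1(X,\ZZ/2\ZZ)$ — equivalently, over all ordered pairs of distinct unramified quadratic extensions of $K$ — since such a surjection is the same datum as an ordered pair of distinct nonzero classes of $H^1(\Gamma_K^{ur},\ZZ/2\ZZ)$ (over $\ZZ/2\ZZ$ any two such are linearly independent), and $(u,v)$ is obtained from this pair by an invertible $\ZZ/2\ZZ$-linear substitution. Thus the hypothesis of the theorem is precisely the requirement, read through Theorem~\ref{thm:cupstr}, that $\bar u^2\cup\bar v\neq0$ for every such pair; consequently $o_2\neq0$ for every choice of $p_{ab}$. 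But if $\Aut(\PSL(2,q^2))$ were realized as above we would obtain a $p_{ab}$ with $o_2=0$ — a contradiction. Therefore $\Aut(\PSL(2,q^2))$ is not the Galois group of any unramified extension of $K$.

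It remains to realize the maximal solvable quotient. Since $\PSL(2,q^2)$ is perfect and $\Aut(\PSL(2,q^2))/\PSL(2,q^2)\cong\ZZ/2\ZZ\times\ZZ/2\ZZ$ is abelian, the derived series of $\Aut(\PSL(2,q^2))$ stabilizes at $\PSL(2,q^2)$, so $\Aut(\PSL(2,q^2))^{sol}\cong\ZZ/2\ZZ\oplus\ZZ/2\ZZ$. Under the hypothesis — which is non-vacuous only when $K$ admits at least two distinct unramified quadratic extensions — the $\ZZ/2\ZZ$-rank of $H^1(\Gamma_K^{ur},\ZZ/2\ZZ)$, i.e.\ the $2$-rank of the ideal class group $\Cl_K$, is at least $2$; hence, by class field theory, there is a surjection $\Gamma_K^{ur}\to\ZZ/2\ZZ\times\ZZ/2\ZZ$, i.e.\ an unramified extension $N/K$ with $\Gal(N/K)\cong\ZZ/2\ZZ\oplus\ZZ/2\ZZ\cong\Aut(\PSL(2,q^2))^{sol}$. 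This realizes the maximal solvable quotient and completes the plan.
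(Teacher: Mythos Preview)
Your proposal is correct and follows essentially the same route as the paper. The paper packages the argument through the abstractions of property $(\ast)_b$ for $\Aut(\PSL(2,q^2))$ (Proposition~\ref{prop:autproperty}) and property $(\ast\ast)_b$ for $\Gamma_K^{ur}$, then invokes Corollary~\ref{cor:nosurjections}; but the underlying steps---identifying the universal obstruction $\omega$ as $a^2\cup b$, pulling back along $p_{ab}$, transporting via $k:X_{et}\to B\Gamma_K^{ur}$, and applying Proposition~\ref{prop:triplecup}---are exactly the ones you spell out, and your explicit treatment of the solvable quotient (which the paper leaves implicit in the Section~\ref{sec:proppro} proof) is a welcome addition.
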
 
\begin{remark}
This theorem is not of maximal generality, but is an example of an application of our methods. There is nothing that stops one from proving a similar theorem, using our methods, for larger classes of groups. 
\end{remark}
Let us note that there are infinitely many totally imaginary number fields for which $\Aut(\PSL(2,q^2))$ cannot be an unramified Galois group. In fact, the following proposition, corresponding Corollary \ref{prop:unramifiedQuadraticCup1} in the main text, shows that there are even infinitely many totally imaginary quadratic number fields with this property. 
\begin{prop} 
Let $p_1,p_2,p_3$ be three primes such that $$p_1 p_2 p_3 \equiv 3 \mod 4,$$  and $$\genfrac(){}{0}{p_i}{p_j} = -1$$ for all $i \neq j.$  Let $K = \mathbb{Q}(\sqrt{-p_1p_2p_3}).$ Then $\Aut(\PSL(2,q^2))$ cannot be realized as the Galois group of an unramified extension of $K,$ but its maximal solvable quotient $\Aut(\PSL(2,q^2))^{sol} \cong \ZZ/2\ZZ \oplus \ZZ/2\ZZ$ can.
\end{prop}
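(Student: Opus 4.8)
The strategy is to verify that $K=\mathbb{Q}(\sqrt{-p_1p_2p_3})$ satisfies the hypothesis of Theorem~\ref{thm:nosolution2}; the conclusion is then exactly that theorem. Writing $X=\Spec\OO_K$ and using the isomorphism $H^1(X,\ZZ/2\ZZ)\cong H^1(\Gamma_K^{ur},\ZZ/2\ZZ)$ together with Theorem~\ref{thm:cupstr}, that hypothesis is the assertion that $x\cup x\cup y\neq 0$ in $H^3(X,\ZZ/2\ZZ)$ for every ordered pair of distinct nonzero classes $x,y\in H^1(X,\ZZ/2\ZZ)$. So that is what I would check.

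First I would pin down the relevant $2$-torsion. Since $p_1p_2p_3\equiv 3\pmod 4$ we have $-p_1p_2p_3\equiv 1\pmod 4$, so the discriminant of $K$ is $\Delta_K=-p_1p_2p_3$, which is divisible by exactly the three odd primes $p_1,p_2,p_3$; by genus theory $\dim_{\F_2}\Cl(K)/2\Cl(K)=3-1=2$, hence $H^1(X,\ZZ/2\ZZ)\cong\Hom(\Cl(K),\ZZ/2\ZZ)\cong(\ZZ/2\ZZ)^2$, and in particular the surjection $\Gamma_K^{ur}\twoheadrightarrow\Cl(K)/2\Cl(K)$ realizes the maximal solvable quotient $\Aut(\PSL(2,q^2))^{sol}\cong\ZZ/2\ZZ\oplus\ZZ/2\ZZ$ as an unramified Galois group over $K$. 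Writing $p_i^{\ast}=(-1)^{(p_i-1)/2}p_i$ for the prime discriminant attached to $p_i$, the congruence $p_1p_2p_3\equiv 3\pmod 4$ forces an odd number of the $p_i$ to be $\equiv 3\pmod 4$, so $p_1^{\ast}p_2^{\ast}p_3^{\ast}=\Delta_K$; genus theory then identifies the genus field of $K$ with $\mathbb{Q}(\sqrt{p_1^{\ast}},\sqrt{p_2^{\ast}},\sqrt{p_3^{\ast}})$, so the three nontrivial unramified quadratic extensions of $K$ are exactly $L_i=K(\sqrt{p_i^{\ast}})$, $i=1,2,3$ (they lie in the Hilbert class field and $K$ has no real place, so each is unramified at every prime).

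Next I would, for each ordered pair $i\neq j$, apply Theorem~\ref{thm:cupstr} with $x$ the class of $L_i$ and $y$ the class of $L_j$, taking the representative $d=p_j^{\ast}$. If $\mathfrak{q}_j$ is the unique prime of $\OO_K$ above $p_j$ then $p_j\OO_K=\mathfrak{q}_j^{2}$, so $\divis(d)/2=\divis(p_j^{\ast})/2=\mathfrak{q}_j$ is a single prime occurring with exponent one, and Theorem~\ref{thm:cupstr} says $x\cup x\cup y\neq 0$ precisely when $\mathfrak{q}_j$ is inert in $L_i$. As $i\neq j$ the prime $\mathfrak{q}_j$ is coprime to $2p_i^{\ast}$ and has residue field $\F_{p_j}$, so it is inert in $L_i=K(\sqrt{p_i^{\ast}})$ if and only if $p_i^{\ast}$ is a nonsquare modulo $p_j$, i.e. $\genfrac(){}{0}{p_i^{\ast}}{p_j}=-1$; quadratic reciprocity in the form $\genfrac(){}{0}{p^{\ast}}{q}=\genfrac(){}{0}{q}{p}$ rewrites this as $\genfrac(){}{0}{p_j}{p_i}=-1$, which is one of our standing hypotheses. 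Thus $x\cup x\cup y\neq 0$ for all ordered pairs of distinct nonzero classes, the hypothesis of Theorem~\ref{thm:nosolution2} holds, and the proposition follows. (For safety one also checks the criterion is independent of the representative chosen for $y$: the alternative $d'=\prod_{l\neq j}p_l^{\ast}$ gives $\divis(d')/2=\prod_{l\neq j}\mathfrak{q}_l$, and the principal relation $\mathfrak{q}_1\mathfrak{q}_2\mathfrak{q}_3=(\sqrt{-p_1p_2p_3})$ shows the relevant parity is unchanged.)

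Almost all of the content of the proposition is carried by Theorem~\ref{thm:nosolution2}, so the only step that takes genuine care is the reciprocity bookkeeping in the previous paragraph — tracking the signs $(-1)^{(p_i-1)/2}$ in the $p_i^{\ast}$, confirming the representative-independence just noted, and checking that each of the six ordered pairs is covered by a hypothesis of the asserted form. Finally, infinitely many admissible triples $p_1,p_2,p_3$ (hence infinitely many distinct such fields $K$) exist by Dirichlet's theorem: fix any prime $p_1\equiv 3\pmod 4$ and choose primes $p_2,p_3\equiv 1\pmod 4$ in suitable residue classes so that $\genfrac(){}{0}{p_i}{p_j}=-1$ for all $i\neq j$.
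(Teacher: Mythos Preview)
Your proof is correct and follows essentially the same route as the paper: reduce to the hypothesis of Theorem~\ref{thm:nosolution2}, identify the three nontrivial unramified quadratic extensions of $K$ as $K(\sqrt{p_i^{\ast}})$, and verify via the Legendre-symbol hypothesis that $\mathfrak{q}_j$ is inert in $K(\sqrt{p_i^{\ast}})$ for $i\neq j$. The paper is terser---it cites \cite{SpearmanUnramified} for the description of the quadratic subextensions and reduces the inertia statement to ``$p_j$ is inert in $\mathbb{Q}(\sqrt{p_i^{\ast}})$'' without spelling out the reciprocity step---whereas you supply the genus-theory computation and the explicit quadratic-reciprocity identity $\genfrac(){}{0}{p^{\ast}}{q}=\genfrac(){}{0}{q}{p}$, and add a representative-independence check and an existence remark; these are welcome elaborations but not a different argument.
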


 Little is known about $\Gamma_K^{ur}$ in general. The group $\Gamma_K^{ur}$ can be infinite, for example, Golod and Shafarevich \cite{Golod} showed that there are number fields which have infinite Hilbert class field tower, an example being $$\mathbb{Q}(\sqrt{-4849845}) = \mathbb{Q}(\sqrt{-3\cdot 5 \cdot 7 \cdot 11 \cdot 13 \cdot 17 \cdot 19}).$$ It is not true, however, that $\Gamma_K^{ur}$ has to be solvable (see for example \cite{Maire}).
 In \cite{Yamamura} Yakamura determines the unramified Galois group for all imaginary quadratic fields $K$ of class number $2$ and shows that in this situation $\Gamma_K^{ur}$ is finite. The results of \cite{Yamamura} are unconditional, except for the case $\mathbb{Q}(\sqrt{-427})$ where the generalized Riemann hypothesis is assumed. Yakamura uses discriminant bounds to determine $\Gamma_K^{ur}$ and as such, his methods are of a of quantitative nature. In contrast, the methods in this paper are of a qualitative nature, using algebraic invariants to obstruct the existence of certain groups as unramified Galois groups. This approach has the benefit that it can be applied to totally imaginary number fields of arbitrarily large discriminant.
\subsection{Organization} 
In the first part of Section ~\ref{section:embedding} we study embedding problems for general profinite groups and define a new homotopical obstruction. We then specialize to give two  infinite families of embedding problems which have no solutions. In Section \ref{sec:proppro} we use these obstructions to give examples of infinite families of groups which cannot be realized as unramified Galois group of certain families of totally imaginary number field $K.$ We also state the main results on the cup product structure of the étale cohomology ring $\displaystyle H^*(X,\ZZ/2\ZZ)$ where $X= \Spec \mathcal{O}_K$ is the ring of integers of $K,$ but defer the proofs to Section \ref{section:cohomologyring}. In \ref{section:lemmasonetale} we review the étale cohomology of totally imaginary number fields from Mazur \cite{MazurNotes} (see also \cite{MilneADT}) and Artin-Verdier duality. In this section, we give a description of $\Ext^2_X(\ZZ/n\ZZ,\mathbb{G}_{m,X})$ in terms of number-theoretic data which will be important to us in determining the ring structure of $\displaystyle H^\ast(X,\ZZ/2\ZZ).$ In Section ~\ref{section:cohomologyring} we give a full description of the ring $\displaystyle H^\ast(X,\ZZ/2\ZZ)$ using the results from Section ~\ref{section:lemmasonetale}.
\subsection*{Acknowledgements}
The authors wish to thank Lior Bary-Soroker, Tilman Bauer and Nathaniel Stapleton for their helpful comments.
\section{Higher obstructions for the solvability of embedding problems} \label{section:embedding}
\noindent  In this section we will study embedding problems and obstructions to their solutions. The first goal of this section is to produce non-trivial obstructions to the solution of embedding problems $$\begin{tikzcd}   &  \Gamma \arrow[d,"p"]\\  G \arrow[r,"f"] & H \end{tikzcd}$$ when $\ker f =P$ is perfect. Recall that we assume that $G$ and $H$ are finite groups so that $P$ is finite as well. In the second part of this section, we apply these methods to give examples of two infinite families of embedding problems with no solutions. In the first family, the embedding problems are of the form $$f:G \rightarrow \ZZ/2\ZZ, p:\Gamma \rightarrow \ZZ/2\ZZ$$ where the finite group $G$ satisfies a property we call $(\ast)_a$ and $\Gamma$ satisfies a property we call $(\ast \ast)_a,$  while the second family of embedding problems will be of the form $$f: G \rightarrow \ZZ/2\ZZ \times \ZZ/2\ZZ , p:\Gamma \rightarrow \ZZ/2\ZZ \times \ZZ/2\ZZ$$ where $G$ satisfies the property $(\ast)_b$ and $\Gamma$ the property $(\ast \ast)_b.$ 
 In the following section we produce two infinite family of profinite groups, where each $\Gamma$ in the first family satisfies Property $(\ast \ast)_a$ and for each member of the second family, $(\ast \ast)_b$ is satisfied. In fact, we show that in each situation, one can take $\Gamma$ to be equal to $\Gamma_K^{ur},$ the unramified Galois group of $K,$ for $K$ a totally imaginary number field satisfying certain conditions. We stress that the conditions $K$ must satisfy for $\Gamma_K^{ur}$ to satisfy property $(\ast)_a$  are in general different from the conditions $K$ must satisfy for $\Gamma_K^{ur}$ to have property $(\ast)_b.$ We apply this to show that for imaginary number fields $K$ such that $\Gamma_K^{ur}$ satisfies property $(\ast \ast)_a$ (resp. $(\ast \ast)_b$) groups $G$ satisfying property $(\ast)_a$ (respectively groups $G$ satisfying  $(\ast)_b)$  cannot be the Galois group of an unramified field extension $L/K.$ \\  

\noindent To define the obstructions to embedding problems, we start by studying a closely related problem. Suppose that
\begin{equation} \label{finiteexact} 1 \rightarrow P \rightarrow G \xrightarrow{f} H \rightarrow 1 \end{equation} is an exact sequence of finite groups. We will now produce obstructions to the existence of sections to $f: G \rightarrow H$ when the kernel possibly has no abelian quotients.
\begin{definition}
Let $A$ be a finitely generated abelian group and $P$ a finite group. We say that $P$ is $A$-perfect if $$P_{ab} \otimes_{\mathbb{Z}} A = 0.$$ 
\end{definition}
Clearly if a finite group $P$ is perfect, then $P$ is $A$-perfect for all $A$. If $A$ and $P$ are finite, then examples of $A$-perfect groups $P$ are the ones such that the order of $P_{ab}$ is relatively prime to the order of $A.$ Fix now an element $a \in A.$ Note that since $P$ is $A$-perfect and $A$ is finitely generated, we have by the universal coefficient theorem an isomorphism $H^2(P,H_2(P,A)) \cong \Hom(H_2(P,\ZZ),H_2(P,A)).$ Since $\ZZ$ is a free abelian group, there is a unique homomorphism $s_a:\ZZ \rightarrow A$ which takes $1$ to $a.$ There exists a unique central extension of $P$ by $H_2(P,A)$ which under the isomorphism from the universal coefficient theorem corresponds to the map $H_2(P,\ZZ) \xrightarrow{H_2(P,s_a)} H_2(P,A) .$  Call this extension $E_a$ the $a$-central extension. We now construct an element $$\tilde{o}_2^a \in H^3(P,H_2(P,A))$$  which will be an obstruction to the spltting of $f.$ For defining $\tilde{o}^a_2,$ we introduce the following definition.
\begin{definition}\cite{WhiteheadNote}
A crossed module is a triple $(G,H,d)$ where $G$ and $H$ are groups and $H$ acts on $G$ (we denote this action by $^h g$ for $g \in G$ and $h \in H)$ and $d:G \rightarrow H$ is a homomorphism such that $$d(^hg) =g d(h) g^{-1} ,(g \in G, h \in H)$$ and $$^{d(g)}g' = g g' g^{-1}, (g,g' \in  G).$$
\end{definition}
If $H$ acts on $G$ and we have a map $d:G \rightarrow H$ satisfying the conditions of the above definition, we will sometimes write that $d:G \rightarrow H$ is a crossed module. An example of a crossed module is a normal inclusion $P \xrightarrow{i} G,$ where $G$ acts on $P$ by conjugation. If we have such a normal inclusion, we also see that the action of $G$ on $P$ induces an action of $G$ on $H_2(P,A)$ for any abelian group $A.$ 
\begin{lemma}\label{crossedperfect}
Let $A$ be a finitely generated abelian group, $a \in A,$ and $$\begin{tikzcd} 1 \arrow[r] & P \arrow[r,"i"] & G \arrow[r,,"f"] & H \arrow[r] & 1 \end{tikzcd}$$  a short exact sequence of finite groups such that $P$ is $A$-perfect. Take $$u_a \in H^2(P,H_2(P,A))$$ to be the element classifying the $a$-central extension $$1 \rightarrow H_2(P,A) \rightarrow C \xrightarrow{p} P \rightarrow 1.$$ Then there is a unique action of $G$ on $C$ compatible with the action of $G$ on $P$  and $H_2(P,A)$ such that the map $$i \circ p =d: C \rightarrow G$$ is a crossed module.
\end{lemma}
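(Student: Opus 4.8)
The plan is to produce the $G$-action on $C$ as the unique lift of the conjugation action of $G$ on $P$, and to read off all of the stated properties from a uniqueness principle. Write $M=H_2(P,A)$, regarded as a $G$-module via the conjugation action of $G$ on $P$ through the normal inclusion $i$: for $g\in G$ let $c_g\in\Aut(P)$ denote conjugation by $g$, acting on $M$ by $H_2(c_g,A)$, and note that $i(P)$ acts trivially on $M$ since inner automorphisms induce the identity on homology. Two algebraic inputs drive the proof. First, $\Hom(P,M)=\Hom(P_{ab},M)=0$: this is clear if $P$ is perfect, and otherwise $P_{ab}$ is a nontrivial finite abelian group, so the hypothesis $P_{ab}\otimes A=0$ together with finite generation of $A$ forces $A$ to be finite with $|A|$ prime to $|P_{ab}|$; then by the universal coefficient theorem $M\cong H_2(P,\ZZ)\otimes A$ (the $\mathrm{Tor}$ summand $\mathrm{Tor}(P_{ab},A)$ vanishes by coprimality), and $H_2(P,\ZZ)$ being finite, every prime dividing $|M|$ divides $|A|$ and hence is prime to $|P_{ab}|$, so $\Hom(P_{ab},M)=0$; the same reasoning gives $\Ext(P_{ab},M)=0$, which underlies the universal coefficient isomorphism $H^2(P,M)\cong\Hom(H_2(P,\ZZ),M)$ already invoked in the statement. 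Second, $u_a$ is $G$-invariant: under that isomorphism it corresponds to $H_2(P,s_a)\colon H_2(P,\ZZ)\to H_2(P,A)$ where $s_a(1)=a$, and since $s_a$ is a change-of-coefficients map one has $H_2(\phi,A)\circ H_2(P,s_a)=H_2(P,s_a)\circ H_2(\phi,\ZZ)$ for every $\phi\in\Aut(P)$; feeding in $\phi=c_g$ (and its inverse) shows $g$ fixes $H_2(P,s_a)$, hence fixes $u_a$.

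Next I would build the action. For each $g\in G$, a group automorphism $\alpha_g$ of $C$ inducing $c_g$ on $P$ (i.e.\ $p\circ\alpha_g=c_g\circ p$) and $H_2(c_g,A)$ on $M$ is precisely a self-morphism of the central extension $1\to M\to C\xrightarrow{p}P\to 1$ lying over that pair; by the standard classification of group extensions such a morphism exists if and only if $c_g^{*}u_a=H_2(c_g,A)_{*}u_a$ in $H^2(P,M)$, which is exactly the $G$-invariance just established, and it is unique because any two choices differ by an element of $\Hom(P,M)=0$. (Concretely: choose a normalized set section $t$ of $p$ with cocycle $\sigma$ representing $u_a$, and set $\alpha_g=H_2(c_g,A)$ on $M$ and $\alpha_g(t(\pi))=t(c_g\pi)\mu_g(\pi)$; the homomorphism identity for $\alpha_g$ becomes $\delta\mu_g=H_2(c_g,A)_{*}\sigma-c_g^{*}\sigma$, solvable exactly by $G$-invariance, with $\mu_g$ unique modulo $Z^1(P,M)=\Hom(P,M)=0$.) Since $\alpha_g\alpha_h$ again lies over $(c_{gh},H_2(c_{gh},A))$ and $\alpha_1$ over $(\id,\id)$, uniqueness gives $\alpha_g\alpha_h=\alpha_{gh}$ and $\alpha_1=\id$, so $g\mapsto\alpha_g$ is a $G$-action, compatible by construction with the actions on $P$ and on $M$.

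Finally I would verify the crossed module axioms and the uniqueness clause. The axiom $d({}^g c)=g\,d(c)\,g^{-1}$ is immediate: $d(\alpha_g c)=i(p(\alpha_g c))=i(c_g(p(c)))=g\,i(p(c))\,g^{-1}$ by definition of $c_g$. For the second axiom, fix $c\in C$, put $\pi=p(c)$, and consider inner conjugation $\gamma_c\colon c'\mapsto cc'c^{-1}$ on $C$: it lies over $c_{i(\pi)}$ on $P$ and over $\id$ on $M$ (as $M$ is central in $C$), and $H_2(c_{i(\pi)},A)=\id_M$ because $c_{i(\pi)}$ is an inner automorphism of $P$; hence $\gamma_c$ lies over $(c_{i(\pi)},H_2(c_{i(\pi)},A))$, so $\gamma_c=\alpha_{i(\pi)}=\alpha_{d(c)}$ by uniqueness, which is the relation ${}^{d(c)}c'=cc'c^{-1}$. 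For uniqueness of the structure, any $G$-action on $C$ making $p$ and $M\hookrightarrow C$ equivariant has each of its elements lying over $(c_g,H_2(c_g,A))$, hence agreeing with $\alpha_g$. The main obstacle is the content of the first paragraph: extracting the vanishing of $\Hom(P_{ab},H_2(P,A))$ and $\Ext(P_{ab},H_2(P,A))$ from $A$-perfectness for finitely generated $A$ — this is what simultaneously legitimizes the universal coefficient description of $H^2$, makes the lifts $\alpha_g$ unique, and thereby reduces the crossed module axioms to formal consequences — together with the naturality bookkeeping that pins down the $G$-invariance of the particular class $u_a$; once these are in hand everything else is formal.
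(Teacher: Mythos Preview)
Your proof is correct and follows essentially the same route as the paper: the paper quotes a proposition of Stammbach (which packages exactly your existence/uniqueness criterion for morphisms of central extensions in terms of $\delta^E$, together with the hypotheses $\Hom(P_{ab},M)=\Ext^1(P_{ab},M)=0$), whereas you unwind that criterion at the cocycle level and supply the verification of the vanishing of $\Hom$ and $\Ext$ that the paper merely asserts. The naturality argument you give for the $G$-invariance of $u_a$ is exactly the commutativity of Stammbach's square specialized to $\delta^E=H_2(P,s_a)$, and your derivation of the two crossed-module identities and of uniqueness from the uniqueness of the lifts $\alpha_g$ matches the paper's argument.
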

We will call the crossed module $d: C \rightarrow G$ constructed above for the $(A,a)$-obstruction crossed module associated to $f.$ 
Before proving Lemma \ref{crossedperfect}, some preliminaries on central extensions are needed. If $$E: 1 \rightarrow N \xrightarrow{h} G \xrightarrow{g} P \rightarrow 1$$ is a central extension classified by an element $z_E \in H^2(P,N),$ we let $$\delta^E_*:H_2 (P,\ZZ) \rightarrow N$$ be the image of $z_E$ in $\Hom(H_2(P,\ZZ),N)$ under application of the map $$H^2(P,N) \rightarrow \Hom(H_2(P,\ZZ),N)$$ coming from the universal coefficient theorem. 
\begin{prop}[{\cite[V.6 Prop. ~6.1]{StammbachHomology}}]\label{prop:stammbach}
Suppose that $$E: 1 \rightarrow N \xrightarrow{h} G \xrightarrow{g} P \rightarrow 1$$ and $$E': 1 \rightarrow N' \xrightarrow{h'} G' \xrightarrow{g'} P' \rightarrow 1$$ are two central extensions. Let $r:N' \rightarrow N$ and $s:P \rightarrow P'$ be group homomorphisms and suppose that $\Ext^1(P_{ab},N')=0.$  Then there exists $t:G \rightarrow G'$ inducing $r,s$ if and only if $$\begin{tikzcd} H_2(P,\ZZ) \arrow[r,"\delta^E"] \arrow[d,"s"] & N \arrow[d,"r"] \\ H_2(P',\ZZ) \arrow[r,"\delta^{E'}"] & N' \end{tikzcd}$$ is commutative. If $t$ exists, then it is unique if and only if $\Hom(P_{ab},N')=0.$ 
\end{prop}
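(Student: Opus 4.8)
The plan is to make the situation completely explicit by choosing set-theoretic sections, reduce the existence of $t$ to an equality of classes in $H^2(P,N')$, and then push that equality through the universal coefficient sequence to obtain the diagrammatic criterion. Throughout, $r$ denotes the homomorphism $N\to N'$ that is the right-hand vertical arrow of the displayed square (so that the square typechecks), and by ``$t$ induces $r,s$'' I mean $t\circ h=h'\circ r$ and $g'\circ t=s\circ g$.

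First I would fix normalized set-theoretic sections $\sigma\colon P\to G$ of $g$ and $\sigma'\colon P'\to G'$ of $g'$ with $\sigma(1)=1$ and $\sigma'(1)=1$. Since $E$ and $E'$ are central, these yield $2$-cocycles $f_E\in Z^2(P,N)$ and $f_{E'}\in Z^2(P',N')$ for the trivial actions, with $[f_E]=z_E$ and $[f_{E'}]=z_{E'}$. Writing every element of $G$ uniquely as $h(n)\sigma(x)$, I claim that giving a homomorphism $t\colon G\to G'$ inducing $r,s$ is the same as giving a function $\phi\colon P\to N'$ with $\phi(1)=0$ such that
\[
s^{\ast}f_{E'}-r_{\ast}f_E=d\phi\quad\text{in }C^2(P,N'),
\]
where $d$ is the coboundary; the bijection is $t\bigl(h(n)\sigma(x)\bigr)=h'\bigl(r(n)+\phi(x)\bigr)\,\sigma'(s(x))$. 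The verification is a direct computation in which centrality of $N'$ in $G'$ is used to move the $h'$-terms past the $\sigma'$-terms, and I expect this bookkeeping (cocycle identity for $f_E,f_{E'}$, the normalization $\phi(1)=0$, sign conventions) to be the most calculation-heavy, though entirely routine, step.

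Granting the claim, such a $t$ exists if and only if $s^{\ast}f_{E'}-r_{\ast}f_E$ is a coboundary, i.e.\ if and only if $r_{\ast}z_E=s^{\ast}z_{E'}$ in $H^2(P,N')$. Moreover, for fixed existence, two admissible $\phi$ differ by an element of $Z^1(P,N')=\Hom(P_{ab},N')$ (trivial action), and conversely each element of $\Hom(P_{ab},N')$ produces a new admissible $t$; hence the admissible $t$ form a torsor under $\Hom(P_{ab},N')$, which is precisely the uniqueness clause.

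It remains to match $r_{\ast}z_E=s^{\ast}z_{E'}$ with commutativity of the square. The universal coefficient theorem, applied to the standard free-abelian complex computing $H_{\ast}(-,\ZZ)$, gives for every trivial coefficient module $M$ a short exact sequence
\[
0\to\Ext^1(P_{ab},M)\to H^2(P,M)\xrightarrow{\ \rho_M\ }\Hom(H_2(P,\ZZ),M)\to 0,
\]
natural in $M$ and in $P$, with $\delta^E=\rho_N(z_E)$ and $\delta^{E'}=\rho_{N'}(z_{E'})$ (the latter for the group $P'$). Naturality in the coefficients gives $\rho_{N'}(r_{\ast}z_E)=r\circ\delta^E$, and naturality in the group gives $\rho_{N'}(s^{\ast}z_{E'})=\delta^{E'}\circ H_2(s)$; so the displayed square commutes exactly when $\rho_{N'}(r_{\ast}z_E)=\rho_{N'}(s^{\ast}z_{E'})$. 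Since $\Ext^1(P_{ab},N')=0$ by hypothesis, $\rho_{N'}$ is injective, so this equality of images is equivalent to $r_{\ast}z_E=s^{\ast}z_{E'}$, completing the equivalence. The hypothesis $\Ext^1(P_{ab},N')=0$ is used only here, to make $\rho_{N'}$ injective, whereas the uniqueness statement requires nothing beyond $\Hom(P_{ab},N')=0$.
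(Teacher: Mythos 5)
The paper offers no proof of this proposition at all: it is quoted verbatim from Hilton--Stammbach \cite[V.6, Prop.~6.1]{StammbachHomology}, so there is nothing internal to compare your argument against. Your proof is correct and is essentially the standard one behind the cited result: writing both central extensions via normalized sections as cocycles $f_E,f_{E'}$ for the trivial actions, identifying homomorphisms $t$ inducing $r,s$ with functions $\phi\colon P\to N'$ satisfying $s^{\ast}f_{E'}-r_{\ast}f_E=d\phi$ (so existence is the equality $r_{\ast}z_E=s^{\ast}z_{E'}$ in $H^2(P,N')$ and the admissible $t$ form a torsor under $Z^1(P,N')=\Hom(P_{ab},N')$, giving the uniqueness clause), and then translating that equality into commutativity of the square via the naturality, in both the group and the coefficients, of the universal coefficient sequence, whose $\Ext^1(P_{ab},N')$-term vanishes by hypothesis so that the comparison map to $\Hom(H_2(P,\ZZ),N')$ is injective. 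You also correctly note and repair the typo in the statement (the paper writes $r\colon N'\to N$ while the square requires $r\colon N\to N'$), and your observation that the $\Ext^1$ hypothesis is needed only for the implication from commutativity of the square to the existence of $t$ is accurate.
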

\begin{proof}[Proof of Lemma \ref{crossedperfect}]
Since  $$E:1 \rightarrow H_2(P,A) \rightarrow C \xrightarrow{p} P \rightarrow 1$$ is a central extension, the map $C \xrightarrow{p} P$ can be given a canonical crossed module structure. On the other hand, since $P$ is normal in $G,$ there is a crossed module structure on the map $P \xrightarrow{i} G.$ We want to show that there is unique  crossed module structure on the map $d= ip:C \rightarrow G$ compatible with the actions of $G$ on $P$ and $H_2(P,A).$  Let us start by defining an action of $G$ on $C.$  If $g \in G$ write $$g: P \rightarrow P$$ for the automorphism $g$ induces on $P$ and $$g:H_2(P,A) \rightarrow H_2(P,A)$$ for the action of $g$ on the coefficient group $H_2(P,A).$ We then have a diagram $$\begin{tikzcd} 1 \arrow[r] & H_2(P,A) \arrow[r,"i"] \arrow[d,"g"] & C \arrow[r,"p"] \arrow[d,dotted,"?"description] & P \arrow[r] \arrow[d,"g"] & 1 \\  1 \arrow[r] & H_2(P,A) \arrow[r,"i"] & C \arrow[r,"p"] & P \arrow[r] & 1.\end{tikzcd}$$ We now claim that there is a unique fill-in of the dotted arrow to a map $g: C \rightarrow C.$ We note that since $P$ is $A$-perfect and $A$ is finitely generated, this implies that $$\Hom(P_{ab},H_2(P,A)) = \Ext^1(P_{ab},H_2(P,A)) = 0. $$ The existence and uniqueness of the fill-in now follows from Proposition \ref{prop:stammbach} since 
$\delta_* ^E:H_2(P,\ZZ) \rightarrow H_2(P,A)$ is induced by the map $\ZZ \rightarrow A$ taking $1$ to $a.$  We thus get a map $G \rightarrow \Aut(C)$ which is a group homomorphism by uniqueness of the fill-ins.  If $g \in G$ and $c \in C,$ we write $^g c$ for the element we get after acting by $g$ on $c.$ To see that this action makes the map $d: C \rightarrow G$ into a crossed module, it must be verified that $d(^gc) = g  \cdot d(c) \cdot g^{-1}$ and that $^{d(c)}c' = c \cdot c' \cdot c^{-1}.$ For the first property, note that $$d(^gc) = i(^gp(c)) = g \cdot d(c) \cdot g^{-1}$$ by the crossed module structure of $i: P \rightarrow G.$ To show that $$^{d(c)}c' = c c' c^{-1}$$ we once again use Proposition \ref{prop:stammbach} and the fact that conjugation by an element $p \in P$ induces the identity homomorphism in group homology. Thus, there is a unique crossed module structure on the map $d: C \rightarrow G$ compatible with the actions of $G$ on $P$ and $H_2(P,A).$  
\end{proof}
The crossed module $d: C \rightarrow G$ from Lemma \ref{crossedperfect} gives rise to the exact sequence $$1 \rightarrow H_2(P,A) \rightarrow C \xrightarrow{d} G \xrightarrow{f} H \rightarrow 1.$$ By \cite[IV.6]{BrownCohomology} this shows that the crossed module $d: G \rightarrow C$ is classified by an element $\tilde{o}_2 \in H^3(H,H_2(P,A)).$ One shows immediately that if there is a section of the map $f: G \rightarrow H,$ then $\tilde{o}_2 =0.$ We have thus proved the following proposition.
\begin{prop}\label{prop:obstructioncrossed}
Suppose that $f:G \rightarrow H$ is a surjective homomorphism of finite groups with kernel $P,$ and that $A$ is an abelian group together with a fixed element $a \in A.$ Let $$\tilde{o}^a_2 \in H^3(H,H_2(P,A))$$ be the element classifying the $(A,a)$-obstruction crossed module  $d: C \rightarrow P$ given by Lemma \ref{crossedperfect}. Then if $\tilde{o}_2^a  \neq 0,$ there is no section of $f: G \rightarrow H.$
\end{prop}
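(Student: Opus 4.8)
The plan is to package the crossed module $d\colon C\to G$ of Lemma \ref{crossedperfect} as a four-term exact sequence and extract its classifying class in $H^3$, then argue that a section forces this class to vanish. First I would recall from Lemma \ref{crossedperfect} that we have a crossed module $d\colon C\to G$ with $\ker d = H_2(P,A)$ (the kernel of $p\colon C\to P$, which maps isomorphically since $i$ is injective) and $\operatorname{coker} d = H$ (because $\operatorname{im} d = \operatorname{im} i = P = \ker f$). Thus we obtain a four-term exact sequence
$$1\longrightarrow H_2(P,A)\longrightarrow C\xrightarrow{\ d\ } G\xrightarrow{\ f\ } H\longrightarrow 1,$$
in which $H_2(P,A)$ is a $\pi_0$-module, i.e.\ a $G/\operatorname{im} d = H$-module; here the $H$-action is the one induced from the $G$-action on $H_2(P,A)$ used in Lemma \ref{crossedperfect} (this action descends because inner automorphisms of $P$ act trivially on $H_2(P,-)$, which was already used in the proof of that lemma). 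By the classification of crossed modules in terms of cohomology — specifically \cite[IV.6]{BrownCohomology}, which identifies equivalence classes of such four-term sequences (equivalently, of crossed modules with prescribed $\ker$ and $\operatorname{coker}$) with $H^3(H, H_2(P,A))$ — the crossed module $d$ is classified by a well-defined element $\tilde o_2^a\in H^3(H,H_2(P,A))$. This defines the obstruction.

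Next I would show that if $f\colon G\to H$ admits a section $s\colon H\to G$, then $\tilde o_2^a = 0$. The idea is that a splitting of $f$ lets us replace the crossed module $d\colon C\to G$ by one in which the cokernel map is split, and a split crossed module represents the zero class in $H^3$. Concretely, pulling back the sequence along $s$ — or rather, restricting attention to the subgroup $s(H)\le G$ and the induced crossed module over it — trivializes the extension data: the class $\tilde o_2^a$ is, by the cocycle description in \cite[IV.6]{BrownCohomology}, obtained by choosing a set-theoretic section of $d$ and a set-theoretic section of $f$ and measuring the failure of associativity; when $f$ has a genuine homomorphic section we may choose the latter to be a homomorphism, and the resulting $3$-cocycle becomes a coboundary. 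The cleanest way to phrase this is: the trivial crossed module with kernel $H_2(P,A)$ and cokernel $H$ is $H_2(P,A)\rtimes H \to H\times H_2(P,A)\to\cdots$; a section $s$ of $f$ produces a morphism of crossed modules from this trivial one to $d\colon C\to G$ inducing the identity on kernel and cokernel, hence the two classify the same element of $H^3$, which is $0$. Taking the contrapositive gives: if $\tilde o_2^a\neq 0$ then $f$ has no section, which is the assertion.

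The main obstacle is making the second step precise, namely the claim that a homomorphic section of $f$ forces the classifying $3$-class of the crossed module to vanish. This requires unwinding the identification in \cite[IV.6]{BrownCohomology} carefully enough to see that the relevant $3$-cocycle — built from the $2$-cocycle $u_a$ classifying the $a$-central extension $C\to P$ together with the extension data of $1\to P\to G\to H\to 1$ — is killed precisely by a homomorphic splitting of the outer extension. One should note that the section $s$ need not be compatible with the central extension $C\to P$ in any way; what one uses is only that $s$ provides a consistent set of coset representatives for $P$ in $G$ indexed by $H$, eliminating the "associator" contribution of the $H\times H\to P$ factor set. Beyond this, the verification that the $G$-action on $C$ from Lemma \ref{crossedperfect} indeed descends to the $H$-action on $H_2(P,A)$ needed for the sequence to be a crossed-module-with-$\pi_0$-module in the sense of \cite[IV.6]{BrownCohomology} is routine given the uniqueness statement in Proposition \ref{prop:stammbach}, and I would state it as such rather than belabor it.
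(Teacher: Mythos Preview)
Your proposal is correct and follows exactly the paper's approach: form the four-term exact sequence $1\to H_2(P,A)\to C\xrightarrow{d} G\xrightarrow{f} H\to 1$, invoke \cite[IV.6]{BrownCohomology} to obtain the classifying element $\tilde o_2^a\in H^3(H,H_2(P,A))$, and observe that a section of $f$ forces this class to vanish. The paper dispatches the last step with the single phrase ``one shows immediately,'' whereas you sketch the cocycle/morphism-of-crossed-modules justification; this extra detail is fine and does not diverge from the intended argument.
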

We now finally apply this to embedding problems. Let $\Gamma$ be a profinite group and suppose we have an embedding problem $$\begin{tikzcd} & & &  \Gamma \arrow[d,"p"] \arrow[dl,dotted,"?"description] \\ 1 \arrow[r] & \ker f \arrow[r] & G \arrow[r,"f"] & H \arrow[r] & 1 \end{tikzcd}$$ where $G$ and $H$ are finite. By taking pullbacks, we get the diagram$$\begin{tikzcd} 1 \arrow[r] & P \arrow[r] \arrow[d, equals] & \Gamma \times_{H} G \arrow[r] \arrow[d] &  \Gamma \arrow[d,"p"] \arrow[dl,dotted,"?"description] \arrow[r] & 1 \\ 1 \arrow[r] & P \arrow[r] & G \arrow[r,"f"] & H \arrow[r] & 1. \end{tikzcd}$$  Let us note that the existence of a morphism filling in the dotted arrow is equivalent to the existence of a section of the map $$\Gamma \times_{H} G \rightarrow \Gamma.$$
\begin{corollary}\label{cor:obstructionprofinite}
Let $\Gamma$ be a profinite group and suppose we have an embedding problem $$\begin{tikzcd} & & &  \Gamma \arrow[d,"p"] \arrow[dl,dotted,"?"description] \\ 1 \arrow[r] & \ker f = P \arrow[r] & G \arrow[r,"f"] & H \arrow[r] & 1 \end{tikzcd}$$ where $G$ and $H$ are finite. Let $A$ be an abelian group together with a fixed element $a$ and suppose that $P$ is $A$-perfect and denote by $$ o_2^a = p^*(\tilde{o}^a_2) \in H^3(\Gamma,H_2(P,A))$$ the pullback by $p$ of the element $\tilde{o}^a_2 \in H^3(H,H_2(P,A))$ from Proposition \ref{prop:obstructioncrossed}. Then if $o_2^a \neq 0,$ there are no solutions to the embedding problem.
\end{corollary}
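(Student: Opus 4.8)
The plan is to reduce the statement to Proposition \ref{prop:obstructioncrossed}. The first observation is that if the embedding problem has a solution — proper, or even merely weak — then there is a continuous homomorphism $q\colon\Gamma\to G$ with $fq=p$. Hence
$$o_2^a \;=\; p^*(\tilde{o}_2^a)\;=\;(fq)^*(\tilde{o}_2^a)\;=\;q^*\bigl(f^*(\tilde{o}_2^a)\bigr),$$
where $f^*\colon H^3(H,H_2(P,A))\to H^3(G,H_2(P,A))$ is pullback along $f$, the $G$-module structure on $H_2(P,A)$ being obtained by inflating along $f$ the $H=G/P$-module structure (inner automorphisms act trivially on homology, so the conjugation action of $G$ on $P$ descends to $H$). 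So it suffices to prove the purely finite-group statement $f^*(\tilde{o}_2^a)=0$.

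For this I would pull $f$ back along itself and consider
$$1 \longrightarrow P \longrightarrow G\times_H G \xrightarrow{\ \mathrm{pr}_2\ } G \longrightarrow 1,$$
with $\mathrm{pr}_2$ the second projection, whose kernel is again (isomorphic to) the $A$-perfect group $P$. Proposition \ref{prop:obstructioncrossed} then yields an obstruction class $\tilde{o}_2^a(\mathrm{pr}_2)\in H^3(G,H_2(P,A))$. Unwinding Lemma \ref{crossedperfect}, the $a$-central extension of $P$ and its $G\times_H G$-action (which factors through the first projection) are exactly those entering the construction for $f$, so the four-term exact sequence $1\to H_2(P,A)\to C\to G\times_H G\to G\to 1$ attached to $\mathrm{pr}_2$ is precisely the pullback along $f\colon G\to H$ of the four-term sequence $1\to H_2(P,A)\to C\to G\to H\to 1$ attached to $f$. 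By functoriality of the classification of \cite[IV.6]{BrownCohomology} this identifies $\tilde{o}_2^a(\mathrm{pr}_2)$ with $f^*(\tilde{o}_2^a)$. But $\mathrm{pr}_2$ has an evident section, the diagonal $g\mapsto(g,g)$, so Proposition \ref{prop:obstructioncrossed} forces $\tilde{o}_2^a(\mathrm{pr}_2)=0$. Combining, $f^*(\tilde{o}_2^a)=0$, whence $o_2^a=q^*(0)=0$; contrapositively, $o_2^a\neq0$ rules out even a weak solution, and a fortiori a proper one.

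The step that will require genuine care, rather than bookkeeping, is the identification $\tilde{o}_2^a(\mathrm{pr}_2)=f^*(\tilde{o}_2^a)$: one must check that the crossed module produced by Lemma \ref{crossedperfect} depends functorially on morphisms of short exact sequences which are the identity on the kernel $P$ (here the morphism $(\mathrm{id}_P,\mathrm{pr}_1,f)$ onto $1\to P\to G\to H\to 1$), and that the $H^3$-invariant of \cite[IV.6]{BrownCohomology} transforms by pullback under such morphisms. Both follow by inspecting the short proofs involved and using their uniqueness clauses, and I expect this to be the only real content of the argument. An alternative that avoids $G\times_H G$ altogether is to apply Lemma \ref{crossedperfect} directly to the sequence $1\to P\to \Gamma\times_H G\to\Gamma\to 1$ coming from the pullback square preceding the statement — legitimate since only $P$ needs to be finite and $A$-perfect — obtaining a crossed module over $\Gamma\times_H G$ whose associated $H^3$-class is $o_2^a$, and then re-running the section-splitting argument of Proposition \ref{prop:obstructioncrossed} for the section $\Gamma\to\Gamma\times_H G$ that any solution provides.
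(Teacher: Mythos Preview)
Your argument is correct and takes a genuinely different route from the paper's. The paper pulls the four-term crossed-module sequence back along $p\colon\Gamma\to H$ to obtain a \emph{profinite} crossed module
\[
1 \to H_2(P,A) \to \Gamma\times_H C \to \Gamma\times_H G \to \Gamma \to 1,
\]
invokes the classification of profinite crossed modules (citing Porter) to identify its class with $o_2^a=p^*(\tilde{o}_2^a)$, and then observes that a section of $\Gamma\times_H G\to\Gamma$ trivializes it. Your ``alternative'' at the end is exactly this argument.

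Your main argument is sharper: you factor $p^*=q^*f^*$ and prove the purely finite-group statement $f^*(\tilde{o}_2^a)=0$ in $H^3(G,H_2(P,A))$, via the diagonal section of $\mathrm{pr}_2\colon G\times_H G\to G$. This stays entirely within the finite setting already established in Lemma~\ref{crossedperfect} and Proposition~\ref{prop:obstructioncrossed}, avoids the profinite crossed-module machinery, and yields the slightly stronger fact that $\tilde{o}_2^a$ always dies under $f^*$ --- independent of $\Gamma$ and $p$. The identification $\tilde{o}_2^a(\mathrm{pr}_2)=f^*(\tilde{o}_2^a)$ that you flag as the point requiring care is indeed the crux; your justification (that the kernel, the $a$-central extension $C$, and the $G\times_H G$-action on $C$ via $\mathrm{pr}_1$ all agree with those in the pullback crossed module, by the uniqueness clause of Lemma~\ref{crossedperfect}) is correct. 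The paper's approach is more direct in that it needs no such identification, at the cost of importing the profinite theory.
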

\begin{proof}
By the above discussion, it is enough to show that if there is a section of the map $$\Gamma \times_H G \rightarrow \Gamma$$  then $o_2=0.$ We have the diagram $$\begin{tikzcd}[column sep = small] & & & & \Gamma \arrow[d,"p"] \\ 1 \arrow[r] & H_2(P,A) \arrow[r]&  C \arrow[r] & G \arrow[r] & H \arrow[r] & 1 \end{tikzcd}$$ which we can pullback by $p$ to get a profinite crossed module$$1 \rightarrow H_2(P,A) \rightarrow \Gamma \times_{H} C \rightarrow \Gamma \times_{H} G \rightarrow \Gamma \rightarrow 1.$$ This profinite crossed module is classified by an element $o_2^a \in H^3(\Gamma,H_2(P,A))$ (see for example \cite[pg. 168]{PorterCrossed}) which is the pullback by $p$ of the element $$\tilde{o}^a_2 \in H^3(H,H_2(P,A)).$$ If there is a section of the map $\Gamma \times_H G \rightarrow \Gamma$, this element $o_2^a$ is clearly trivial, so our proposition follows.
\end{proof}
We now restrict Proposition \ref{prop:obstructioncrossed} to when $A = \ZZ/2\ZZ$ and $a=1.$ Instead of writing the $(\ZZ/2\ZZ,1)$-obstruction in what follows, we simply write the $\ZZ/2\ZZ$-obstruction, leaving the $1$ implicit. The followng two propositions describe two cases where the $\ZZ/2\ZZ$-obstruction is non-zero. 
\begin{prop} \label{obstructionprop}
Let $$p: \Gamma \rightarrow \ZZ/2\ZZ, f:G \rightarrow \ZZ/2\ZZ$$ be an embedding problem such that $P = \ker f$ is $\ZZ/2\ZZ$-perfect and let $$\tilde{o}_2 \in H^3(\ZZ/2\ZZ,H_2(P,\ZZ/2\ZZ))$$ be the obstruction to the existence of a section of $f$ given by \ref{prop:obstructioncrossed} and let $x \in H^1(\Gamma,\ZZ/2\ZZ)$ be the class that classifies the map $$p: \Gamma \rightarrow \ZZ/2\ZZ.$$ Then if $$x \cup x \cup x \in H^3(\Gamma,\ZZ/2\ZZ)$$ and $\tilde{o}_2$ are non-zero, there are no solutions to the embedding problem.
\end{prop}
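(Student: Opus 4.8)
The plan is to reduce the statement to Corollary~\ref{cor:obstructionprofinite} and then to a computation in the cohomology of $\ZZ/2\ZZ$. Applying that corollary with $A=\ZZ/2\ZZ$ and $a=1$, it is enough to prove that the pulled-back obstruction
\[
o_2 \;=\; p^\ast(\tilde o_2)\;\in\; H^3(\Gamma, M), \qquad M := H_2(P,\ZZ/2\ZZ),
\]
is non-zero, where $\Gamma$ acts on $M$ through $p\colon \Gamma\to\ZZ/2\ZZ$ and the action of $\ZZ/2\ZZ=H$ coming from the conjugation action of $G$ on $P$ (which descends from $G$ to $H$ since inner automorphisms act trivially on homology). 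Let $t\in H^1(\ZZ/2\ZZ,\ZZ/2\ZZ)=\Hom(\ZZ/2\ZZ,\ZZ/2\ZZ)$ be the canonical generator, so that $H^\ast(\ZZ/2\ZZ,\ZZ/2\ZZ)=\ZZ/2\ZZ[t]$. Since $x$ classifies $p$ we have $p^\ast(t)=x$, and therefore $p^\ast(t^3)=x\cup x\cup x$, which is non-zero by hypothesis.

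The point that requires care is that the coefficient module $M$ carries a possibly non-trivial $\ZZ/2\ZZ$-action, so I would first analyse it as a module over the group algebra $R:=\ZZ/2\ZZ[\ZZ/2\ZZ]$. In characteristic two $R\cong \ZZ/2\ZZ[\varepsilon]/(\varepsilon^2)$, so by Jordan normal form (equivalently Krull--Schmidt) every finite-dimensional $R$-module is a direct sum of copies of the trivial module $\ZZ/2\ZZ$ and of the free module $R$; write $M\cong V\oplus F$ accordingly, with $V$ a trivial $\ZZ/2\ZZ$-module. A free module over a finite group ring is cohomologically trivial --- indeed $H^n(\ZZ/2\ZZ,R)=H^n(1,\ZZ/2\ZZ)=0$ for $n\ge 1$ by Shapiro's lemma --- so the projection $M\twoheadrightarrow V$ induces an isomorphism
\[
H^3(\ZZ/2\ZZ,M)\;\xrightarrow{\ \sim\ }\;H^3(\ZZ/2\ZZ,V)\;=\;H^3(\ZZ/2\ZZ,\ZZ/2\ZZ)\otimes_{\ZZ/2\ZZ} V\;=\;\ZZ/2\ZZ\,t^3\otimes_{\ZZ/2\ZZ} V .
\]
Since $\tilde o_2\ne 0$ by hypothesis, its image under this isomorphism is $t^3\otimes v_0$ for some $v_0\in V\setminus\{0\}$.

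I would then choose a $\ZZ/2\ZZ$-linear functional $\lambda_0\colon V\to\ZZ/2\ZZ$ with $\lambda_0(v_0)=1$ and extend it by zero on $F$ to an $R$-module homomorphism $\lambda\colon M\to\ZZ/2\ZZ$ into the trivial module. By construction $\lambda_\ast(\tilde o_2)=t^3$. Finally, by naturality of the pullback $p^\ast$ in the coefficient module,
\[
\lambda_\ast(o_2)\;=\;\lambda_\ast\!\left(p^\ast(\tilde o_2)\right)\;=\;p^\ast\!\left(\lambda_\ast(\tilde o_2)\right)\;=\;p^\ast(t^3)\;=\;x\cup x\cup x\;\ne\;0 ,
\]
so $o_2\ne 0$, and Corollary~\ref{cor:obstructionprofinite} gives that the embedding problem has no solution.

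The main obstacle is the middle paragraph: one has to know that $H^{\ge 1}(\ZZ/2\ZZ,-)$ applied to $H_2(P,\ZZ/2\ZZ)$ sees only the trivial-module part and is killed on free summands, which is precisely what allows a coefficient class in a module with non-trivial action to be detected by a map to the trivial module $\ZZ/2\ZZ$. Granting this, everything else is naturality of pullback together with the ring structure $H^\ast(\ZZ/2\ZZ,\ZZ/2\ZZ)=\ZZ/2\ZZ[t]$ and the identity $p^\ast(t)=x$.
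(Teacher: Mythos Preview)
Your argument is correct and follows the same overall strategy as the paper: reduce via Corollary~\ref{cor:obstructionprofinite} to showing $o_2=p^\ast(\tilde o_2)\ne 0$, produce a $\ZZ/2\ZZ$-equivariant map $\lambda\colon H_2(P,\ZZ/2\ZZ)\to\ZZ/2\ZZ$ detecting $\tilde o_2$, and then use naturality of $p^\ast$ together with $H^\ast(\ZZ/2\ZZ,\ZZ/2\ZZ)=\ZZ/2\ZZ[t]$ and $p^\ast(t)=x$.

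The one genuine difference is how the detection map is found. The paper isolates this as a separate lemma (Lemma~\ref{lemma:injectivereduce}): for any $\ZZ/2\ZZ$-module $M$ and odd $i$, the natural map $H^i(\ZZ/2\ZZ,M)\to H^i(\ZZ/2\ZZ,M_{\ZZ/2\ZZ})$ to cohomology with coinvariant coefficients is injective, proved by a short direct argument with crossed homomorphisms. You instead use the structure of finitely generated $\ZZ/2\ZZ[\ZZ/2\ZZ]\cong\ZZ/2\ZZ[\varepsilon]/(\varepsilon^2)$-modules to split $M\cong V\oplus F$ with $V$ trivial and $F$ free, and kill the free part via Shapiro. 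Your route is arguably cleaner here and in fact yields the stronger statement that $H^i(\ZZ/2\ZZ,M)\cong H^i(\ZZ/2\ZZ,V)$ for all $i\ge 1$, not just odd $i$; the paper's route has the mild advantage of working verbatim for an arbitrary (not necessarily finite) $\ZZ/2\ZZ$-module, though that generality is not needed since $P$ is finite. Either way, once the detection map exists, the remaining naturality square and the identification $\lambda_\ast(\tilde o_2)=t^3$ are exactly as in the paper.
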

\begin{prop} \label{obstructionprop2}
Let $$p: \Gamma \rightarrow \ZZ/2\ZZ \times \ZZ/2\ZZ, f:G \rightarrow \ZZ/2\ZZ \times \ZZ/2\ZZ $$ be an embedding problem such that $P = \ker f$ is $\ZZ/2\ZZ$-perfect, and such that $H_2(P,\ZZ/2\ZZ) = \ZZ/2\ZZ.$  Let $$\tilde{o}_2 \in H^3(\ZZ/2\ZZ \times \ZZ/2\ZZ,H_2(P,\ZZ/2\ZZ)) \cong H^3(\ZZ/2\ZZ \times \ZZ/2\ZZ, \ZZ/2\ZZ) $$ be the obstruction to the existence of a section of $f$ given by Proposition \ref{prop:obstructioncrossed}. Suppose that $\tilde{o}_2 = a^2 \cup b$ for $a,b \in H^1(\ZZ/2\ZZ \times \ZZ/2\ZZ, \ZZ/2\ZZ)$ distinct and non-zero and let $x_1,x_2 \in H^1(\Gamma,\ZZ/2\ZZ)$ be the pullbacks  of $a$ and $b$ respectively by $p.$ Then if $$x_1^2  \cup x_2 \in H^3(\Gamma,\ZZ/2\ZZ)$$ is non-zero, there are no solutions to the embedding problem.
\end{prop}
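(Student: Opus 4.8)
The plan is to deduce the statement essentially formally from Corollary \ref{cor:obstructionprofinite}, by tracking the obstruction class through the ring homomorphism $p^{\ast}$ on cohomology.

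First I would apply Corollary \ref{cor:obstructionprofinite} with $A = \ZZ/2\ZZ$ and $a = 1$. Since $P = \ker f$ is $\ZZ/2\ZZ$-perfect by hypothesis, the corollary produces the class $o_2 = p^{\ast}(\tilde{o}_2) \in H^3(\Gamma, H_2(P,\ZZ/2\ZZ))$ and asserts that the embedding problem has no solution whenever $o_2 \neq 0$. So it suffices to identify $o_2$ with $x_1^2 \cup x_2$; then the hypothesis $x_1^2 \cup x_2 \neq 0$ immediately forces $o_2 \neq 0$ and we are done.

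Before doing that, I would make the coefficient identification precise. The group $H = \ZZ/2\ZZ \times \ZZ/2\ZZ$ acts on $P$ through $G$, hence on $H_2(P,\ZZ/2\ZZ) \cong \ZZ/2\ZZ$; but $\Aut(\ZZ/2\ZZ)$ is trivial, so this action is trivial and the given isomorphism $H_2(P,\ZZ/2\ZZ) \cong \ZZ/2\ZZ$ is one of trivial $H$-modules. It therefore induces an isomorphism $H^3(H,H_2(P,\ZZ/2\ZZ)) \cong H^3(H,\ZZ/2\ZZ)$ that is compatible with pullback along $p$, so we may legitimately regard $\tilde{o}_2$ as an element of $H^3(H,\ZZ/2\ZZ)$, regard $o_2$ as an element of $H^3(\Gamma,\ZZ/2\ZZ)$, and make sense of the hypothesis $\tilde{o}_2 = a^2 \cup b$. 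Now I would invoke naturality of the cup product: $p^{\ast}\colon H^{\ast}(H,\ZZ/2\ZZ) \to H^{\ast}(\Gamma,\ZZ/2\ZZ)$ is a homomorphism of graded rings, whence
\[
o_2 = p^{\ast}(\tilde{o}_2) = p^{\ast}(a^2 \cup b) = p^{\ast}(a)^2 \cup p^{\ast}(b) = x_1^2 \cup x_2,
\]
by the definitions $x_1 = p^{\ast}(a)$, $x_2 = p^{\ast}(b)$. Hence if $x_1^2 \cup x_2 \in H^3(\Gamma,\ZZ/2\ZZ)$ is nonzero, then $o_2 \neq 0$, and Corollary \ref{cor:obstructionprofinite} gives that the embedding problem has no solution.

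I do not expect any real obstacle in this argument: it is formal once Corollary \ref{cor:obstructionprofinite} and Proposition \ref{prop:obstructioncrossed} are available, the only points needing a word of care being (i) that the $H$-action on the coefficient module $H_2(P,\ZZ/2\ZZ) = \ZZ/2\ZZ$ is necessarily trivial, and (ii) that $p^{\ast}$ respects cup products, both of which are standard. The genuinely substantial content lies elsewhere: in the applications one must verify the hypothesis $\tilde{o}_2 = a^2 \cup b$, i.e. compute the class of the $(\ZZ/2\ZZ,1)$-obstruction crossed module inside the explicit ring $H^{\ast}(\ZZ/2\ZZ \times \ZZ/2\ZZ,\ZZ/2\ZZ) \cong \ZZ/2\ZZ[a,b]$, and one must decide whether $x_1^2 \cup x_2$ vanishes in $H^{\ast}(\Gamma,\ZZ/2\ZZ)$ — which for $\Gamma = \Gamma_K^{ur}$ is exactly the étale cohomology ring computation carried out in the later sections.
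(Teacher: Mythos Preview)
Your proposal is correct and follows essentially the same route as the paper: apply Corollary~\ref{cor:obstructionprofinite} to reduce to showing $o_2 = p^*(\tilde{o}_2) \neq 0$, then use that $p^*$ is a ring homomorphism to identify $p^*(a^2 \cup b) = x_1^2 \cup x_2$. If anything, you are slightly more careful than the paper in justifying that the $H$-action on $H_2(P,\ZZ/2\ZZ) \cong \ZZ/2\ZZ$ is trivial, which the paper leaves implicit.
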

The following lemma is used in the proof of Proposition \ref{obstructionprop}.
\begin{lemma} \label{lemma:injectivereduce}
Let $M$ be a $\ZZ/2\ZZ$-module and $i \geq 0$ be odd. For every non-zero $x \in H^i(\ZZ/2\ZZ,M)$ there exists a $\ZZ/2\ZZ$-equivariant map $$\pi:M \rightarrow \ZZ/2\ZZ$$ such that  if $$\pi_*: H^i(\ZZ/2\ZZ,M) \rightarrow H^i(\ZZ/2\ZZ,\ZZ/2\ZZ)$$ is the induced map, then $\pi_*(x) \neq 0.$ 
\end{lemma}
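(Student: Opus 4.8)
The plan is to reduce the statement to the explicit $2$-periodic description of the cohomology of the cyclic group of order two. Write $G=\ZZ/2\ZZ$ with generator $\sigma$, and regard $M$ — read, as is needed and intended, as a module over $\F_2[G]$, i.e.\ an $\F_2$-vector space with a $G$-action — together with the target $\ZZ/2\ZZ$ (carrying its only possible, hence trivial, $G$-action) as $\F_2[G]$-modules. For such a module $V$ set $N_V=1+\sigma\colon V\to V$; since $2V=0$ and $\sigma^2=1$ one has $N_V=\sigma-1$ and $N_V^2=0$, so $\operatorname{im}N_V\subseteq\ker N_V$.

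First I would invoke the standard $2$-periodic free resolution $P_\bullet\to\ZZ$ over $\ZZ[G]$ to obtain, naturally in $V$, isomorphisms $H^i(G,V)\cong\ker N_V/\operatorname{im}N_V$ for every odd $i\geq 1$, with the property that a $G$-equivariant map $\pi\colon V\to V'$ induces, under these identifications, the map $[v]\mapsto[\pi(v)]$ — this is just "apply $\pi$" on the cochain complexes $\Hom_{\ZZ[G]}(P_\bullet,-)$. For $V'=\ZZ/2\ZZ$ with trivial action $N_{V'}=0$, so $H^i(G,\ZZ/2\ZZ)\cong\ZZ/2\ZZ$. Now, given $0\neq x\in H^i(G,M)$, choose a representative $m\in\ker N_M$ whose class in the $\F_2$-vector space $M/(\sigma-1)M=M/\operatorname{im}N_M$ is nonzero; pick an $\F_2$-linear functional $\phi$ on $M/(\sigma-1)M$ with $\phi(\bar m)=1$ (possible since $\bar m\neq 0$ and every subspace of an $\F_2$-vector space is a direct summand); and let $\pi$ be the composite $M\twoheadrightarrow M/(\sigma-1)M\xrightarrow{\phi}\ZZ/2\ZZ$. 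Since $\pi$ annihilates $(\sigma-1)M$ it is $G$-equivariant for the trivial action on the target, and by the naturality above $\pi_*(x)=[\pi(m)]=\phi(\bar m)=1\neq 0$ in $H^i(G,\ZZ/2\ZZ)\cong\ZZ/2\ZZ$.

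I do not expect a genuine obstacle here. The two points that require care are: (i) the functoriality of the periodic description, which is what makes $\pi_*$ computable simply as "apply $\pi$"; and (ii) the role of the hypotheses. The oddness of $i$ (hence $i\geq 1$) is what replaces $H^0(G,M)=M^G$, whose nonzero elements may well lie in $(\sigma-1)M$, by $\ker N_M/\operatorname{im}N_M$, in which a nonzero class always has nonzero image in the coinvariants $M/(\sigma-1)M$ — and that image is exactly what an equivariant map to $\ZZ/2\ZZ$ can detect. Likewise the fact that $M$ is $2$-torsion is essential: for $M=\ZZ/4$ with trivial $G$-action the conclusion fails, since every homomorphism $\ZZ/4\to\ZZ/2$ kills $2$. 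This is why "$\ZZ/2\ZZ$-module" must be understood here as "$\F_2[G]$-module".
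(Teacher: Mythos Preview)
Your argument is correct and follows essentially the same line as the paper's: both use the $2$-periodic resolution to identify $H^i(\ZZ/2\ZZ,M)$ for odd $i$ with $\ker N_M/(\sigma-1)M$, pass to the coinvariants $M_{\ZZ/2\ZZ}=M/(\sigma-1)M$, and finish by choosing a linear functional there. The only difference is packaging: the paper obtains the injection $H^i(\ZZ/2\ZZ,M)\hookrightarrow H^i(\ZZ/2\ZZ,M_{\ZZ/2\ZZ})$ from the long exact sequence attached to $0\to IM\to M\to M_{\ZZ/2\ZZ}\to 0$, after checking that every crossed homomorphism into $IM$ becomes principal in $M$, whereas you read the same injection off directly from the equality $\operatorname{im}N_M=(\sigma-1)M$ valid in characteristic~$2$. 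Your remark that the statement requires $M$ to be an $\F_2[\ZZ/2\ZZ]$-module is correct and worth making explicit: the paper's final clause (``in such a case the lemma is trivial'') tacitly uses that $M_{\ZZ/2\ZZ}$ is an $\F_2$-vector space, and indeed the only application in the paper is to $M=H_2(P,\ZZ/2\ZZ)$, which is one.
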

\begin{proof}
Recall that we have a free resolution of $\ZZ$ considered as a module over $\ZZ[\ZZ/2\ZZ]$ that is periodic of order two. Using this resolution we see that for any $\ZZ/2\ZZ$-module $M,$ 
$H^i(\ZZ/2\ZZ,M)$ for $i$ odd can be identified with the crossed homomorphisms $f: \ZZ/2\ZZ \rightarrow M$ modulo principal crossed homomorphisms. Denote by $M_{\ZZ/2\ZZ}$ the coinvariants of $M$ and by $I \subset \ZZ[\ZZ/2\ZZ]$ the augmentation ideal. Note that any crossed homomorphism $f: \ZZ/2\ZZ \rightarrow IM$ becomes a principal crossed homomorphism after composition with the inclusion $IM \rightarrow M.$ This implies that the map $$H^i(\ZZ/2\ZZ,IM) \rightarrow H^i(\ZZ/2\ZZ,M)$$ is zero, so that by exactness, the map $$p:H^i(\ZZ/2\ZZ,M) \rightarrow H^i(\ZZ/2\ZZ,M_{\ZZ/2\ZZ})$$ is injective. Suppose now that $x \in H^i(\ZZ/2\ZZ,M).$ By what we just have shown, to prove our lemma, we can reduce to the case where $M$ has a trivial $\ZZ/2\ZZ$-action. But in such a case the lemma is trivial.
\end{proof}
\begin{proof}[Proof of \ref{obstructionprop} and \ref{obstructionprop2}]
We will start by proving Proposition \ref{obstructionprop} and then prove Proposition \ref{obstructionprop2}. By assumption the obstruction $$\tilde{o}_2 \in H^3(\ZZ/2\ZZ,H_2(P,\ZZ/2\ZZ))$$ is non-zero. By Lemma \ref{lemma:injectivereduce} we can find a $\ZZ/2\ZZ$-equivariant map $\pi:H_2(P,\ZZ/2\ZZ) \rightarrow \ZZ/2\ZZ $ such that if $$\pi_*: H^3(\ZZ/2\ZZ,H_2(P,\ZZ/2\ZZ)) \rightarrow H^3(\ZZ/2\ZZ,\ZZ/2\ZZ)$$ is the induced map, then $\pi_*(\tilde{o}_2) \neq 0.$  We then see that $\pi_*(\tilde{o}_2)$ is the triple cup product of a generator of $H^1(\ZZ/2\ZZ ,\ZZ/2\ZZ).$ By pulling back $\tilde{o}_2$ by $p: \Gamma \rightarrow \ZZ/2\ZZ$ Corollary \ref{cor:obstructionprofinite} gives us an obstruction $$o_2 = p^*(\tilde{o}_2) \in H^3(\Gamma,H_2(P,\ZZ/2\ZZ))$$  which we claim is non-zero. The commutative diagram  $$\begin{tikzcd} H^3(\Gamma,H_2(P,\ZZ/2\ZZ))  \arrow[r,"\pi_*"]   & H^3(\Gamma,\ZZ/2\ZZ)  \\  H^3(\ZZ/2\ZZ,H_2(P,\ZZ/2\ZZ))  \arrow[r,"\pi_*"]  \arrow[u,"p^*"]  & H^3(\ZZ/2\ZZ,\ZZ/2\ZZ)  \arrow[u,"p^*"]  \end{tikzcd}$$  immediately gives that we are reduced to showing that $$\pi_*(o_2) = p^*(\pi_*(\tilde{o}_2)) \in H^3(\Gamma,\ZZ/2\ZZ)$$ is non-zero. We know that $\pi_*(\tilde{o}_2)$ is the triple cup product of the generator of $H^1(\ZZ/2\ZZ, \ZZ/2\ZZ).$ This observation together with the fact that  $$p^*:H^*(\ZZ/2\ZZ,\ZZ/2\ZZ) \rightarrow H^*(\Gamma,\ZZ/2\ZZ)$$ is a ring homomorphism finishes the proof of Proposition \ref{obstructionprop}, since by assumption the triple cup product of the element $x \in H^1(\Gamma,\ZZ/2\ZZ)$ is non-zero. To prove Proposition \ref{obstructionprop2}, we have by assumption that $H_2(P,\ZZ/2\ZZ) = \ZZ/2\ZZ$ and that $\tilde{o}_2 =  a^2 \cup b$ for $a,b \in H^1(\ZZ/2\ZZ \times \ZZ/2\ZZ, \ZZ/2\ZZ)$ distinct and non-zero. What remains is thus to show that $o_2 = p^*(\tilde{o}_2)$ is non-zero. This is guaranteed by the assumption that $x_1^2 \cup x_2 \neq 0. $
\end{proof}
The above propositions motivates the following two conditions on a finite group $G.$ 
\begin{definition}\label{def:groupA}
Let $G$ be a finite group. We say that $G$ has property $(\ast)_a$  if: \begin{enumerate}
\item There exists a surjective map $f:G \rightarrow \ZZ/2\ZZ.$ Denote by $P$ the kernel of $f.$ 
\item The order of $P_{ab}$ is odd.
\item The canonical $\ZZ/2\ZZ$-obstruction $\tilde{o}_2 \in H^3(\ZZ/2\ZZ,H_2(P,\ZZ/2\ZZ))$ is non-zero.
\end{enumerate}
\end{definition}
\begin{definition} \label{def:groupB}
Let $G$ be a finite group. We say that $G$ has property $(\ast)_b$ if: \begin{enumerate}
\item There exists a surjective map $f:G \rightarrow \ZZ/2\ZZ \times \ZZ/2\ZZ$ Denote by $P$ the kernel of $f.$ 
\item The order of $P_{ab}$ is odd and $H_2(P,\ZZ/2\ZZ) = \ZZ/2\ZZ.$ 
\item The canonical $\ZZ/2\ZZ$-obstruction $$\tilde{o}_2 \in H^3(\ZZ/2\ZZ \times \ZZ/2\ZZ,\ZZ/2\ZZ)$$ is equal to $a^2 \cup b$ for $a,b \in H^1(\ZZ/2\ZZ \times \ZZ/2\ZZ , \ZZ/2\ZZ)$ non-zero and distinct. 
\end{enumerate}
\end{definition}
These two properties of a finite group are matched by the following two conditions on a profinite group.
\begin{definition}\label{def:profiniteA}
Let $\Gamma$ be a profinite group. We say that $\Gamma$ has property $(\ast \ast)_a$ if for any surjection $\Gamma \rightarrow \ZZ/2\ZZ$ classified by $a \in H^1(\Gamma,\ZZ/2\ZZ),$ the triple cup product $$a^3\in H^3(\Gamma,\ZZ/2\ZZ)$$ is non-zero.
\end{definition}
\begin{definition}\label{def:profiniteB}
Let $\Gamma$ be a profinite group. We say that $\Gamma$ has property $(\ast \ast)_b$ if for any two distinct surjections $f,g:\Gamma \rightarrow \ZZ/2\ZZ$ classified by $a,b \in H^1(\Gamma,\ZZ/2\ZZ)$ respectively, the cup product $$a^2 \cup b\in H^3(\Gamma,\ZZ/2\ZZ)$$ is non-zero.
\end{definition}
We then have the following two corollaries, The first follows immediately from Proposition \ref{obstructionprop} and the second from Proposition \ref{obstructionprop2}.
\begin{corollary} \label{cor:nosurjections} 
Let $G$ have property $(\ast)_a$ and $\Gamma$ have property $(\ast \ast)_a.$ Then there are no continuous surjections $\Gamma \rightarrow G.$ 
\end{corollary}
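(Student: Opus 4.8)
\emph{Proof proposal.} The plan is to argue by contradiction: assuming a continuous surjection $\phi\colon\Gamma\to G$ exists, I would manufacture an embedding problem for $\Gamma$ whose hypotheses match those of Proposition~\ref{obstructionprop}, so that its obstruction is non-zero, contradicting the fact that $\phi$ solves it.

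Concretely, using clause (1) of property $(\ast)_a$ (Definition~\ref{def:groupA}), fix a surjection $f\colon G\to\ZZ/2\ZZ$ with kernel $P$ and set $p:=f\circ\phi\colon\Gamma\to\ZZ/2\ZZ$; being a composite of surjections, $p$ is a continuous surjection. This produces the embedding problem
\[
\begin{tikzcd}
& & & \Gamma \arrow[d,"p"] \\
1 \arrow[r] & P \arrow[r] & G \arrow[r,"f"] & \ZZ/2\ZZ \arrow[r] & 1,
\end{tikzcd}
\]
and $\phi$ is a (proper, hence a fortiori weak) solution of it. Next I would verify the hypotheses of Proposition~\ref{obstructionprop}. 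Clause (2) of property $(\ast)_a$ says that $|P_{ab}|$ is odd, so $P_{ab}\otimes_{\ZZ}\ZZ/2\ZZ=0$ and $P$ is $\ZZ/2\ZZ$-perfect. Clause (3) says that the canonical $\ZZ/2\ZZ$-obstruction $\tilde o_2\in H^3(\ZZ/2\ZZ,H_2(P,\ZZ/2\ZZ))$ is non-zero. Finally, if $x\in H^1(\Gamma,\ZZ/2\ZZ)$ is the class classifying $p$, then since $p$ is a surjection onto $\ZZ/2\ZZ$, property $(\ast\ast)_a$ (Definition~\ref{def:profiniteA}) applied to $p$ gives that $x\cup x\cup x\neq 0$ in $H^3(\Gamma,\ZZ/2\ZZ)$. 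Proposition~\ref{obstructionprop} then yields that the above embedding problem has no solution, contradicting the existence of $\phi$, and we conclude that there is no continuous surjection $\Gamma\to G$.

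Since everything here is an assembly of results already established, I do not anticipate a genuine obstacle; the proof is short. The only points deserving care are bookkeeping ones: that a continuous surjection $\phi$ with $f\circ\phi=p$ genuinely counts as a solution in the sense of Corollary~\ref{cor:obstructionprofinite} (so that the pulled-back obstruction $o_2=p^\ast(\tilde o_2)$ is forced to vanish), and that it is precisely the surjectivity of $p=f\circ\phi$ that licenses the use of property $(\ast\ast)_a$ — both of which are immediate once the definitions are unwound.
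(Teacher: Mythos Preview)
Your proposal is correct and is exactly the argument the paper has in mind: the paper merely states that the corollary ``follows immediately from Proposition~\ref{obstructionprop},'' and your write-up is the natural unpacking of that sentence. There is nothing to add.
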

\begin{corollary} \label{cor:nosurjections} 
Let $G$ have property $(\ast)_b$ and $\Gamma$ have property $(\ast \ast)_b.$ Then there are no continuous surjections $\Gamma \rightarrow G.$ 
\end{corollary}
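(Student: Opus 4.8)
The plan is to argue by contradiction, playing the hypotheses off against Proposition~\ref{obstructionprop2}. Suppose $\phi\colon\Gamma\to G$ is a continuous surjection. Since $G$ has property $(\ast)_b$, fix a surjection $f\colon G\to\ZZ/2\ZZ\times\ZZ/2\ZZ$ with kernel $P$ as in Definition~\ref{def:groupB}: the order of $P_{ab}$ is odd, $H_2(P,\ZZ/2\ZZ)=\ZZ/2\ZZ$, and the canonical $\ZZ/2\ZZ$-obstruction $\tilde o_2\in H^3(\ZZ/2\ZZ\times\ZZ/2\ZZ,\ZZ/2\ZZ)$ equals $a^2\cup b$ for distinct non-zero $a,b\in H^1(\ZZ/2\ZZ\times\ZZ/2\ZZ,\ZZ/2\ZZ)$. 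First I would record that $P$ is $\ZZ/2\ZZ$-perfect: $P_{ab}$ is a finite abelian group of odd order, so multiplication by $2$ is an automorphism of it and hence $P_{ab}\otimes_{\ZZ}\ZZ/2\ZZ=P_{ab}/2P_{ab}=0$.

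Next I would form $p=f\circ\phi\colon\Gamma\to\ZZ/2\ZZ\times\ZZ/2\ZZ$. This is continuous and surjective, being a composite of continuous surjections, so together with $f$ it constitutes an embedding problem for $\Gamma$, and by construction $\phi$ is a (weak, in fact proper) solution of it. Following the set-up preceding Corollary~\ref{cor:obstructionprofinite}, let $x_1=p^*(a)$ and $x_2=p^*(b)$ in $H^1(\Gamma,\ZZ/2\ZZ)$. Because $p$ is surjective, the induced map $p^*\colon\Hom(\ZZ/2\ZZ\times\ZZ/2\ZZ,\ZZ/2\ZZ)\to\Hom(\Gamma,\ZZ/2\ZZ)$ on first cohomology is injective; hence $x_1$ and $x_2$ are distinct and non-zero, and each classifies a surjection $\Gamma\to\ZZ/2\ZZ$. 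By property $(\ast\ast)_b$ of $\Gamma$ (Definition~\ref{def:profiniteB}), the cup product $x_1^2\cup x_2\in H^3(\Gamma,\ZZ/2\ZZ)$ is therefore non-zero.

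At this point every hypothesis of Proposition~\ref{obstructionprop2} is in force for the embedding problem $p\colon\Gamma\to\ZZ/2\ZZ\times\ZZ/2\ZZ$, $f\colon G\to\ZZ/2\ZZ\times\ZZ/2\ZZ$: the kernel $P$ is $\ZZ/2\ZZ$-perfect with $H_2(P,\ZZ/2\ZZ)=\ZZ/2\ZZ$, the obstruction $\tilde o_2$ has the form $a^2\cup b$ with $a,b$ distinct and non-zero, and $x_1^2\cup x_2\neq 0$. Proposition~\ref{obstructionprop2} then asserts that this embedding problem has no solution, contradicting the fact that $\phi$ is one. Hence no continuous surjection $\Gamma\to G$ exists. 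The argument is essentially bookkeeping once Proposition~\ref{obstructionprop2} is granted; the only step requiring genuine care is the injectivity of $p^*$ on $H^1$ — equivalently, the fact that the two classes pulled back from $\ZZ/2\ZZ\times\ZZ/2\ZZ$ remain distinct and non-zero over $\Gamma$ — and this is exactly what allows property $(\ast\ast)_b$ to be invoked, being immediate from the surjectivity of $p=f\circ\phi$.
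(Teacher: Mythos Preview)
Your proof is correct and is precisely the intended argument: the paper states only that the corollary follows immediately from Proposition~\ref{obstructionprop2}, and you have simply written out the details of that deduction. The one point worth keeping explicit, which you handled correctly, is that the surjectivity of $p=f\circ\phi$ guarantees $x_1,x_2$ are distinct and non-zero so that property $(\ast\ast)_b$ applies.
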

\subsection{Two infinite families of groups \nopunct}
In this subsection we will start by showing that there is an infinite family of finite groups $G$ satisfying property $(\ast)_a.$ As a corollary of this, we will then produce an infinite family of groups satisfying property $(\ast)_b.$ 
Take now $q=p^m$ to be an odd prime power and $\alpha \in \Gal(\mathbb{F}_{q^2}/\mathbb{F}_q)$ to be the generator. Let $\PSL(2,q^2)$ be the projective special linear group over $\mathbb{F}_{q^2}$ and consider $$\Aut(\PSL(2,q^2)) \cong \PGL(2,q^2) \rtimes \ZZ/2\ZZ$$ where $\ZZ/2\ZZ$ acts on $\PGL(2,q^2)$ by $\alpha.$ There are three subgroups of $\PGL(2,q^2) \rtimes \ZZ/2\ZZ$ of index two that contains $\PSL(2,q^2).$ Two of these groups correspond to $\PGL(2,q^2)$ and $$\PSL(2,q^2) \rtimes \ZZ/2\ZZ$$ respectively. The third subgroup of index $2$ is traditionally denoted by $M(q^2)$ and is the one of interest to us. When $q=3,$ $M(q^2)$ is known as $M_{10},$ the Mathieu group of degree $10.$ Concretely, $$M(q^2) = \PSL(2,q^2) \cup \alpha \tau \PSL(2,q^2)$$ where $\tau \in \PGL(2,q^2) \setminus \PSL(2,q^2).$ 
\begin{prop}\label{prop:mproperty}
Let $q=p^m$ be an odd prime power. Then $M(q^2)$ as defined above satisfies property $(\ast)_a.$ 
\end{prop}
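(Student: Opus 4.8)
The plan is to check the three clauses of Definition~\ref{def:groupA} in turn. Clause~(1) holds with $f\colon M(q^2)\to\ZZ/2\ZZ$ the quotient by the index-two normal subgroup $\PSL(2,q^2)$, so $P=\PSL(2,q^2)$. Clause~(2) holds because $q^2\ge 9$, so $\PSL(2,q^2)$ is a nonabelian simple group; hence $P_{ab}=1$ has order $1$ (odd) and $P$ is $\ZZ/2\ZZ$-perfect. The work is in clause~(3). First I would identify all the data. Since $H_1(P,\ZZ)=0$ and the Schur multiplier $H_2(P,\ZZ)$ is $\ZZ/2\ZZ$ when $q>3$ and $\ZZ/6\ZZ$ when $q=3$, the universal coefficient theorem gives $H_2(P,\ZZ/2\ZZ)\cong\ZZ/2\ZZ$, on which $G=M(q^2)$ necessarily acts trivially, so $H^3(\ZZ/2\ZZ,H_2(P,\ZZ/2\ZZ))\cong H^3(\ZZ/2\ZZ,\ZZ/2\ZZ)\cong\ZZ/2\ZZ$. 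The $a$-central extension ($a=1$) of $P$ is the pushout of the universal central extension of $P$ along the reduction $H_2(P,\ZZ)\to H_2(P,\ZZ/2\ZZ)$, which in every case is the canonical cover $1\to Z\to\SL(2,q^2)\xrightarrow{\pi}\PSL(2,q^2)\to 1$ with $Z:=Z(\SL(2,q^2))\cong\ZZ/2\ZZ$. Thus the $(\ZZ/2\ZZ,1)$-obstruction crossed module of Lemma~\ref{crossedperfect} is $d=i\circ\pi\colon\SL(2,q^2)\to M(q^2)$, with kernel $Z$ and cokernel $\ZZ/2\ZZ$, and $\tilde o_2\in H^3(\ZZ/2\ZZ,\ZZ/2\ZZ)\cong\ZZ/2\ZZ$ is its class (Proposition~\ref{prop:obstructioncrossed}).

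To prove $\tilde o_2\neq 0$ I would translate it into a question about the outer automorphism. The class $\tilde o_2$ vanishes if and only if the crossed module $\SL(2,q^2)\to M(q^2)$ is trivial, equivalently the cover $\SL(2,q^2)\to\PSL(2,q^2)$ extends to a central extension $1\to Z\to E\to M(q^2)\to 1$ whose preimage of $\PSL(2,q^2)$ is $\SL(2,q^2)$; alternatively one can simply unwind the definition of $\tilde o_2$ into a normalised $3$-cocycle on $\ZZ/2\ZZ=\{1,\beta\}$, whose only possibly nonzero value $\tilde o_2(\beta,\beta,\beta)\in Z$ is computed below. Since $\SL(2,q^2)$ is perfect with centre $Z$, restriction gives an injection $\Aut(\SL(2,q^2))\hookrightarrow\Aut(\PSL(2,q^2))$, so the outer automorphism $\beta$ of $\PSL(2,q^2)$ with $\langle\PSL(2,q^2),\beta\rangle=M(q^2)$ has a unique lift $\hat\beta\in\Aut(\SL(2,q^2))$. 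Any $E$ as above must then have the form $\SL(2,q^2)\cup t\,\SL(2,q^2)$ with $c_t|_{\SL(2,q^2)}=\hat\beta$ (here $c_g(x):=gxg^{-1}$) and $t^2\in\SL(2,q^2)$, which forces $c_{t^2}=\hat\beta^2$ and $\hat\beta(t^2)=t^2$; and in the cocycle picture $\tilde o_2(\beta,\beta,\beta)=\sigma\,\hat\beta(\sigma)^{-1}$ for the chosen lift $\sigma$ of $t^2$. So in either formulation it comes down to: is one of the two elements $\sigma\in\SL(2,q^2)$ with $c_\sigma=\hat\beta^2$ fixed by $\hat\beta$?

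Now the computation. Realise $\beta$ as the map induced by $c_d\circ\alpha$, where $\alpha$ is the Frobenius of $\mathbb{F}_{q^2}/\mathbb{F}_q$ and $d=\mathrm{diag}(1,\nu)$ with $\nu\in\mathbb{F}_{q^2}^*$ a non-square, so that $M(q^2)=\PSL(2,q^2)\cup\alpha\tau\,\PSL(2,q^2)$ with $\tau$ the image of $d$; then $\hat\beta=c_d\circ\alpha$ on $\SL(2,q^2)$ and
$$\hat\beta^2=c_{d\cdot\alpha(d)}=c_{\mathrm{diag}(1,\,a)},\qquad a:=\nu^{q+1}=N_{\mathbb{F}_{q^2}/\mathbb{F}_q}(\nu)\in\mathbb{F}_q^*.$$
The crux is that $a$ is a non-square in $\mathbb{F}_q^*$: for a generator $g$ of $\mathbb{F}_{q^2}^*$ the element $g^{q+1}$ has order $q-1$ and so generates $\mathbb{F}_q^*$, hence $N_{\mathbb{F}_{q^2}/\mathbb{F}_q}$ carries non-squares of $\mathbb{F}_{q^2}^*$ to non-squares of $\mathbb{F}_q^*$; on the other hand $a$ is a square in $\mathbb{F}_{q^2}^*$ (as $q+1$ is even), say $a=c^2$. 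So $c\notin\mathbb{F}_q$ while $c^2\in\mathbb{F}_q$, whence $c^q=-c$. Taking $\sigma:=\mathrm{diag}(c^{-1},c)\in\SL(2,q^2)$ we get $c_\sigma=c_{\sigma\cdot cI}=c_{\mathrm{diag}(1,a)}=\hat\beta^2$, but $\hat\beta(\sigma)=c_d(\alpha(\sigma))=\mathrm{diag}(c^{-q},c^q)=\mathrm{diag}(-c^{-1},-c)=-\sigma\neq\sigma$, and likewise $\hat\beta(-\sigma)=\sigma\neq-\sigma$. Hence no $E$ exists, equivalently $\tilde o_2(\beta,\beta,\beta)=\sigma(-\sigma)^{-1}=-1\in Z$ is nontrivial, so $\tilde o_2\neq 0$. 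This verifies clause~(3), and therefore $M(q^2)$ satisfies $(\ast)_a$.

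The main obstacle is the middle step: pinning down the abstractly defined obstruction $\tilde o_2$ — the classifying invariant of the crossed module $\SL(2,q^2)\to M(q^2)$ — as the concrete, checkable condition on $\hat\beta$, namely whether the square of the canonical lift of the outer automorphism is effected by conjugation by an $\hat\beta$-fixed element of $\SL(2,q^2)$. This requires either the standard fact that such a crossed module is split exactly when its central extension extends over $M(q^2)$, or a careful but routine unwinding of the $3$-cocycle of \cite[IV.6]{BrownCohomology}. Once that translation is in place the computation is short, its one nontrivial ingredient being the elementary observation that the field norm $\mathbb{F}_{q^2}^*\to\mathbb{F}_q^*$ sends non-squares to non-squares.
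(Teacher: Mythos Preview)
Your proof is correct and follows essentially the same route as the paper's. Both arguments identify the $(\ZZ/2\ZZ,1)$-obstruction crossed module as $\SL(2,q^2)\to M(q^2)$ and compute the $3$-cocycle value at $(\beta,\beta,\beta)$ to be nontrivial via the same element: your $\sigma=\mathrm{diag}(c^{-1},c)$ with $c=\nu^{(q+1)/2}$ is exactly the paper's lift $\tilde F(1,1)=\mathrm{diag}(\theta^{-(q+1)/2},\theta^{(q+1)/2})$, and your verification $\hat\beta(\sigma)=-\sigma$ is the paper's ``one checks that $c(1,1,1)=1$''. The differences are presentational: you add the conceptual reformulation (vanishing of $\tilde o_2$ $\Leftrightarrow$ the double cover extends to a central extension of $M(q^2)$), and you isolate the arithmetic kernel of the computation (the field norm $\mathbb{F}_{q^2}^*\to\mathbb{F}_q^*$ sends non-squares to non-squares), which the paper leaves inside the explicit cocycle check. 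One minor wording issue: your phrase ``the chosen lift $\sigma$ of $t^2$'' refers to the hypothetical $t\in E$ before you know $E$ exists; it would be cleaner to say $\sigma$ is a lift of $s(\beta)^2\in\PSL(2,q^2)$ for a chosen set-theoretic section $s$, which is what you actually compute.
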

\begin{proof}
By the above discussion, we have a short exact sequence $$1 \rightarrow \PSL(2,q^2) \xrightarrow{i} M(q^2) \rightarrow \ZZ/2\ZZ \rightarrow 1.$$  We know that \cite[Table 4.1, pg. 302]{GorensteinFinite} $$H_2(\PSL(2,q^2),\ZZ/2\ZZ) \cong \ZZ/2\ZZ$$ and that the non-trivial dual class is realized by the central extension $$1 \rightarrow \ZZ/2\ZZ \rightarrow \SL(2,q^2) \xrightarrow{p} \PSL(2,q^2) \rightarrow 1.$$  We then derive the crossed module $$d=i \circ p: \SL(2,q^2) \rightarrow M(q^2)$$ which gives the exact sequence $$1 \rightarrow \ZZ/2\ZZ \rightarrow \SL(2,q^2) \xrightarrow{d} M(q^2) \rightarrow \ZZ/2\ZZ \rightarrow 1.$$ Here $M(q^2) = \PSL(2,q^2) \cup \alpha \tau \PSL(2,q^2)$ acts on $\SL(2,q^2)$ in the obvious way. To find the element classifying this crossed module, we will find an explicit $3$-cocycle $c: \ZZ/2\ZZ \times \ZZ/2\ZZ \times \ZZ/2\ZZ \rightarrow \ZZ/2\ZZ$ whose cohomology class in $H^3(\ZZ/2\ZZ,\ZZ/2\ZZ)$ corresponds to our crossed module. To start with, we choose a set-theoretic section $s: \ZZ/2\ZZ \rightarrow M(q^2),$ for example, $s(0) = I$ and $s(1) = \alpha \tau$ where $\tau \in \PGL(2,q^2) \setminus \PSL(2,q^2).$ If $\theta$ is a primitive element of $\mathbb{F}_{q^2}$ one  can take $\tau$ to be the equivalence class in $\PGL(2,q^2)$ of the matrix $$\tau = \begin{bmatrix} 1 & 0 \\ 0 &  \theta \end{bmatrix}.$$ The failure of $s$ to be a group homomorphism is measured by a function $$F: \ZZ/2\ZZ \times \ZZ/2\ZZ \rightarrow \ker d$$ that is, $F$ satisfies $$s(i)s(j) = F(i,j) s(i+j)$$ for $i,j \in \ZZ/2\ZZ.$ The only non-zero value of $F$ is when $i=j=1,$  and with the explicit choice of $\tau$ as above,  $F(1,1)$ can be taken to be the matrix $$\begin{bmatrix}\theta^{-(q+1)/2} & 0 \\0 & \theta^{(q+1)/2} \end{bmatrix}$$ in $\PSL(2,q^2).$ We can lift $F$ to a function $$\tilde{F} : \ZZ/2\ZZ \times \ZZ/2\ZZ \rightarrow \SL(2,q^2) $$ by letting $$\tilde{F}(1,1) = \begin{bmatrix}  \theta^{-(q+1)/2} & 0 \\ 0 & \theta^{(q+1)/2} \end{bmatrix} $$ and $\tilde{F}$ the identity otherwise. Let $$c:\ZZ/2\ZZ \times \ZZ/2\ZZ \times \ZZ/2\ZZ \rightarrow \ZZ/2\ZZ$$ be such that for $g,h,k \in \ZZ/2\ZZ,$ $$^{s(g)}\tilde{F}(h,k) \tilde{F}(g,hk) = i(c(g,h,k)) F(g,h)F(gh,k).$$ One once again checks that $c(g,h,k)$ is zero unless $$g=h=k=1,$$ and in this case $c(1,1,1) = 1.$ So $c:\ZZ/2\ZZ \times \ZZ/2\ZZ \times \ZZ/2\ZZ \rightarrow \ZZ/2\ZZ$ gives us a cocycle which in cohomology corresponds to the triple cup product of a generator of $H^1(\ZZ/2\ZZ,\ZZ/2\ZZ).$  By \cite[IV.5]{BrownCohomology} this element classifies the corresponding crossed module, so our proposition follows.
\end{proof}
\begin{prop} \label{prop:autproperty}
Let $q=p^m$ be an odd prime power. Then $\Aut(\PSL(2,q^2))$ satisfies property $(\ast)_b.$ 
\end{prop}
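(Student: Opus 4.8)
The plan is to verify the three conditions of Definition~\ref{def:groupB} for the group $G=\Aut(\PSL(2,q^2))\cong\PGL(2,q^2)\rtimes\ZZ/2\ZZ$, with $f\colon G\twoheadrightarrow\Out(\PSL(2,q^2))\cong\ZZ/2\ZZ\times\ZZ/2\ZZ=:V$ the canonical quotient and $P=\ker f=\PSL(2,q^2)$. Conditions (1) and (2) are immediate: $f$ is surjective, and since $q^2\geq 9$ the group $\PSL(2,q^2)$ is nonabelian simple, hence perfect, so $P_{ab}=0$ has odd order while $H_2(P,\ZZ/2\ZZ)\cong\ZZ/2\ZZ$ by \cite[Table~4.1]{GorensteinFinite} (the input already used in Proposition~\ref{prop:mproperty}). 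In particular $P$ is $\ZZ/2\ZZ$-perfect, the associated central extension of $P$ is $\SL(2,q^2)\to\PSL(2,q^2)$ (as in Proposition~\ref{prop:mproperty}), and since $H_2(P,\ZZ/2\ZZ)=\ZZ/2\ZZ$ the $V$-action on it is trivial, so the obstruction $\tilde o_2\in H^3(V,\ZZ/2\ZZ)$ (trivial coefficients) of Proposition~\ref{prop:obstructioncrossed} is defined. All the work goes into condition (3): that $\tilde o_2=a^2\cup b$ for two distinct nonzero $a,b\in H^1(V,\ZZ/2\ZZ)$.

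To identify $\tilde o_2$ I will restrict it to the three order-two subgroups of $V$. These are exactly the images of the three index-two subgroups of $G$ containing $P$, namely $\PGL(2,q^2)$, $\PSL(2,q^2)\rtimes\ZZ/2\ZZ$ and $M(q^2)$. The obstruction crossed module $d\colon C\to G$ of Lemma~\ref{crossedperfect} has image $P$, so it restricts along each inclusion $S\hookrightarrow G$ to a crossed module $C\to S$ which, by the uniqueness clause of Lemma~\ref{crossedperfect}, is the obstruction crossed module of $1\to P\to S\to S/P\to1$; by naturality of the $H^3$-classification of crossed modules the restriction of $\tilde o_2$ along $S/P\hookrightarrow V$ is then the $\ZZ/2\ZZ$-obstruction attached to $S$. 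Now: for $S=\PSL(2,q^2)\rtimes\ZZ/2\ZZ$ the map $S\to\ZZ/2\ZZ$ splits, so this obstruction vanishes by Proposition~\ref{prop:obstructioncrossed}; for $S=M(q^2)$ it is the triple cup product of the generator of $H^1(\ZZ/2\ZZ,\ZZ/2\ZZ)$, hence nonzero, by Proposition~\ref{prop:mproperty}; and for $S=\PGL(2,q^2)$ I claim it vanishes. This last point is the only real computation, and I will get it by rerunning the cocycle argument of Proposition~\ref{prop:mproperty}: with the set-section $s(1)=\tau=[\operatorname{diag}(1,\theta)]$ of $\PGL(2,q^2)\to\ZZ/2\ZZ$ ($\theta$ a primitive element of $\mathbb{F}_{q^2}$) one has $F(1,1)=\tau^2=[\operatorname{diag}(1,\theta^2)]\in\PSL(2,q^2)$, lifting to $\tilde F(1,1)=\operatorname{diag}(\theta^{-1},\theta)\in\SL(2,q^2)$ and $\tilde F$ trivial otherwise, and the only possibly-nonzero value of the resulting $3$-cocycle, $c(1,1,1)\in\ker d=\{\pm I\}$, is the class of ${}^{\tau}\tilde F(1,1)\cdot\tilde F(1,1)^{-1}$. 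Since $\tau$ is diagonal it commutes with the diagonal matrix $\tilde F(1,1)$, so this equals $I$ and $c\equiv0$; hence the $\PGL(2,q^2)$-obstruction is zero. (By contrast, in $M(q^2)$ one conjugates by $\alpha\tau$, the Frobenius twist turns $\tilde F(1,1)$ into $\operatorname{diag}(\theta^{-q(q+1)/2},\theta^{q(q+1)/2})$, and the class becomes $\operatorname{diag}(\theta^{-(q^2-1)/2},\theta^{(q^2-1)/2})=-I$, which is why there the obstruction survives.)

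It remains a short linear-algebra step in $H^*(V,\ZZ/2\ZZ)=\ZZ/2\ZZ[x,y]$. Restriction to the three order-two subgroups sends $\alpha x^3+\beta x^2y+\gamma xy^2+\delta y^3$ to $(\alpha,\,\delta,\,\alpha+\beta+\gamma+\delta)$ (times the generator of $H^3(\ZZ/2\ZZ,\ZZ/2\ZZ)$), so the total restriction $H^3(V,\ZZ/2\ZZ)\to(\ZZ/2\ZZ)^3$ is onto with kernel spanned by $x^2y+xy^2=xy(x+y)$. By the previous paragraph $\tilde o_2$ restricts to zero on two of the three subgroups and to something nonzero on the third, so its image under the total restriction is one of $(1,0,0),(0,1,0),(0,0,1)$; and the two preimages of each such vector (differing by $xy(x+y)$) are both of the form $u^2\cup v$ for distinct nonzero linear forms $u,v$ --- e.g. the preimages of $(0,0,1)$ are $x^2y=x^2\cup y$ and $xy^2=y^2\cup x$, the other two cases following by permuting $x,y,x+y$. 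Hence $\tilde o_2=a^2\cup b$ with $a,b\in H^1(V,\ZZ/2\ZZ)$ distinct and nonzero, which is condition (3), and $\Aut(\PSL(2,q^2))$ has property $(\ast)_b$. The one genuinely new ingredient is the vanishing of the $\PGL(2,q^2)$-obstruction; the only thing to be careful about there is bookkeeping the conjugation action of $\PGL(2,q^2)$ on $\SL(2,q^2)$ and the choice of lifts from $\PGL$ to $\SL$ --- everything else is formal or already proved.
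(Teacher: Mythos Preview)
Your argument is correct and follows essentially the same route as the paper: restrict the obstruction $\tilde o_2\in H^3(V,\ZZ/2\ZZ)$ to the three order-two subgroups of $V$, identify these restrictions with the $\ZZ/2\ZZ$-obstructions of $\PGL(2,q^2)$, $\PSL(2,q^2)\rtimes\ZZ/2\ZZ$ and $M(q^2)$, and use that the first two vanish while the third is nonzero (Proposition~\ref{prop:mproperty}) to pin $\tilde o_2$ down to $a^2\cup b$ or $a\cup b^2$. The only difference is that for $\PGL(2,q^2)$ you recompute the cocycle, whereas the paper simply observes that $\PGL(2,q^2)\to\ZZ/2\ZZ$ also splits (e.g.\ via the involution $\bigl[\begin{smallmatrix}0&1\\ \theta&0\end{smallmatrix}\bigr]$), so the obstruction vanishes immediately by Proposition~\ref{prop:obstructioncrossed}; your extra computation is correct but unnecessary.
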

\begin{proof}
We have a short exact sequence $$ 1 \rightarrow \PSL(2,q^2) \rightarrow \Aut(\PSL(2,q^2)) \rightarrow \ZZ/2\ZZ \times \ZZ/2\ZZ \rightarrow 1.$$ As in the proof of \ref{prop:mproperty}, we derive from this exact sequence the crossed module $$1 \rightarrow \ZZ/2\ZZ \rightarrow \SL(2,q^2) \rightarrow \Aut(\PSL(2,q^2)) \rightarrow \ZZ/2\ZZ \times \ZZ/2\ZZ \rightarrow 1.$$ Further this crossed module corresponds to an element $$x \in H^3(\ZZ/2\ZZ \times \ZZ/2\ZZ,\ZZ/2\ZZ).$$ This group $H^3(\ZZ/2\ZZ \times \ZZ/2\ZZ,\ZZ/2\ZZ)$ is $4$-dimensional as a $\ZZ/2\ZZ$-vector space, spanned by $$a^3,a^2 \cup b, a \cup b^2, b^3$$ for $a,b \in H^1(\ZZ/2\ZZ \times \ZZ/2\ZZ, \ZZ/2\ZZ)$ generators. From this crossed module we get three different crossed modules by pullback along different maps $\ZZ/2\ZZ \rightarrow \ZZ/2\ZZ \times \ZZ/2\ZZ:$ the first two by pullback along the inclusions $\ZZ/2\ZZ \rightarrow \ZZ/2\ZZ \times \ZZ/2\ZZ$ of the $i$th factor and the third by pullback along the diagonal map $\Delta:\ZZ/2\ZZ \rightarrow \ZZ/2\ZZ \times \ZZ/2\ZZ.$ Since for the first two crossed modules the projection map onto $\ZZ/2\ZZ$ has a section, they are equivalent to the trivial crossed module and thus represent $0$ in $H^3(\ZZ/2\ZZ,\ZZ/2\ZZ).$ The third crossed module is the crossed module occuring in the proof of Proposition \ref{prop:mproperty}. This implies that $x$ is either equal to $a^2 \cup b$ or $a \cup b^2.$  But this shows that $\Aut(\PSL(2,q^2))$ satisfies property $(\ast)_b.$
\end{proof}
\section{Obstructions to the unramified inverse Galois problem} \label{sec:proppro}
In this section we will show that groups that satisfy property $(\ast)_a$ (respectively $(\ast)_b$) cannot be unramified Galois groups when $K$ is a totally imaginary number field such that $\Gamma_K^{ur}$ satisfies property $(\ast \ast)_a$ (respectively $(\ast \ast)_b).$ 
 To explain notation in the following theorem, note that if $L/K$ is an unramified quadratic extension, then if we write $L = K(\sqrt{c}),$ $\divis(c)$ must be an even divisor. Thus it makes sense to write $\divis(c)/2.$
\begin{theorem} \label{thm:nosolution}
Let $K$ be a totally imaginary number field and $G$ be a finite group satisfying property $(\ast)_a$ (for example $M(q^2)).$ Suppose that for each unramified quadratic extension $L=K(\sqrt{c}),$ where $c \in K^* \setminus (K^*)^2,$ that if $$\divis(c)/2  = \prod \mathfrak{p}_i^{e_i}$$ is the prime factorization of $\divis(c)/2,$ then $$\sum_{\mathfrak{p}_i \text{ unramified in L} } e_i \equiv 1  \mod 2.$$ Then there does not exist an unramified Galois extension $M/K$ with $G= \Gal(M/K).$ 
\end{theorem}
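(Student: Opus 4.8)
The plan is to reduce to Corollary~\ref{cor:nosurjections} (the one relating property $(\ast)_a$ to property $(\ast\ast)_a$) together with Theorem~\ref{thm:cupstr}. The starting point is that $G$ occurs as the Galois group of an everywhere unramified extension of $K$ if and only if there is a continuous surjection $\Gamma_K^{ur}\to G$: given such an $M/K$ one restricts the quotient map $\Gamma_K^{ur}=\Gal(K^{ur}/K)\to\Gal(M/K)$, and conversely one takes the fixed field of the kernel. Since by hypothesis $G$ has property $(\ast)_a$, Corollary~\ref{cor:nosurjections} reduces the theorem to showing that $\Gamma_K^{ur}$ has property $(\ast\ast)_a$; that is, for every surjection $\Gamma_K^{ur}\to\ZZ/2\ZZ$, equivalently every nonzero class $a\in H^1(\Gamma_K^{ur},\ZZ/2\ZZ)$, the triple cup product $a^3\in H^3(\Gamma_K^{ur},\ZZ/2\ZZ)$ is nonzero.

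To prove this I would pass to the étale site via the tautological map $k\colon X_{et}\to B\Gamma_K^{ur}$, which induces an isomorphism on $H^1(-,\ZZ/2\ZZ)$. Set $\bar a=k^*(a)\in H^1(X,\ZZ/2\ZZ)$; this is nonzero, and since $k^*$ is a homomorphism of graded rings, $k^*(a^3)=\bar a^3$. Hence it suffices to show $\bar a^3\neq 0$ in $H^3(X,\ZZ/2\ZZ)$ --- no injectivity of $k^*$ in degree $3$ is needed, only that a class with nonzero image is itself nonzero. Under the correspondence between nonzero elements of $H^1(X,\ZZ/2\ZZ)$ and everywhere unramified quadratic extensions, $\bar a$ is represented by some $L=K(\sqrt c)$ with $c\in K^*\setminus(K^*)^2$ and $\divis(c)$ even.

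Now I would apply Theorem~\ref{thm:cupstr} with $x=y=\bar a$, so that there $L=M$ and one may take $d=c$: writing $\divis(c)/2=\prod_i\mathfrak p_i^{e_i}$, the theorem identifies the nonvanishing of $\bar a\cup\bar a\cup\bar a$ with the congruence $\sum_{\mathfrak p_i\text{ inert in }L}e_i\equiv 1\pmod 2$. This is exactly the congruence hypothesized on $K$, applied to the extension $L$, so $\bar a^3\neq 0$ and hence $a^3\neq 0$. Since $a$ (equivalently $L$) was arbitrary, $\Gamma_K^{ur}$ has property $(\ast\ast)_a$, and the theorem follows.

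The only substantial ingredient is Theorem~\ref{thm:cupstr} itself, whose proof is deferred to Section~\ref{section:cohomologyring} and rests on the full computation of the ring $H^*(X,\ZZ/2\ZZ)$ by Artin--Verdier duality; granting it, the argument above is essentially formal. The points that want a little care are the identification of everywhere unramified quadratic extensions $L=K(\sqrt c)$ with nonzero classes in $H^1(X,\ZZ/2\ZZ)\cong H^1(\Gamma_K^{ur},\ZZ/2\ZZ)$ --- so that the hypothesis, which ranges over all such $L$, really does account for every surjection $\Gamma_K^{ur}\to\ZZ/2\ZZ$ occurring in property $(\ast\ast)_a$ --- and the fact that $k^*$ is an isomorphism only in degree $1$, so that one must reach the conclusion through the cup-product computation on $X$ rather than by directly comparing $H^3(X,\ZZ/2\ZZ)$ with $H^3(\Gamma_K^{ur},\ZZ/2\ZZ)$.
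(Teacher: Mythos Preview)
Your proposal is correct and follows essentially the same route as the paper's proof: both reduce via Corollary~\ref{cor:nosurjections} to verifying property $(\ast\ast)_a$ for $\Gamma_K^{ur}$, pass to étale cohomology through the comparison map $k\colon X_{et}\to B\Gamma_K^{ur}$, use that $k^*$ is a ring homomorphism and an isomorphism on $H^1$, and then invoke the triple cup product computation (Proposition~\ref{prop:triplecup}, which is the same as Theorem~\ref{thm:cupstr}) with $x=y$. Your explicit remark that only nonvanishing of the image under $k^*$ is needed, not injectivity in degree~$3$, is a point the paper uses tacitly.
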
 
\begin{theorem} \label{thm:nosolution2}
Let $K$ be a totally imaginary number field and $G$ be a finite group satisfying property $(\ast)_b$ (for example $\Aut(\PSL(2,q^2)).$ Suppose that for any two distinct quadratic extension $L=K(\sqrt{c}), M = K(\sqrt{d})$ where $c,d \in K^* \setminus (K^*)^2,$ that if $$\divis(d)/2 = \prod \mathfrak{p}_i^{e_i}$$ is the prime factorization of $\divis(d)/2,$ then $$\sum_{\mathfrak{p}_i \text{ unramified in L} } e_i \equiv 1  \mod 2.$$ Then there does not exist an unramified Galois extension $M/K$ with $G= \Gal(M/K).$ 
\end{theorem}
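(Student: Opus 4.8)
The plan is to derive Theorem \ref{thm:nosolution2} from the general obstruction theory of Section \ref{section:embedding} together with the cup product computation of Theorem \ref{thm:cupstr}. First, realizing a finite group $G$ as $\Gal(M/K)$ for some unramified extension $M/K$ is the same as producing a continuous surjection $\Gamma_K^{ur}\to G$. By hypothesis $G$ has property $(\ast)_b$ (for $\Aut(\PSL(2,q^2))$ this is Proposition \ref{prop:autproperty}). Hence, by Corollary \ref{cor:nosurjections} (the one pairing $(\ast)_b$ with $(\ast\ast)_b$), to prove the non-realizability statement it suffices to check that, under the stated arithmetic hypothesis, $\Gamma_K^{ur}$ has property $(\ast\ast)_b$: that is, $a^2\cup b\neq 0$ in $H^3(\Gamma_K^{ur},\ZZ/2\ZZ)$ for every \emph{ordered} pair of distinct non-zero classes $a,b\in H^1(\Gamma_K^{ur},\ZZ/2\ZZ)$.

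To verify property $(\ast\ast)_b$ I would pass to the étale site of $X=\Spec\mathcal O_K$ via the tautological map of sites $k\colon X_{et}\to B\Gamma_K^{ur}$, which induces a \emph{ring} homomorphism $k^\ast$ on mod-$2$ cohomology and, as recalled in the introduction, an isomorphism on $H^1$. So given distinct non-zero $a,b\in H^1(\Gamma_K^{ur},\ZZ/2\ZZ)$, the classes $x:=k^\ast a$ and $y:=k^\ast b$ are distinct and non-zero in $H^1(X,\ZZ/2\ZZ)$, and $k^\ast(a^2\cup b)=x^2\cup y$; thus it is enough to show $x^2\cup y\neq 0$ in $H^3(X,\ZZ/2\ZZ)$ for all such $x,y$. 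Now $x$ and $y$ correspond to unramified quadratic extensions $L=K(\sqrt c)$ and $M=K(\sqrt d)$, and Theorem \ref{thm:cupstr} says exactly that $x^2\cup y\neq 0$ if and only if $\sum_{\mathfrak p_i\text{ inert in }L}e_i\equiv 1\pmod 2$, where $\divis(d)/2=\prod_i\mathfrak p_i^{e_i}$ — which is precisely the hypothesis of the theorem applied to the ordered pair $(L,M)$. (In the statement of the theorem the extensions $L,M/K$ are tacitly unramified, as is forced by $\divis(d)/2$ having to make sense; every $\mathfrak p_i$ is then automatically unramified in $L$, so ``unramified in $L$'' there is to be read as ``inert in $L$'', i.e.\ non-split, and with this reading $\sum_{\mathfrak p_i\text{ inert in }L}e_i\bmod 2$ depends only on the classes $x,y$ and not on $c,d$.) Running over all such ordered pairs gives property $(\ast\ast)_b$ for $\Gamma_K^{ur}$, and hence, by the previous paragraph, the non-existence of an unramified $M/K$ with $\Gal(M/K)=G$.

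It remains to realize the maximal solvable quotient. Since $\PSL(2,q^2)$ is a non-abelian simple group for every odd prime power $q$, it is perfect and equals the commutator subgroup of $\Aut(\PSL(2,q^2))$; the solvable residual of $\Aut(\PSL(2,q^2))$ is therefore $\PSL(2,q^2)$ and $\Aut(\PSL(2,q^2))^{sol}=\Aut(\PSL(2,q^2))^{ab}\cong\ZZ/2\ZZ\oplus\ZZ/2\ZZ$, so it suffices to produce a continuous surjection $\Gamma_K^{ur}\to\ZZ/2\ZZ\times\ZZ/2\ZZ$. The hypothesis involves two distinct quadratic unramified extensions of $K$, hence two distinct non-zero classes in $H^1(\Gamma_K^{ur},\ZZ/2\ZZ)$, so $\dim_{\mathbb F_2}H^1(\Gamma_K^{ur},\ZZ/2\ZZ)\geq 2$; equivalently, by class field theory $\Gamma_K^{ur,ab}\cong\Cl(K)$ with $\dim_{\mathbb F_2}\Cl(K)/2\geq 2$. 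Choosing two linearly independent homomorphisms $\Gamma_K^{ur}\to\ZZ/2\ZZ$ and combining them yields the desired surjection. Within this argument everything is formal once Theorem \ref{thm:cupstr} is available, so the genuine obstacle — deferred to Sections \ref{section:lemmasonetale}--\ref{section:cohomologyring} — is the computation of the ring $H^\ast(X,\ZZ/2\ZZ)$ via Artin--Verdier duality; the only mildly delicate point remaining here is the identification of the cohomological condition ``$x^2\cup y\neq 0$'' with the arithmetic one in the statement, which is the content of Theorem \ref{thm:cupstr}.
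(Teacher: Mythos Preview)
Your proof is correct and follows essentially the same route as the paper: reduce to showing $\Gamma_K^{ur}$ has property $(\ast\ast)_b$ via Corollary~\ref{cor:nosurjections}, then push the cup product down to $H^\ast(X,\ZZ/2\ZZ)$ along the ring map $k^\ast$ (an isomorphism on $H^1$) and invoke Proposition~\ref{prop:triplecup}/Theorem~\ref{thm:cupstr}. Your final paragraph on the solvable quotient proves the extra clause from the introduction's version of the theorem rather than the Section~\ref{sec:proppro} statement you were asked to prove, so it is surplus here (and note that the hypothesis is a universal statement, hence could be vacuous, in which case non-realizability of $G$ still follows trivially since $G$ surjects onto $\ZZ/2\ZZ\times\ZZ/2\ZZ$).
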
 
We now give two infinite families of imaginary quadratic number fields, such that for the first family, no group satisfying property $(\ast)_a$ occurs as an unramified Galois group, while for the latter family, no group satisfying $(\ast)_b$ can be realized as an unramified Galois group.
\begin{corollary}\label{prop:unramifiedQuadraticCup}
Let $p$ and $q$ be two primes such that $$p \equiv 1\mod 4,$$ $$q \equiv 3 \mod 4$$ and $$\genfrac(){}{0}{q}{p} = -1,$$ $K = \mathbb{Q}(\sqrt{-pq})$  and $G$ a finite group satisfying property $(\ast)$ (for example $M(q^2)).$  Then there does not exist an unramified Galois extension $L/K$ with $G= \Gal(L/K),$ but the  maximal solvable quotient $G^{solv} \cong \ZZ/2\ZZ$ is realizable as an unramified Galois group over $K.$  
\end{corollary}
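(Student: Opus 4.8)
The plan is to apply Theorem \ref{thm:nosolution} to $K=\mathbb{Q}(\sqrt{-pq})$, which reduces the statement to one class-field-theoretic input and one cup-product computation via Theorem \ref{thm:cupstr}. First I would compute $H^1(X,\ZZ/2\ZZ)\cong\Hom(\Cl_K,\ZZ/2\ZZ)$ for $X=\Spec\mathcal{O}_K$. Since $p\equiv 1$ and $q\equiv 3\pmod 4$ we have $-pq\equiv 1\pmod 4$, so the discriminant of $K$ is $-pq$ and exactly two rational primes ramify in $K$, namely $p$ and $q$. By genus theory the $2$-rank of $\Cl_K$ is $2-1=1$, so $H^1(X,\ZZ/2\ZZ)\cong\ZZ/2\ZZ$ and $K$ has a unique unramified quadratic extension $L$. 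I would identify $L$ with the genus field $L=\mathbb{Q}(\sqrt{p},\sqrt{-q})$: it contains a square root of $-pq$, hence contains $K$, it has degree $2$ over $K$, and it is unramified over $K$. Thus the nonzero class $x\in H^1(X,\ZZ/2\ZZ)$ is the one attached to $L=K(\sqrt c)$ with $c=p$.

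Next I would run the criterion of Theorem \ref{thm:cupstr} with $x=y$. Because $p$ ramifies in $K$ we have $(p)=\mathfrak p^2$ in $\mathcal{O}_K$, so $\divis(c)/2=\mathfrak p$: a single prime with exponent $1$. I would then check that $\mathfrak p$ is inert in $L$. Since $\left(\frac{q}{p}\right)=-1$ and $p\equiv 1\pmod 4$ (so $\left(\frac{-1}{p}\right)=1$), we get $\left(\frac{-q}{p}\right)=-1$, i.e.\ $\mathbb{Q}_p(\sqrt{-q})/\mathbb{Q}_p$ is the unramified quadratic extension; together with the ramified extension $\mathbb{Q}_p(\sqrt{p})$ this shows that $L\otimes_{\mathbb{Q}}\mathbb{Q}_p$ has $e=2$ and $f=2$ over $\mathbb{Q}_p$, while $K\otimes_{\mathbb{Q}}\mathbb{Q}_p=\mathbb{Q}_p(\sqrt{-pq})$ is ramified with residue degree $1$; multiplicativity of $e$ and $f$ then forces the unique prime of $L$ above $\mathfrak p$ to have residue degree $2$, so $\mathfrak p$ is inert (in particular unramified) in $L$. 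Hence $\sum_{\mathfrak p_i \text{ inert in }L}e_i=1\equiv 1\pmod 2$, so Theorem \ref{thm:cupstr} gives $x\cup x\cup x\neq 0$ in $H^3(X,\ZZ/2\ZZ)$; equivalently, the hypothesis of Theorem \ref{thm:nosolution} holds for the (unique) unramified quadratic extension of $K$.

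Then I would conclude as follows. The tautological map $k\colon X_{et}\to B\Gamma_K^{ur}$ is an isomorphism on $H^1$ with $\ZZ/2\ZZ$-coefficients and a ring homomorphism on cohomology, so if $a\in H^1(\Gamma_K^{ur},\ZZ/2\ZZ)$ is the unique nonzero class (equivalently, the class of the unique surjection $\Gamma_K^{ur}\to\ZZ/2\ZZ$) then $k^*(a^3)=x^3\neq 0$, whence $a^3\neq 0$; thus $\Gamma_K^{ur}$ has property $(\ast\ast)_a$. Theorem \ref{thm:nosolution} now shows that no finite group with property $(\ast)_a$ — in particular no $M(q^2)$, by Proposition \ref{prop:mproperty} — occurs as the Galois group of an unramified extension of $K$. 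For the final assertion, $G^{solv}\cong\ZZ/2\ZZ$ (for $G=M(q^2)$ because $\PSL(2,q^2)$ is a non-abelian simple, hence perfect, normal subgroup of index $2$, so the derived series stabilizes there), and $\ZZ/2\ZZ$ is realized as an unramified Galois group over $K$ by the genus field extension $L/K$, since $\Cl_K$ has even order by the first step.

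I expect the main obstacle to be the first step, namely computing $\Cl_K/2\Cl_K$ correctly and pinning down $L$ as the genus field, together with the local computation showing that $\mathfrak p$ is inert in $L$, which is precisely where the hypotheses $p\equiv 1\pmod 4$ and $\left(\frac{q}{p}\right)=-1$ enter; everything else is formal once Theorems \ref{thm:cupstr} and \ref{thm:nosolution} are available.
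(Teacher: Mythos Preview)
Your proof is correct and follows essentially the same route as the paper's: verify the hypothesis of Theorem~\ref{thm:nosolution} by identifying the unique unramified quadratic extension as the genus field $L=\mathbb{Q}(\sqrt{p},\sqrt{-q})$ and checking that $\divis(p)/2=\mathfrak p$ is inert in $L$. The only differences are cosmetic: the paper cites a reference for the determination of $L$ where you invoke genus theory directly, and the paper reduces the inertness of $\mathfrak p$ to ``$p$ is inert in $\mathbb{Q}(\sqrt{-q})$'' in one line, whereas you give the full local $e,f$ bookkeeping; both arguments use the same hypothesis $\bigl(\tfrac{q}{p}\bigr)=-1$ combined with $p\equiv 1\pmod 4$ to get $\bigl(\tfrac{-q}{p}\bigr)=-1$.
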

\begin{proof}
All we need to show is that the conditions of Theorem \ref{thm:nosolution} are satisfied. By \cite{SpearmanUnramified}, there is a unique unramified quadratic extension of $K,$ given by $L = K(\sqrt{p}) = \mathbb{Q}(\sqrt{p},\sqrt{-q}).$ We must prove that  $\divis(p)/2 = \mathfrak{p}$ is inert in $L.$  One easily sees that this is the same as saying that $p$ is inert in $\mathbb{Q}(\sqrt{-q}),$ i.e that $p$ is not a square mod $q,$ which is guaranteed by our assumptions.
\end{proof}
\begin{corollary}\label{prop:unramifiedQuadraticCup1}
Let $p_1,p_2,p_3$ be three primes such that $$p_1 p_2 p_3 \equiv 3 \mod 4,$$  and $$\genfrac(){}{0}{p_i}{p_j} = -1$$ for all $i \neq j.$  Let $K = \mathbb{Q}(\sqrt{-p_1p_2p_3})$  and $G$ be a finite group satisfying property $(\ast)_b$ (for example $\Aut(\PSL(2,q^2)).$  Then there does not exist an unramified Galois extension $L/K$ with $G= \Gal(L/K),$ but the maximal solvable quotient $G^{solv} \cong \ZZ/2\ZZ \oplus \ZZ/2\ZZ$ is realizable as an unramified Galois group over $K.$ 
\end{corollary}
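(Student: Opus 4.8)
\section*{Proof proposal}

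The plan is to show that $K=\mathbb{Q}(\sqrt{-p_1p_2p_3})$ satisfies the hypotheses of Theorem \ref{thm:nosolution2}, in close parallel with the proof of Corollary \ref{prop:unramifiedQuadraticCup}. Granting that, the non-realizability of $G$ is immediate, and realizability of the maximal solvable quotient will come from genus theory.

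I would first identify the unramified quadratic extensions of $K$. Since $p_1p_2p_3\equiv 3\pmod 4$, the discriminant of $K$ is $d_K=-p_1p_2p_3$; writing $p_i^{\ast}=(-1)^{(p_i-1)/2}p_i$ for the prime discriminant attached to $p_i$, one has $d_K=p_1^{\ast}p_2^{\ast}p_3^{\ast}$, the signs combining correctly precisely because $p_1p_2p_3\equiv 3\pmod 4$ forces an odd number of the $p_i$ to be $\equiv 3\pmod 4$. Genus theory then yields that $\Cl(K)$ has $2$-rank $3-1=2$, so $H^1(\Gamma_K^{ur},\ZZ/2\ZZ)\cong(\ZZ/2\ZZ)^2$ and the three unramified quadratic extensions of $K$ are $L_i=K(\sqrt{p_i^{\ast}})$, $i=1,2,3$, with common compositum the genus field $\mathbb{Q}(\sqrt{p_1^{\ast}},\sqrt{p_2^{\ast}},\sqrt{p_3^{\ast}})$. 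That each $L_i/K$ is unramified at every finite prime uses $p_i^{\ast}\equiv 1\pmod 4$ (at $2$) and the ramification of every $p_j$ in $K$ (at the odd primes); the archimedean place is harmless as $K$ is complex.

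Next I would check the arithmetic condition of Theorem \ref{thm:nosolution2} for an arbitrary ordered pair of distinct such extensions, say $L=L_i=K(\sqrt c)$ with $c=p_i^{\ast}$ and $M=L_j=K(\sqrt d)$ with $d=p_j^{\ast}$, $i\ne j$. Since $p_j$ ramifies in $K$ we have $p_j\OO_K=\mathfrak p_j^2$, so $\divis(d)/2=\mathfrak p_j$ is a single prime ideal occurring to the first power; and since $L/K$ is everywhere unramified, $\mathfrak p_j$ cannot ramify in $L$. Hence the sum appearing in Theorem \ref{thm:nosolution2} for this pair -- equivalently the criterion of Theorem \ref{thm:cupstr} applied to $L$ and $M$ -- is odd if and only if $\mathfrak p_j$ is inert in $L$. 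As $\mathfrak p_j\nmid 2p_i^{\ast}$, this happens exactly when $p_i^{\ast}$ is a non-square in $\OO_K/\mathfrak p_j\cong\mathbb{F}_{p_j}$, i.e. when $\genfrac(){}{0}{p_i^{\ast}}{p_j}=-1$; and by the prime-discriminant form of quadratic reciprocity $\genfrac(){}{0}{p_i^{\ast}}{p_j}=\genfrac(){}{0}{p_j}{p_i}=-1$ by hypothesis. Running over all ordered pairs $i\ne j$ exhausts all pairs of distinct unramified quadratic extensions, so the hypotheses of Theorem \ref{thm:nosolution2} hold and no $G$ with property $(\ast)_b$ -- in particular $\Aut(\PSL(2,q^2))$ -- is the Galois group of an unramified extension of $K$. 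For the converse statement, the genus field is an everywhere unramified $(\ZZ/2\ZZ)^2$-extension of $K$, and $\Aut(\PSL(2,q^2))^{solv}=\Out(\PSL(2,q^2))\cong(\ZZ/2\ZZ)^2$ because $\PSL(2,q^2)$ is perfect, so the maximal solvable quotient is realized.

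None of this is deep. The one place deserving care is the setup in the first step: verifying $d_K=\prod_i p_i^{\ast}$ and the everywhere-unramifiedness of the $L_i$, and noting that the hypothesis $\genfrac(){}{0}{p_i}{p_j}=-1$ for all $i\ne j$ is compatible with quadratic reciprocity only when exactly one $p_i$ is $\equiv 3\pmod 4$ -- which is automatic from $p_1p_2p_3\equiv 3\pmod 4$. Once that is in place, the reciprocity computation in the second step is a single line, and the corollary follows from Theorem \ref{thm:nosolution2}.
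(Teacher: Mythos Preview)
Your proposal is correct and follows essentially the same route as the paper: identify the unramified quadratic extensions of $K$ as $K(\sqrt{p_i^{\ast}})$ via genus theory (the paper cites a reference for this), then verify the inertness condition of Theorem~\ref{thm:nosolution2} for each ordered pair via a Legendre-symbol computation. Your argument is somewhat more explicit---you work directly in the residue field $\OO_K/\mathfrak p_j\cong\mathbb F_{p_j}$ and invoke the identity $\bigl(\tfrac{p_i^{\ast}}{p_j}\bigr)=\bigl(\tfrac{p_j}{p_i}\bigr)$, whereas the paper phrases the same step as ``$p_j$ is inert in $\mathbb Q(\sqrt{p_i^{\ast}})$''---and you also spell out the realization of the solvable quotient via the genus field, which the paper leaves implicit; but the substance is the same.
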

\begin{proof}
For notational purposes, if $p$ is a prime, we let $p^* = (-1)^{(p-1)/2}p.$ We must show that the conditions in Theorem \ref{thm:nosolution2} are satisfied. We proceed as in Corollary \ref{prop:unramifiedQuadraticCup}. Note that by \cite{SpearmanUnramified}, the unramified quadratic extensions of $K$ are given by adjoining a square root of $p_i^*$ for $i=1,2,3.$ Thus given two distinct unramified quadratic extensions $L = K(\sqrt{p_i^*}), M = K(\sqrt{p_j^*}),$ we must show that $\divis(p_j)/2 = \mathfrak{q}_j$ is inert in $L.$ One easily shows that this is the same as saying that $p_j$ is inert in $\mathbb{Q}(\sqrt{p_i^*}),$ and this follows from our congruence conditions.
\end{proof}
For proving Theorem \ref{thm:nosolution} and Theorem \ref{thm:nosolution2} we will need the following proposition, which we prove in Section \ref{section:cohomologyring}. Before stating it, recall that if we have a scheme $X$ and an element $$x \in H^1(X,\ZZ/2\ZZ)$$ then $x$ can be represented by a $\ZZ/2\ZZ$-torsor $p:Y \rightarrow X.$ If $$X = \Spec \mathcal{O}_K$$ is the ring of integers of a number field $K,$ then such a $\ZZ/2\ZZ$-torsor $p:Y \rightarrow X$ can be represented by a scheme $Y = \Spec \mathcal{O}_L$ where $$L= K(\sqrt{c}), \ \ c \in K^* \setminus (K^*)^2$$ and $L/K$ is unramified. 
\begin{restatable}{prop}{triplecup} \label{prop:triplecup}
Let $X = \Spec \mathcal{O}_K$ where $K$ is a totally imaginary number field and let $x$ and $y$ be elements in $H^1(X,\ZZ/2\ZZ),$ corresponding to the unramified quadratic extensions $L= K(\sqrt{c}), M= K(\sqrt{d}),$ where $c,d \in K^*.$  Let $$\divis(d)/2 = \prod_i \mathfrak{p}_i^{e_i}$$ be the factorization of $\divis(d)/2$ into prime ideals in $\mathcal{O}_K.$ Then $x \cup x \cup y \in H^3(X,\ZZ/2\ZZ)$ is non-zero if and only if $$\sum_{\mathfrak{p}_i \text{ inert in } L} e_i \equiv 1 \pmod{2}.$$ 
\end{restatable}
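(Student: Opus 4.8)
The plan is to deduce the statement from Artin--Verdier duality for $X=\Spec\mathcal O_K$ together with the explicit, number-theoretic description of $\Ext^2_X(\ZZ/n\ZZ,\mathbb{G}_{m,X})$ obtained in Section~\ref{section:lemmasonetale}, by transferring the two cup factors across the duality pairing so that the question becomes whether $x$, regarded as a character of $\Cl(K)$, is nonzero on a certain ideal class.

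First I would set up the duality. Since $K$ is totally imaginary there is no archimedean correction, so Artin--Verdier duality provides perfect pairings of finite groups $H^i(X,\ZZ/2\ZZ)\times\Ext^{3-i}_X(\ZZ/2\ZZ,\mathbb{G}_{m,X})\to H^3(X,\mathbb{G}_{m,X})\cong\mathbb{Q}/\ZZ$. In particular $H^3(X,\ZZ/2\ZZ)\cong\ZZ/2\ZZ$, detected by the pairing against the generator $\varphi_0\in\Ext^0_X(\ZZ/2\ZZ,\mathbb{G}_{m,X})=\mu_2(\mathcal O_K)$, namely the Kummer inclusion $\ZZ/2\ZZ=\mu_2\hookrightarrow\mathbb{G}_{m,X}$. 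Likewise $H^1(X,\ZZ/2\ZZ)\cong\Hom(\Cl(K),\ZZ/2\ZZ)$, its Artin--Verdier partner being $\Ext^2_X(\ZZ/2\ZZ,\mathbb{G}_{m,X})\cong\Cl(K)/2$ (using $\operatorname{Br}(X)=0$), and under these identifications the pairing is the evaluation pairing of class field theory: the element $x$ corresponds to the character cutting out $L=K(\sqrt c)$, so for a prime $\mathfrak p$ of $K$ the value $x([\mathfrak p])\in\ZZ/2\ZZ$ is the Artin symbol of $\mathfrak p$ in $\Gal(L/K)$, equal to $0$ if $\mathfrak p$ splits in $L$ and to $1$ if $\mathfrak p$ is inert in $L$ (these being the only options, as $L/K$ is unramified).

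Next, since $x\cup x=\beta(x)$ is the mod $2$ Bockstein of $x$ (the \'etale avatar of the identity $\mathrm{Sq}^1 x=x^2$ in degree one) and cup products are compatible with the Artin--Verdier pairing, I would move the two cup factors across the pairing and rewrite $\langle x\cup x\cup y,\varphi_0\rangle$ as $x([\divis(d)/2])=\sum_i e_i\,x([\mathfrak p_i])\in\ZZ/2\ZZ$. The substance of this step is that, under the number-theoretic description of $\Ext^1_X(\ZZ/2\ZZ,\mathbb{G}_{m,X})$ and $\Ext^2_X(\ZZ/2\ZZ,\mathbb{G}_{m,X})\cong\Cl(K)/2$ from Section~\ref{section:lemmasonetale}, the class through which $y$ enters the computation is that of the ideal $\divis(d)/2=\prod_i\mathfrak p_i^{e_i}$ in $\Cl(K)/2$. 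Because $H^3(X,\ZZ/2\ZZ)\cong\ZZ/2\ZZ$ is detected by $\varphi_0$, and because $x([\mathfrak p_i])=1$ precisely when $\mathfrak p_i$ is inert in $L$, this gives $x\cup x\cup y\neq 0$ if and only if $\sum_{\mathfrak p_i\text{ inert in }L}e_i\equiv 1\pmod 2$, which is the assertion.

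The hardest step is the computation behind the equality $\langle x\cup x\cup y,\varphi_0\rangle=x([\divis(d)/2])$: one must identify the Bockstein class $x\cup x=\beta(x)\in H^2(X,\ZZ/2\ZZ)$ --- equivalently, make the cup-product pairing $H^2(X,\ZZ/2\ZZ)\times H^1(X,\ZZ/2\ZZ)\to H^3(X,\ZZ/2\ZZ)$ explicit --- in the arithmetic terms provided by Artin--Verdier duality, and carefully account for the primes above $2$, at which $\mathbb{G}_{m,X}$ is not $2$-divisible as an \'etale sheaf on $X$ and the Kummer sequence is not exact; this is exactly where the refined description of $\Ext^2_X(\ZZ/n\ZZ,\mathbb{G}_{m,X})$ from Section~\ref{section:lemmasonetale} is needed. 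Granting this, together with the standard compatibility of Artin--Verdier duality with the class field theory isomorphism $\pi_1^{\mathrm{ab}}(X)\cong\Cl(K)$, the rest of the argument is formal.
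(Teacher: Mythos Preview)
Your overall strategy---pair $x\cup x\cup y$ against the generator of $\Ext^0_X(\ZZ/2\ZZ,\mathbb{G}_{m,X})=\mu_2(K)$ via Artin--Verdier duality, and then recognize the result as the Artin symbol $x([\divis(d)/2])$---is exactly the paper's. The final class-field-theoretic step (Artin reciprocity plus the fact that $x$ vanishes on $N_{L/K}(\Cl(L))$) is also identical. So at the level of architecture you are aligned with the paper.

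The gap is precisely the step you flag as ``hardest'' and then grant: the identification $\langle x\cup x\cup y,\varphi_0\rangle=x([\divis(d)/2])$. This is not a formality, and the paper does not obtain it via the Bockstein identity $x\cup x=\beta(x)$ or via any Kummer-sequence argument (so your worry about primes above $2$ and the failure of Kummer exactness is a red herring here). Instead, the paper works entirely on the dual side. Using Lemma~\ref{dualcup}, cup product with $x$ corresponds to a map
\[
c_x^\sim:\Ext^{i}_X(\ZZ/2\ZZ,\mathbb{G}_{m,X})\longrightarrow \Ext^{i+1}_X(\ZZ/2\ZZ,\mathbb{G}_{m,X}),
\]
and this map is computed explicitly for $i=0,1$ in Corollaries~\ref{cor:cap1} and~\ref{cor:cap2} by resolving $\ZZ/2\ZZ_X$ by $\mathcal{E}_2=(\ZZ_X\xrightarrow{\cdot 2}\ZZ_X)$, resolving $\mathbb{G}_{m,X}$ by $\mathcal{C}^\bullet=(j_*\mathbb{G}_{m,K}\xrightarrow{\divis}\DIV X)$, lifting the connecting map $\delta_x$ to a concrete zig-zag on $\Cone(\hat u)$, and then chasing cocycles. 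The outputs are: $c_x^\sim(-1)=(c^{-1},\mathfrak{c})\in\Ext^1$ (Corollary~\ref{cor:cap1}) and $c_x^\sim(b,\mathfrak b)=\mathfrak b+N_{L/K}(\mathfrak b')\in\Pic(X)/2$ (Corollary~\ref{cor:cap2}). These feed into Propositions~\ref{prop:cupgen} and~\ref{prop:doublecup}, and the proof of Proposition~\ref{prop:triplecup} is then a two-line deduction: $x\cup x\cup y\neq 0$ iff $\langle c_x(x),(d,\divis(d)/2)\rangle\neq 0$ iff $\langle x,\divis(d)/2\rangle\neq 0$, the last step using that $\langle x,N_{L/K}(\mathfrak b')\rangle=0$ since $\ker x=N_{L/K}(\Cl(L))$.

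In short: your sketch is correct but incomplete, and the substance you defer is exactly the content of Section~\ref{section:cohomologyring}. If you want to complete your version, you either have to reproduce those explicit $c_x^\sim$ computations, or else make the Bockstein route genuinely do work---which would still require identifying how $\beta$ interacts with the Artin--Verdier isomorphism and the concrete model $\Ext^1_X(\ZZ/2\ZZ,\mathbb{G}_{m,X})\cong Z_1/B_1$ of Corollary~\ref{cor:extdescrip}, a computation of comparable difficulty.
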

\begin{proof}[Proof of Theorem \ref{thm:nosolution} and Theorem \ref{thm:nosolution2}]
We prove Theorem \ref{thm:nosolution}, the proof of Theorem \ref{thm:nosolution2} uses exactly the same methods. We need to show, by \ref{cor:nosurjections}, that $\Gamma_K^{ur}$ satisfies property $(\ast \ast)_a.$ We prove thus that $$a \cup a \cup a \neq 0$$ for any $a \in H^1(\Gamma_K^{ur},\ZZ/2\ZZ).$ Let $X = \Spec \mathcal{O}_K$ and consider the canonical geometric morphism $$k:X_{et} \rightarrow B \pi_1(X) = B \Gamma_K^{ur}$$ between the étale site of $X$ and the classifying site of $\Gamma_K^{ur}.$ Since $$H^1(\Gamma_K^{ur},\ZZ/2\ZZ) \cong H^1(X_{et},\ZZ/2\ZZ)$$ we see that for $a \cup a \cup a \neq 0,$ it is enough that $k^*(a \cup a \cup a )$ is non-zero.  By $k^*$ defining a ring homomorphism, $k^*(a \cup a \cup a)$ is the same as the triple cup product of a non-zero element of $H^1(X,\ZZ/2\ZZ),$ which we know is non-zero by Proposition \ref{prop:triplecup}. 
\end{proof}
In proving Theorem \ref{thm:nosolution} and  Theorem \ref{thm:nosolution2} above, we used Proposition \ref{prop:triplecup} in a crucial way. The following sections are dedicated to determining the ring structure of $H^*(X,\ZZ/2\ZZ)$ where $X = \Spec \mathcal{O}_K.$ 
\section{The cohomology groups of a totally imaginary number field}\label{section:lemmasonetale}
\noindent In the remarkable paper \cite{MazurNotes}, Mazur investigated the étale cohomology of number fields and proved several seminal results. We start by recalling some of these results.  Let $$X = \Spec \mathcal{O}_K$$ be the ring of integers of a totally imaginary number field $K, \mathbb{G}_{m,X}$ the sheaf of units on $X$ and $F$ be any constructible sheaf. Denote by $^\sim$ the functor $$\RHom_{\Ab}(-,\mathbb{Q}/\ZZ): \Drv(\Ab)^{op} \rightarrow \Drv(\Ab). $$ We now let $$A:R\Gamma(X,F) \rightarrow \RHom_X(\ZZ/2\ZZ_X,\mathbb{G}_{m,X})^\sim[3]$$ be the map adjoint to the composition $$\RHom_X(\ZZ/2\ZZ_X,\mathbb{G}_{m,X}) \times \RHom_X(\ZZ_X,\ZZ/2\ZZ_X) \rightarrow \RHom_X(\ZZ_X,\mathbb{G}_{m,X})$$ followed by the trace map $\RHom_X(\ZZ_X,\mathbb{G}_{m,X}) \rightarrow \mathbb{Q}/\ZZ[-3].$  Artin-Verdier duality then immediately shows that $A$ is an isomorphism in $\Drv(\Ab).$  This pairing satisfies the following compatability condition.
\begin{lemma}\label{dualcup}
Let $X = \Spec \mathcal{O}_K$ be the ring of integers of a totally imaginary number field and $f: F \rightarrow G$ be a morphism in $\Drv(X_{et})$ between complexes of constructible sheaves. Then the map $$R\Gamma(X,F) \xrightarrow{R\Gamma(X,f)} R\Gamma(X,G)$$ in $\Drv(\Ab),$ corresponds under Artin-Verdier duality to $$\RHom_X(F,\mathbb{G}_{m,X})^\sim[3] \xrightarrow{\RHom_X(f,\mathbb{G}_{m,X})^\sim[3]} \RHom_X(G,\mathbb{G}_{m,X})^\sim[3].$$
\end{lemma}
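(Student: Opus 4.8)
The claim is precisely the naturality of the Artin--Verdier pairing $A$ in its sheaf argument, so the plan is to unwind the construction of $A$ and check that every ingredient entering it is functorial. For a complex of constructible sheaves $F$, write $A_F\colon R\Gamma(X,F)\to\RHom_X(F,\mathbb{G}_{m,X})^{\sim}[3]$ for the duality map. By construction it is the image, under the tensor--hom adjunction (together with the $(-)^{\sim}$-biduality built into the formalism), of the pairing
$$P_F\;=\;t\circ\mu_F\colon\ \RHom_X(F,\mathbb{G}_{m,X})\otimes^{L}R\Gamma(X,F)\longrightarrow\mathbb{Q}/\ZZ[-3],$$
where $\mu_F\colon\RHom_X(F,\mathbb{G}_{m,X})\otimes^{L}R\Gamma(X,F)\to R\Gamma(X,\mathbb{G}_{m,X})$ is Yoneda composition --- recall $R\Gamma(X,F)=\RHom_X(\ZZ_X,F)$, so $\mu_F$ sends $\varphi\otimes\psi$ to $\varphi\circ\psi$ --- and $t\colon R\Gamma(X,\mathbb{G}_{m,X})\to\mathbb{Q}/\ZZ[-3]$ is the fixed Artin--Verdier trace, which does not involve $F$. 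The essential point is that $A_F$ is produced from $P_F$ by a recipe that is itself natural in $F$.

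First I would record the naturality of $\mu$. Given $f\colon F\to G$ in $\Drv(X_{et})$, associativity of composition in the closed monoidal category $\Drv(X_{et})$ gives $\varphi\circ(f\circ\psi)=(\varphi\circ f)\circ\psi$ for $\psi\in\RHom_X(\ZZ_X,F)$ and $\varphi\in\RHom_X(G,\mathbb{G}_{m,X})$; this is exactly the commutativity expressed by
$$\mu_G\circ\bigl(\mathrm{id}\otimes^{L}R\Gamma(X,f)\bigr)\;=\;\mu_F\circ\bigl(\RHom_X(f,\mathbb{G}_{m,X})\otimes^{L}\mathrm{id}\bigr)$$
as maps $\RHom_X(G,\mathbb{G}_{m,X})\otimes^{L}R\Gamma(X,F)\to R\Gamma(X,\mathbb{G}_{m,X})$. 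Post-composing with the $F$-independent trace $t$ yields the same identity with every $\mu$ replaced by the corresponding $P$.

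Next I would transport this identity through the tensor--hom adjunction, which is natural in both variables, and the biduality isomorphism, which is natural; this converts the displayed equality of pairings into the commuting square
$$\RHom_X(f,\mathbb{G}_{m,X})^{\sim}[3]\circ A_F\;=\;A_G\circ R\Gamma(X,f).$$
Since Artin--Verdier duality asserts that $A_F$ and $A_G$ are isomorphisms, this is exactly the statement that $R\Gamma(X,f)$ corresponds to $\RHom_X(f,\mathbb{G}_{m,X})^{\sim}[3]$ under the duality, as claimed.

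The main obstacle is not conceptual but a matter of making the two formal facts used above --- associativity of Yoneda composition and naturality of the tensor--hom adjunction --- precise at the level of complexes, since a morphism $f$ in $\Drv(X_{et})$ need not be represented by an honest map of sheaf complexes. The standard remedies apply: either compute $R\Gamma$ and $\RHom_X(-,\mathbb{G}_{m,X})$ using functorial $K$-injective resolutions, so that $\mu$ becomes a strictly associative pairing of complexes and the adjunction is realized strictly, or pass to the stable $\infty$-categorical enhancement of $\Drv(X_{et})$, whose closed symmetric monoidal structure provides both coherences at once. The properly arithmetic content --- the existence of the trace $t$ and the fact that $A$ is an isomorphism --- is Artin--Verdier duality, which we take for granted; the compatibility asserted by the lemma is then formal.
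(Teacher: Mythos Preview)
Your proposal is correct and follows essentially the same approach as the paper: both reduce the claimed square to the dinaturality of Yoneda composition $\RHom_X(F,\mathbb{G}_{m,X})\times\RHom_X(\ZZ_X,F)\to\RHom_X(\ZZ_X,\mathbb{G}_{m,X})$ in $F$, i.e.\ to associativity of composition, followed by the fixed trace. The paper's proof is simply a terser version of yours, omitting your explicit discussion of the tensor--hom adjunction and the homotopy-coherence caveats.
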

\begin{proof}
Our claim is that the diagram $$\begin{tikzcd}[column sep= 15ex] R\Gamma(X,F) \arrow[r,"{R\Gamma(X,f)}"] \arrow[d,"A"] &  R\Gamma(X,G)  \arrow[d,"A"] \\ \RHom_X(F,\mathbb{G}_{m,X})^\sim[3] \arrow[r,"{\RHom_X(f,\mathbb{G}_{m,X})}"] & \RHom_X(G,\mathbb{G}_{m,X})^\sim[3] \end{tikzcd}$$ is commutative. This follows from the fact that the following diagram is commutative in the obvious way$$\begin{tikzcd}[column sep = small]  \arrow[d,shift left = 10ex,"{\RHom_X(\ZZ_X, f)}",swap]\RHom_X(F,\mathbb{G}_{m,X})  \times  \RHom_X(\ZZ_X,F)  \arrow[r] & \RHom_X(\ZZ_X, \mathbb{G}_{m,X})  \arrow[d,equals] \\ \RHom_X(G,\mathbb{G}_{m,X}) \times \RHom_X(\ZZ_X,G) \arrow[u,shift right = -10 ex,"{\RHom_X(f,\mathbb{G}_{m,X})}"]  \arrow[r] & \RHom_X(\ZZ_X,\mathbb{G}_{m,X}) \end{tikzcd}$$ where the horizontal arrows are given by composition.
\end{proof}
Using the Artin-Verdier pairing one can calculate $H^i(X,\ZZ/n\ZZ)$ by first determining $\Ext^i_X(\ZZ/n\ZZ,\mathbb{G}_{m,X})$ (see \cite{MazurNotes}, but also Lemma \ref{cor:extdescrip}) and then by duality we get
$$H^i(X,\ZZ/n\ZZ) = \begin{cases} \ZZ/n\ZZ & \mbox{ if } i= 0 \\ 
 (\Pic(X)/n)^\sim & \mbox{ if } i= 1 \\
 \Ext^1_X(\ZZ/n\ZZ,\mathbb{G}_{m,X})^\sim & \mbox{ if } i=2 \\
 \mu_n(K)^\sim & \mbox{ if } i=3 \\
 0 &  \mbox{ if } i>3 
 \end{cases}$$
where $^\sim$ denotes the Pontryagin dual of the corresponding group and $\mu_n(K)$ is the $n$th roots of unity in $K.$ For our purposes, we will need a more concrete description of $\Ext^1_X(\ZZ/n\ZZ, \mathbb{G}_{m,X}).$ Define the étale sheaf $$\DIV(X) = \oplus_p \ZZ_{/p} $$  on $X,$  where $p$ ranges over all closed points of $X$ and $\ZZ_{/p}$ denotes the skyscraper sheaf at that point. Note that $\Div X,$ the global sections of $\DIV X,$  is the ordinary free abelian group on the set of closed points of $X = \Spec \mathcal{O}_K.$ Let $j: \Spec K \rightarrow X$ be the canonical map induced from the inclusion $\mathcal{O}_K \rightarrow K.$  On $X_{et}$ (resp. $(\Spec K)_{et})$ we have the multiplicative group sheaf $\mathbb{G}_{m,X}$ (resp. $\mathbb{G}_{m,K}).$  Define the complex $\mathcal{C}^\bullet$ of étale sheaves on $X$ as $$ j_\ast \mathbb{G}_{m,K} \xrightarrow{\divis} \DIV X \rightarrow 0$$ (with $j_\ast \mathbb{G}_{m,K}$ in degree $0$ and the map $\divis$ as in \cite{MilneEtale}, II 3.9) and $\mathcal{E}^\bullet_n$ as the complex $$\ZZ \xrightarrow{\cdot n} \ZZ$$ of constant sheaves, with non-zero terms in degrees $-1$ and $0.$  Note that the obvious maps of complexes $$\mathbb{G}_{m,X} \rightarrow \mathcal{C}^\bullet$$ and $$\mathcal{E}^\bullet_n \rightarrow \ZZ/n\ZZ$$ are quasi-isomorphisms. Consider the complex $\HOM(\mathcal{E}^\bullet_n,\mathcal{C}^\bullet), $ which written out in components  is $$j_* \mathbb{G}_{m,K} \xrightarrow{(\cdot n,-\divis)} j_* \mathbb{G}_{m,K} \oplus \DIV X \xrightarrow{\divis+ \cdot n} \DIV X$$ where the first map multiplication by $n^{-1}$ on the first factor and $\divis$ on the second factor. The last map is the sum of multiplication by $\divis$ and multiplication by $n.$  We have that $$\HOM(\mathcal{E}^\bullet_n,\mathcal{C}^\bullet) \cong R \HOM(\ZZ/n\ZZ,\mathbb{G}_{m,X})$$ in $\Drv(\Sh(X_{et})),$ the derived category of $\Sh(X_{et})$ since $\mathcal{E}^\bullet_n$ is a locally free complex of abelian sheaves.
We will now use the hypercohomology spectral sequence for computing $$H^i(R \HOM (\ZZ/n\ZZ , \mathbb{G}_{m,X})) \cong \Ext^i_X(\ZZ/n\ZZ,\mathbb{G}_{m,X}).$$  Since $$R \Gamma(R \HOM(\ZZ/n\ZZ,\mathbb{G}_{m,X})) = R\Hom(\ZZ/n\ZZ,\mathbb{G}_{m,X})$$ the natural transformation $\Gamma \rightarrow R \Gamma$ induces the map $$\Gamma(X,\HOM(\mathcal{E}_n^\bullet,\mathcal{C}^\bullet)) \rightarrow R\Hom(\ZZ/n\ZZ,\mathbb{G}_{m,X})$$ in $\Drv(\Ab).$ Because $\Gamma(X,\Hom(\mathcal{E}_n^\bullet,\mathcal{C}^\bullet))$ is $2$-truncated, the map  $$\Gamma(X,\HOM(\mathcal{E}_n^\bullet,\mathcal{C}^\bullet)) \rightarrow R\Hom(\ZZ/n\ZZ,\mathbb{G}_{m,X})$$ factors through the $2$-truncation $$ \tau_{\leq 2} (\RHom_X(\ZZ/n\ZZ,\mathbb{G}_{m,X}))$$ of $\RHom_X(\ZZ/n\ZZ,\mathbb{G}_{m,X}).$
\begin{lemma} \label{quasiext}
Let $\HOM(\mathcal{E}^\bullet_n,\mathcal{C}^\bullet)$ be as above. Then the natural map  $$\Gamma(X,\HOM(\mathcal{E}_n^\bullet,\mathcal{C}^\bullet)) \rightarrow \tau_{\leq 2} (\RHom_X(\ZZ/n\ZZ,\mathbb{G}_{m,X}))$$ is an isomorphism in $\Drv(\Ab).$
\end{lemma}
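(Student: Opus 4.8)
The plan is to use the hypercohomology spectral sequence computing $H^*(\RHom_X(\ZZ/n\ZZ,\mathbb{G}_{m,X}))$ and to show that the three-term complex $\Gamma(X,\HOM(\mathcal{E}_n^\bullet,\mathcal{C}^\bullet))$ already captures the $\tau_{\leq 2}$-part of $\RHom_X(\ZZ/n\ZZ,\mathbb{G}_{m,X})$ on the nose. Concretely, one has the quasi-isomorphism $\HOM(\mathcal{E}_n^\bullet,\mathcal{C}^\bullet)\cong R\HOM(\ZZ/n\ZZ,\mathbb{G}_{m,X})$ of complexes of sheaves on $X_{et}$ (established above, using that $\mathcal{E}_n^\bullet$ is a locally free resolution of $\ZZ/n\ZZ$). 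Applying $R\Gamma(X,-)$ to both sides gives $R\Gamma(X,\HOM(\mathcal{E}_n^\bullet,\mathcal{C}^\bullet))\cong \RHom_X(\ZZ/n\ZZ,\mathbb{G}_{m,X})$, so the content of the lemma is that passing from $R\Gamma(X,\HOM(\mathcal{E}_n^\bullet,\mathcal{C}^\bullet))$ to $\Gamma(X,\HOM(\mathcal{E}_n^\bullet,\mathcal{C}^\bullet))$ loses nothing in degrees $\leq 2$. This is a statement about the hypercohomology spectral sequence $E_1^{p,q}=H^q(X,\mathcal{H}^p)\Rightarrow H^{p+q}(R\Gamma(X,\HOM(\mathcal{E}_n^\bullet,\mathcal{C}^\bullet)))$ where $\mathcal{H}^p$ denotes the $p$-th term of the complex $\HOM(\mathcal{E}_n^\bullet,\mathcal{C}^\bullet)$ (which is a genuine complex of sheaves, not its cohomology sheaves — so I would instead run the spectral sequence with $E_0$-page given by the Godement/injective resolutions of each term and $E_1^{p,q}=H^q(X,\mathcal{C}^p)$ where $\mathcal{C}^\bullet$ here abusively denotes the three displayed terms).

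The key computational input is the cohomology of the three sheaves appearing in $\HOM(\mathcal{E}_n^\bullet,\mathcal{C}^\bullet)$, namely $j_*\mathbb{G}_{m,K}$ (in degrees $-1$ and $0$) and $\DIV X$ (in degree $0$). First I would record that $\DIV X=\oplus_{\mathfrak p}\ZZ_{/\mathfrak p}$ is a direct sum of skyscraper sheaves, each of which is flasque, hence $H^q(X,\DIV X)=0$ for $q>0$ and $H^0(X,\DIV X)=\Div X$. Next, for $j_*\mathbb{G}_{m,K}$ one uses the Leray spectral sequence for $j:\Spec K\to X$ together with the fact (Hilbert 90 / triviality of higher direct images away from the generic point, as in Milne) that $R^qj_*\mathbb{G}_{m,K}$ vanishes for $q>0$, so $H^q(X,j_*\mathbb{G}_{m,K})=H^q(\Spec K,\mathbb{G}_{m,K})=H^q(K,\mathbb{G}_{m})$; this is $K^*$ for $q=0$, $0$ for $q=1$ (Hilbert 90), $\mathrm{Br}(K)$ for $q=2$, and for a totally imaginary field $\mathrm{Br}(K)$ sits in the fundamental exact sequence with $\oplus_v\mathrm{Br}(K_v)\to\mathbb{Q}/\ZZ$. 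The point is that the complex of sheaves is concentrated in cohomological degrees $-1,0,1$ of its component index $p$, and combined with the vanishing of $H^q$ for $\DIV X$ when $q>0$, the spectral sequence differentials landing in or out of total degree $\leq 2$ are severely constrained. I would then check term by term that $E_1^{p,q}$ for $p+q\leq 2$ and $q\geq 1$ either vanishes or is killed so that the edge map $\Gamma(X,\HOM(\mathcal{E}_n^\bullet,\mathcal{C}^\bullet))\to\tau_{\leq 2}\RHom_X(\ZZ/n\ZZ,\mathbb{G}_{m,X})$ is a quasi-isomorphism; equivalently, one shows $H^i(\Gamma(X,\HOM(\mathcal{E}_n^\bullet,\mathcal{C}^\bullet)))\to H^i(\RHom_X(\ZZ/n\ZZ,\mathbb{G}_{m,X}))$ is an isomorphism for $i\leq 1$ and injective with the right image for $i=2$.

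The main obstacle I anticipate is the degree-$2$ check: one must verify that no contribution to $\Ext^2_X(\ZZ/n\ZZ,\mathbb{G}_{m,X})$ comes from $H^1$ or $H^2$ of the sheaf terms in a way invisible to the naive global-sections complex. This amounts to showing $H^1(X,j_*\mathbb{G}_{m,K})=0$ (fine, Hilbert 90 over $K$) and controlling $H^2(X,j_*\mathbb{G}_{m,K})=\mathrm{Br}(K)$ — here one uses that $X$ is affine of dimension $1$ with no real places, so the relevant higher cohomology of $\DIV X$ vanishes and the $d_2$ differential from the $\mathrm{Br}(K)$-spot of the $\mathcal{E}_n^\bullet$-indexed complex into the $\DIV X$-column is precisely the sum-of-invariants map whose cokernel and the structure of the complex $\HOM(\mathcal{E}_n^\bullet,\mathcal{C}^\bullet)$ conspire to give the claimed $2$-truncation. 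In practice I would phrase this as: the global-sections complex $\Gamma(X,\HOM(\mathcal{E}_n^\bullet,\mathcal{C}^\bullet))$ is $2$-truncated by construction, so the natural map to $\tau_{\leq 2}\RHom_X(\ZZ/n\ZZ,\mathbb{G}_{m,X})$ is determined by its behaviour on $H^0,H^1,H^2$, and each of these I compute directly from the explicit three-term complex $j_*\mathbb{G}_{m,K}\xrightarrow{(\cdot n,-\divis)}j_*\mathbb{G}_{m,K}\oplus\DIV X\xrightarrow{\divis+\cdot n}\DIV X$ using the vanishing statements above, matching them against Mazur's computation of $H^i(X,\ZZ/n\ZZ)$ via Artin–Verdier duality. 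Once the three groups match and the maps are checked to be the edge maps of the spectral sequence, the lemma follows.
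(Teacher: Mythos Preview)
Your overall strategy---run the hypercohomology spectral sequence for the complex $\HOM(\mathcal{E}_n^\bullet,\mathcal{C}^\bullet)$ and show the edge map $E_2^{p,0}\to\Ext^p_X(\ZZ/n\ZZ,\mathbb{G}_{m,X})$ is an isomorphism for $p\le 2$---is exactly the paper's approach. However, there is a concrete error in your cohomological input that would derail the argument as written.

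You assert that $\DIV X=\bigoplus_{\mathfrak p}\ZZ_{/\mathfrak p}$ is a sum of flasque sheaves and hence has vanishing higher cohomology. This is false in the \'etale topology: the skyscraper $\ZZ_{/\mathfrak p}=i_{\mathfrak p,*}\ZZ$ satisfies $H^q(X,\ZZ_{/\mathfrak p})=H^q(\Gal(\overline{k(\mathfrak p)}/k(\mathfrak p)),\ZZ)=H^q(\hat\ZZ,\ZZ)$, and in particular $H^2(X,\ZZ_{/\mathfrak p})\cong H^1(\hat\ZZ,\mathbb{Q}/\ZZ)=\mathbb{Q}/\ZZ$. Thus $H^2(X,\DIV X)=\bigoplus_{\mathfrak p}\mathbb{Q}/\ZZ$ is nonzero, and this is precisely the target of the invariant map you mention. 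In the paper's argument the $d_1$-differential $E_1^{0,2}=H^2(X,j_*\mathbb{G}_{m,K})\to E_1^{1,2}=H^2(X,j_*\mathbb{G}_{m,K})\oplus\bigoplus_{\mathfrak p}\mathbb{Q}/\ZZ$ is injective because its second component is the local invariant map; this injectivity (together with surjectivity of $E_1^{1,2}\to E_1^{2,2}$) kills the row $q=2$ on the $E_2$-page below total degree $3$ and forces the edge map to be an isomorphism in degrees $\le 2$.

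Your proposal is internally inconsistent on this point: you first claim $H^{>0}(X,\DIV X)=0$, then speak of a differential ``into the $\DIV X$-column'' realizing the sum-of-invariants map, which would have nowhere to land. Once you correct the computation of $H^2(X,\DIV X)$ and note it is the $d_1$ (not $d_2$) differential that carries the invariant map, your outline becomes the paper's proof.
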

\begin{proof}
We consider the hypercohomology spectral sequence $$E_2^{p,q} = H^p(H^q(\HOM(\mathcal{E}^\bullet_n,\mathcal{C}^\bullet))) \Rightarrow \Ext_X^{p+q}(\ZZ/n\ZZ,\mathbb{G}_m).$$  with $$E_1^{p,q} = H^q(X,\HOM(\mathcal{E}_n^\bullet,\mathcal{C}^\bullet)^p)$$  and show that the edge homomorphism  $E_2^{p,0} \rightarrow \Ext^p_X(\ZZ/n\ZZ,\mathbb{G}_{m,X})$ is an isomorphism for $p=0,1,2.$ The $E_1$-page of the spectral sequence can be visualized as follows$$\begin{sseq}{0...6}{0...6}
\ssdropbull
\ssarrow{1}{0}
\ssdropbull
\ssarrow{1}{0}
\ssdropbull
\ssmoveto{0}{2}
\ssdropbull
\ssarrow{1}{0}
\ssdropbull
\ssarrow{1}{0}
\ssdropbull
\end{sseq}$$ where the $\bullet$ means that the object at that corresponding position is non-zero. The differential $$E_1^{0,2} = H^2(X,j_* \mathbb{G}_{m,K}) \rightarrow  H^2(X,j_*\mathbb{G}_{m,K}) \oplus (\oplus_{p} \mathbb{Q}/\mathbb{Z}) = E_1^{1,2}$$  where $p$ ranges over all closed points, is injective (it is multiplication induced by $n^{-1}$ on the first factor, and the invariant map on the second factor). One then sees that $$E_1^{1,2} \rightarrow E_1^{2,2}$$ is surjective and that the $E_2$-page is as follows  $$\begin{sseq}{0...6}{0...6}
\ssdropbull
\ssmove 1 0
\ssdropbull
\ssmove 1 0
\ssdropbull
\ssmoveto{1}{2}
\ssdropbull
\end{sseq}$$ There can thus be no non-trivial differentials, so the spectral sequence collapses at $E_2$ and our lemma follows.
\end{proof}
\noindent For $a \in K^*$ let $\divis(a) \in \Div X$ be the element of $\Div X$ which we get by considering $a$ as a fractional ideal in $\mathcal{O}_K.$ 
\begin{corollary} \label{cor:extdescrip}
$$\Ext^i(\ZZ/n\ZZ,\mathbb{G}_{m,X}) = \begin{cases} \mu_n(K) & \mbox{ if } i= 0 \\ 
 Z_1/B_1 & \mbox{ if } i= 1 \\
 \Pic X /n & \mbox{ if } i=2 \\
 \ZZ/n\ZZ & \mbox{ if } i=3 \\
 0 &  \mbox{ if } i>3.
 \end{cases}$$
where $$Z_1 = \{(a,\mathfrak{a}) \in K^* \oplus \Div X| -\divis(a) = n\mathfrak{a}\}$$ and $$B_1 = \{(b^n,-\divis(b)) \in K^* \oplus \Div X | b \in K^* \}.$$  
\end{corollary}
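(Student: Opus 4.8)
The plan is to identify $\Ext^i_X(\ZZ/n\ZZ,\mathbb{G}_{m,X})$ with the cohomology of the explicit $2$-term complex $\Gamma(X,\HOM(\mathcal{E}_n^\bullet,\mathcal{C}^\bullet))$ furnished by Lemma \ref{quasiext} for $i=0,1,2$, and to fall back on the hypercohomology spectral sequence (or Artin--Verdier duality directly) for $i=3$ and $i>3$. Writing out the global sections of the three-term complex
$$j_*\mathbb{G}_{m,K}\xrightarrow{(n^{-1},\,\divis)} j_*\mathbb{G}_{m,K}\oplus\DIV X\xrightarrow{\divis+\,n}\DIV X,$$
we have $\Gamma(X,j_*\mathbb{G}_{m,K})=K^*$, $\Gamma(X,\DIV X)=\Div X$, and $\Gamma(X,\DIV X)$ computed with its flasque resolution contributes nothing higher, so the complex of global sections is exactly
$$K^*\xrightarrow{\,d^{-1}\,}K^*\oplus\Div X\xrightarrow{\,d^{0}\,}\Div X$$
placed in degrees $-1,0,1$, with $d^{-1}(b)=(b^n,-\divis(b))$ and $d^0(a,\mathfrak{a})=\divis(a)+n\mathfrak{a}$. (One has to be slightly careful about signs and about the direction of the multiplication-by-$n^{-1}$ map coming from $\mathcal{E}_n^\bullet$; the upshot is the description in the statement.)

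From this, I would read off each case. For $i=0$: $H^{-1}$ of the global-sections complex is $\ker d^{-1}=\{b\in K^*: b^n=1,\ \divis(b)=0\}=\mu_n(K)$, the last condition being automatic. For $i=1$: $H^0=\ker d^0/\operatorname{im}d^{-1}$, and $\ker d^0=\{(a,\mathfrak{a}): -\divis(a)=n\mathfrak{a}\}=Z_1$ while $\operatorname{im}d^{-1}=\{(b^n,-\divis(b))\}=B_1$, giving $Z_1/B_1$. For $i=2$: $H^1=\coker d^0=\Div X/(\divis(K^*)+n\Div X)$; since $\Pic X=\Div X/\divis(K^*)$ this cokernel is $\Pic X/n$, as claimed. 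For $i=3$ the truncation $\tau_{\le 2}$ no longer sees it, so I would instead invoke Artin--Verdier duality (the isomorphism $A$ recorded before Lemma \ref{dualcup}) or the known computation in \cite{MazurNotes}: dualizing $H^0(X,\ZZ/n\ZZ)=\ZZ/n\ZZ$ against the trace pairing gives $\Ext^3_X(\ZZ/n\ZZ,\mathbb{G}_{m,X})\cong(\ZZ/n\ZZ)^\sim\cong\ZZ/n\ZZ$, and the vanishing for $i>3$ follows from the fact that $\RHom_X(\ZZ/n\ZZ,\mathbb{G}_{m,X})$ is concentrated in degrees $\le 3$ (again Artin--Verdier, since $R\Gamma(X,-)$ of a constructible sheaf vanishes above degree $3$ for a totally imaginary $K$).

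The main obstacle is bookkeeping rather than conceptual: pinning down the precise maps in $\Gamma(X,\HOM(\mathcal{E}_n^\bullet,\mathcal{C}^\bullet))$ — in particular getting the $n$ versus $n^{-1}$ and the signs on $\divis$ correct so that $Z_1$ and $B_1$ come out exactly as stated — and checking that taking global sections of $\DIV X$ and of $j_*\mathbb{G}_{m,K}$ really does commute with the relevant cohomology in the range $p\le 2$ (this is where Lemma \ref{quasiext}, i.e.\ the collapse of the hypercohomology spectral sequence at $E_2$, is doing the work, so for $i=0,1,2$ there is genuinely nothing left to prove once that lemma is in hand). I would therefore present the $i=0,1,2$ cases as an immediate unwinding of Lemma \ref{quasiext}, and treat $i=3$ and $i>3$ separately via duality, noting that $\mu_n(K)^\sim$, $(\Pic X/n)^\sim$ etc.\ match the table preceding the corollary after applying $A$.
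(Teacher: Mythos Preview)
Your proposal is correct and follows essentially the same approach as the paper: for $i=0,1,2$ you invoke Lemma \ref{quasiext} and unwind the cohomology of the global-sections complex, and for $i=3$ (and $i>3$) you appeal to Artin--Verdier duality with $H^0(X,\ZZ/n\ZZ)=\ZZ/n\ZZ$. The only discrepancy is cosmetic: the paper places the complex $\Gamma(X,\HOM(\mathcal{E}_n^\bullet,\mathcal{C}^\bullet))$ in degrees $0,1,2$ (so that $H^i$ computes $\Ext^i$ directly), whereas you index it in degrees $-1,0,1$; your identifications are internally consistent, so this does not affect the argument.
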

\begin{proof}
The cases $i=0,1,2$ follows immediately from Lemma \ref{quasiext}, once we note that $H^1(\Gamma(X,\HOM(\mathcal{E}^\bullet_n,\mathcal{C}^\bullet)) \cong Z_1/B_1.$  The case when $i=3$ follows from that $H^0(X,\ZZ/n\ZZ) = \ZZ/n\ZZ$ so that $\Ext^3_X(\ZZ/n\ZZ,\mathbb{G}_{m,X}) \cong \ZZ/n\ZZ$ by Artin-Verdier duality.
\end{proof}
This corollary allows us to give a concrete description of $H^i(X,\ZZ/n\ZZ)$ for all $i.$ Namely,with $Z_1$ and $B_1$ as in Corollary \ref{cor:extdescrip}:
$$H^i(X,\ZZ/n\ZZ) = \begin{cases} \ZZ/n\ZZ & \mbox{ if } i= 0 \\ 
 (\Pic(X)/n)^\sim & \mbox{ if } i= 1 \\
 (Z_1/B_1)^\sim & \mbox{ if } i=2 \\
 \mu_n(K)^\sim & \mbox{ if } i=3 \\
 0 &  \mbox{ if } i>3.
 \end{cases}$$
\section{The cohomology ring of a totally imaginary number field }\label{section:cohomologyring}
\noindent In this section we will compute the cohomology ring $$\displaystyle H^*(X,\ZZ/2\ZZ)$$ for $X = \Spec \mathcal{O}_K$ the ring of integers of a totally imaginary number field $K$ under the isomorphisms $$H^i(X,\ZZ/2\ZZ) \cong \Ext_X^{3-i}(\ZZ/2\ZZ,\mathbb{G}_{m,X})^\sim$$ given by Artin-Verdier duality. To determine the cup product maps $$-\cup-: H^i(X,\ZZ/2\ZZ) \times H^j(X,\ZZ/2\ZZ) \rightarrow H^{i+j}(X,\ZZ/2\ZZ)$$ we see that since $H^n(X,\ZZ/2\ZZ) = 0$ for $n >3$ that we can restrict to when $i+j \leq 3.$ Since the cup product is graded commutative we can further assume without loss of generality that $i \geq j$ and since $H^0(X,\ZZ/2\ZZ) = \ZZ/2\ZZ$ is generated by the unit, we can also assume that $j \geq 1.$ To conclude, we only need to determine the cup product $$H^i(X,\ZZ/2\ZZ) \times H^j(X,\ZZ/2\ZZ) \rightarrow H^{i+j}(X,\ZZ/2\ZZ)$$ when $i=1,j=1$ and $i=2,j=1.$ The above discussion implies that to describe $\displaystyle H^*(X,\ZZ/2\ZZ)$ it is enough to determine the maps $$- \cup x:H^i(X,\ZZ/2\ZZ) \rightarrow H^{i+1}(X,\ZZ/2\ZZ)$$ for $i=1,2,$ and where $x \in H^1(X,\ZZ/2\ZZ)$ is arbitrary. From now on, we will denote the map $$- \cup x:H^i(X,\ZZ/2\ZZ) \rightarrow H^{i+1}(X,\ZZ/2\ZZ)$$ by $c_x.$ Let us fix an element $x \in H^1(X,\ZZ/2\ZZ)$ represented by the $\ZZ/2\ZZ$-torsor $p:Y \rightarrow X.$ Note that since $p$ is finite étale, the functors $p_*:\Sh(Y_{et}) \rightarrow \Sh(X_{et}),$ $p^* : \Sh(X_{et}) \rightarrow \Sh(Y_{et})$ which gives for any abelian étale sheaf $F$ on $X$ two maps $$N:p_*p^* F : \rightarrow F$$ and  $$u:F \rightarrow p_*p^*F,$$ both adjoint to the identity $p^* F \rightarrow p^*F.$ We will call $N$ the norm map.
\begin{lemma} \label{lemma:transfercup}
Let $X$ be a scheme and $$p:Y \rightarrow X$$ a $\ZZ/2\ZZ$-torsor corresponding to an element $x \in H^1(X,\ZZ/2\ZZ).$ Then the connecting homomorphism $$\delta_x:H^i(X,\ZZ/2\ZZ) \rightarrow H^{i+1}(X,\ZZ/2\ZZ)$$ arising from the short exact sequence of sheaves on $X$ \begin{equation} \label{xexact} 0 \rightarrow \ZZ/2\ZZ_X \xrightarrow{u} p_* p^* \ZZ/2\ZZ_X \xrightarrow{N} \ZZ/2\ZZ_X \rightarrow 0, \end{equation} is $c_x,$ the cup product with $x.$  
\end{lemma}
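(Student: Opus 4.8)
The plan is to reduce the claim, via the multiplicativity of connecting homomorphisms, to the single identity $\delta_x(1) = x$ for $1 \in H^0(X,\ZZ/2\ZZ)$, and then to prove that identity by an explicit Čech computation.

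For the reduction, recall that the connecting homomorphism of any short exact sequence of sheaves of $\ZZ/2\ZZ$-modules is $H^*(X,\ZZ/2\ZZ)$-linear for the cup product: for a sequence $0 \to M' \to M \to M'' \to 0$ with boundary $\delta$ one has $\delta(z \cup w) = z \cup \delta(w)$ for $z \in H^i(X,\ZZ/2\ZZ)$ and $w \in H^j(X,M'')$ (no sign intervenes, as we work in characteristic $2$; equivalently, passing to $\Drv(\Sh(X_{et}))$ the sequence \eqref{xexact} yields a morphism $\xi\colon \ZZ/2\ZZ_X \to \ZZ/2\ZZ_X[1]$, i.e.\ a class $\xi \in H^1(X,\ZZ/2\ZZ)$, and $\delta_x$ is post-composition with $\xi$, which on cohomology is the cup product with $\xi$). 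Applying this to \eqref{xexact} with $w = 1 \in H^0(X,\ZZ/2\ZZ)$ gives $\delta_x(z) = \delta_x(z\cup 1) = z \cup \delta_x(1)$ for every $z$, so once $\delta_x(1) = x$ is known we obtain $\delta_x = -\cup x = c_x$, which is the assertion.

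The remaining task is to compute $\delta_x(1) \in H^1(X,\ZZ/2\ZZ)$ — equivalently, the class of the extension \eqref{xexact}. I would choose an étale cover $\{U_i \to X\}$ over which $p\colon Y \to X$ becomes trivial, so that $Y\times_X U_i \cong U_i \sqcup U_i$ and the gluing data is a Čech $1$-cocycle $(g_{ij})$ valued in $\ZZ/2\ZZ$ representing $x$. In these trivializations the sheaf $p_*p^*\ZZ/2\ZZ_X = p_*\ZZ/2\ZZ_Y$ is $\ZZ/2\ZZ_{U_i}^{\oplus 2}$ (functions on the two sheets), the transition map over $U_i\times_X U_j$ is the identity when $g_{ij}=0$ and the swap of the two summands when $g_{ij}=1$, the map $u$ is the diagonal $a \mapsto (a,a)$, and $N$ is the sum $(a,b)\mapsto a+b$; in particular $\ker N$ is the diagonal, which coincides with the antidiagonal in characteristic $2$ and is therefore glued compatibly into the global constant sheaf $\ZZ/2\ZZ_X$. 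Lifting the local section $1$ of $\ZZ/2\ZZ_{U_i}$ to $(1,0) \in \ZZ/2\ZZ_{U_i}^{\oplus 2}$, the Čech differential over $U_i\times_X U_j$ is $(0,0)$ when $g_{ij}=0$ and $(1,0)-(0,1) = (1,1) = u(1)$ when $g_{ij}=1$; hence $\delta_x(1)$ is represented by the cocycle $(g_{ij})$, i.e.\ $\delta_x(1) = x$.

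The step requiring the most care is this last computation: one must fix the conventions under which the cocycle $(g_{ij})$ coming from the trivializations of $Y$ really represents the given class $x$, and check that the elementary descriptions of $u$, $N$, and $\ker N$ are compatible with those trivializations — this is exactly where the torsor structure and the hypothesis of characteristic $2$ (so that $N\circ u$ vanishes and $\ker N$ is the constant sheaf) enter. The remaining ingredients — multiplicativity of the boundary map, or the derived-category reformulation — are standard. The degenerate case $x = 0$ is included: then $Y = X\sqcup X$, the sequence \eqref{xexact} splits, and $\delta_x = 0 = c_0$.
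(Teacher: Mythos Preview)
Your argument is correct. Both your proof and the paper's ultimately rest on the same standard fact---that the connecting homomorphism of a short exact sequence of $\ZZ/2\ZZ$-module sheaves is cup product with the extension class in $\Ext^1_X(\ZZ/2\ZZ_X,\ZZ/2\ZZ_X)\cong H^1(X,\ZZ/2\ZZ)$---and then identify that class with $x$. The difference is in how this identification is made. You compute it directly: trivialize the torsor by an étale cover, write down $p_*p^*\ZZ/2\ZZ$ explicitly as $(\ZZ/2\ZZ)^{\oplus 2}$ glued by swaps along the Čech cocycle $(g_{ij})$ of $x$, lift $1$ to $(1,0)$, and read off that the boundary is $(g_{ij})$. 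The paper instead reduces to the universal case on the classifying topos $B\ZZ/2\ZZ$: it produces a geometric morphism $k_x\colon \Sh(X_{et})\to B\ZZ/2\ZZ$, checks that the sequence \eqref{xexact} is the $k_x$-pullback of the regular-representation sequence $0\to\ZZ/2\ZZ\to\ZZ/2\ZZ\oplus\ZZ/2\ZZ\to\ZZ/2\ZZ\to 0$ (the nontrivial step being a Beck--Chevalley isomorphism $p_*\tilde k_x^*\cong k_x^*u_*$, justified via ``tidy'' morphisms), and then invokes naturality of cup products under $k_x^*$. Your route is more elementary and self-contained, avoiding the topos-theoretic machinery; the paper's route is more conceptual and makes transparent \emph{why} the sequence carries the class $x$, namely because everything is pulled back from the universal $\ZZ/2\ZZ$-torsor.
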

\begin{proof}
Note that $x \in H^1(X,\ZZ/2\ZZ),$ viewed as a $\ZZ/2\ZZ$-torsor, corresponds to a geometric morphism $$k_x:\Sh(X_{et}) \rightarrow B\ZZ/2\ZZ,$$ where $B\ZZ/2\ZZ$ is the topos of $\ZZ/2\ZZ$-sets. We have a universal $\ZZ/2\ZZ$-torsor on $B\ZZ/2\ZZ$ corresponding to $\ZZ/2\ZZ$ with $\ZZ/2\ZZ$ acting on itself by right translation. Call this universal torsor for $U_{\ZZ/2\ZZ}.$  On $B\ZZ/2\ZZ$ we have the exact sequence \begin{equation} 0 \rightarrow \ZZ/2\ZZ \rightarrow \ZZ/2\ZZ \oplus \ZZ/2\ZZ \rightarrow \ZZ/2\ZZ \rightarrow 0 \label{bexact} \end{equation} of $\ZZ/2\ZZ$-modules where $\ZZ/2\ZZ$ acts on $\ZZ/2\ZZ \oplus \ZZ/2\ZZ$ by taking $(1,0)$ to $(0,1).$ We claim that we can reduce the proposition to this universal case. To be more precise, our first claim is that the connecting homomorphism $$\delta: H^i(\ZZ/2\ZZ,\ZZ/2\ZZ)  \rightarrow H^{i+1}(\ZZ/2\ZZ,\ZZ/2\ZZ)$$ from exact sequence \ref{bexact} is cup product with the non-trivial element of $H^1(\ZZ/2\ZZ,\ZZ/2\ZZ),$  while our second claim is that exact sequence \ref{xexact} is the pull-back of exact sequence \ref{bexact} by $k_x.$  Our proposition follows  from these claims. Indeed, note that the cup product can be identified with Yoneda composition under the isomorphisms $H^i(X,\ZZ/2\ZZ) \cong \Ext^i_X(\ZZ/2\ZZ_X,\ZZ/2\ZZ_X) ,$  and that the connecting homomorphism 
$$H^i(X,\ZZ/2\ZZ) \rightarrow H^{i+1}(X,\ZZ/2\ZZ)$$ is given by Yoneda composition with the element in $\Ext^1_X(\ZZ/2\ZZ_X,\ZZ/2\ZZ_X)$ corresponding to exact sequence \ref{xexact}. The sufficiency of our claims now follows from the diagram $$\begin{tikzcd}  \arrow[d, shift right = 10 ex, "k_x^*"] \arrow[d, shift left= 10 ex, "k_x^*"]  \Ext^1_{\ZZ/2\ZZ}(\ZZ/2\ZZ,\ZZ/2\ZZ) \times \Ext^i_{\ZZ/2\ZZ}(\ZZ/2\ZZ,\ZZ/2\ZZ) \arrow[r] & \arrow[d,"k_x^*"]  \Ext^{i+1}_{\ZZ/2\ZZ}(\ZZ/2\ZZ,\ZZ/2\ZZ) \\ \Ext^1_X(\ZZ/2\ZZ_X,\ZZ/2\ZZ_X) \times \Ext^i_X(\ZZ/2\ZZ_X,\ZZ/2\ZZ_X) \arrow[r] & \Ext^{i+1}_X(\ZZ/2\ZZ_X,\ZZ/2\ZZ_X). \end{tikzcd}$$ To prove that the connecting homomorphism $\delta$ corresponds to cup product with the non-trivial element of $H^1(\ZZ/2\ZZ,\ZZ/2\ZZ), $ we first see that the exact sequence \ref{bexact} corresponds to an element $\beta  \in \Ext^1_{\ZZ/2\ZZ}(\ZZ/2\ZZ,\ZZ/2\ZZ),$ and that the connecting homomorphism  $$\Ext^i_{\ZZ/2\ZZ}(\ZZ/2\ZZ,\ZZ/2\ZZ) \rightarrow \Ext^{i+1}_{\ZZ/2\ZZ}(\ZZ/2\ZZ,\ZZ/2\ZZ)$$ given by exact sequence \ref{bexact} is the Yoneda product with $\beta.$ The Yoneda product coincides with the cup product, so after the identification $$H^i(\ZZ/2\ZZ,\ZZ/2\ZZ) \cong \Ext^i_{\ZZ/2\ZZ}(\ZZ/2\ZZ,\ZZ/2\ZZ)$$ we only need to see that $\beta \in \Ext^1_{\ZZ/2\ZZ}(\ZZ/2\ZZ,\ZZ/2\ZZ)$ corresponds to the non-trivial element, i.e that sequence \ref{bexact} is non-split, which is immediate. To prove that exact sequence \ref{xexact} is the pullback of exact sequence \ref{bexact}, the only non-trivial claim is that $$k_x^\ast(\ZZ/2\ZZ \oplus \ZZ/2\ZZ) = p_*p^*\ZZ/2 \ZZ_X.$$ For proving this, let $$B\ZZ/2\ZZ_{/U_{\ZZ/2\ZZ}}$$ be the topos of $\ZZ/2\ZZ$-set over $U_{\ZZ/2\ZZ}.$  We have a geometric morphism $$u=(u^*,u_*):B\ZZ/2\ZZ_{/U_{\ZZ/2\ZZ}} \rightarrow B\ZZ/2\ZZ$$  where $$u^*(A) = A \times U_{/\ZZ/2\ZZ}.$$ One can then verify that $$u_*u^* (\ZZ/2\ZZ) = \ZZ/2\ZZ \oplus \ZZ/2\ZZ$$ with our previously defined action.  Note now that $$k_x : Sh(X_{et}) \rightarrow B\ZZ/2\ZZ$$ induces a map $$\tilde{k}_x:Sh(Y_{et}) \rightarrow  B\ZZ/2\ZZ_{/U_{\ZZ/2\ZZ}} $$ and that we have a pullback diagram of topoi $$\begin{tikzcd} Sh(Y_{et}) \arrow[r,"\tilde{k}_x"] \arrow[d,"p"] & B\ZZ/2\ZZ_{/U_{\ZZ/2\ZZ}} \arrow[d,"u"] \\ Sh(X_{et}) \arrow[r,"k_x"] & B \ZZ/2\ZZ .\end{tikzcd}$$  We now claim that there is a natural isomorphism of functors $$p_* \tilde{k}_x^* \cong k_x^* u_*,$$ this is what is known as a Beck-Chevalley condition. This can either be proven directly or follows from that $u$ is a ''tidy'' geometric morphism, as developed in \cite{MoerjdikProper} (see also \cite[C3.4]{JohnstoneElephant2} for a textbook account). Given this, we now see that $$k_x^\ast(u_*u^* \ZZ/2\ZZ) \cong p_* \tilde{k}_x^* u^* \ZZ/2\ZZ \cong p_*p^* k_x^* \ZZ/2\ZZ $$ and thus that $$k_x^\ast(u_u^*\ZZ/2\ZZ) \cong p_*p^* \ZZ/2\ZZ_X,$$ so exact sequence \ref{xexact} is the pullback of the exact sequence \ref{bexact}, and our proposition follows. 
\end{proof}
By Lemma \ref{dualcup} applied to the map $\delta_x: \ZZ/2\ZZ_X \rightarrow \ZZ/2\ZZ_X[1],$ we see that to compute $$c_x:H^i(X,\ZZ/2\ZZ) \rightarrow H^{i+1}(X,\ZZ/2\ZZ)$$ it is enough to calculate the map $$c_x^\sim : \Ext_X^{3-(i+1)}(\ZZ/2\ZZ_X,\mathbb{G}_{m,X}) \rightarrow \Ext_X^{3-i}(\ZZ/2\ZZ_X,\mathbb{G}_{m,X})$$ which is under Artin-Verdier duality Pontryagin dual to $c_x.$ We now note that $c_x^\sim$ is induced by applying $H^{3-i}$ to the map $$\RHom_X(\delta_x,\mathbb{G}_{m,X}): \RHom_X(\ZZ/2\ZZ_X[1],\mathbb{G}_{m,X}) \rightarrow \RHom_X(\ZZ/2\ZZ_X,\mathbb{G}_{m,X}).$$ Our plan is now to compute $\RHom_X(\delta_x,\mathbb{G}_{m,X})$ by first resolving $\ZZ/2\ZZ_X$ and then take a resolution of $\mathbb{G}_{m,X}.$ In what follows, if we have a morphism $f$ of complexes of étale sheaves, we will let $\Cone(f)$ be the mapping cone of $f.$ Consider now the map 
$$\delta_x: \ZZ/2\ZZ_X \rightarrow \ZZ/2\ZZ_X[1]$$ and lift $\delta_x$ to the zig-zag \begin{equation} \label{ziggystardust} \ZZ/2\ZZ_X \xleftarrow{q(u)} \Cone(u) \xrightarrow{\pi(u)} \ZZ/2\ZZ_X[1] \end{equation}  in the category of complexes of étale sheaves, where $\pi(u)$ is the canonical projection, and $q(u)$ is projection onto $p_*p^*\ZZ/2\ZZ_X$ followed by the norm map $$p_*p^* \ZZ/2\ZZ_X \rightarrow \ZZ/2\ZZ_X.$$  It is easy to see that $q(u)$ is a quasi-isomorphism and that this zig-zag lifts $\delta_x$ in the sense that if we denote by $$\gamma:\Ch(X_{et}) \rightarrow \Drv(X_{et})$$ the localization functor from the category of complexes of étale sheaves, then $\gamma(\pi(u)) \circ \gamma(q(u))^{-1}$ in $\Drv(X_{et})$ is $\delta_x.$ Consider now the resolutions $$\mathcal{E}_2 = (\ZZ_X \xrightarrow{\cdot 2} \ZZ_X)$$ and $p_*p^* \mathcal{E}_2$ of $\ZZ/2\ZZ_X$ and $p_*p^* \ZZ/2\ZZ_X$ respectively. We have just as for $\ZZ/2\ZZ_X$ two canonical maps $$\hat{u}: \mathcal{E}_2 \rightarrow p_* p^* \mathcal{E}_2, \hat{N}:p_*p^* \mathcal{E}_2 \rightarrow \mathcal{E}_2.$$ In preparation of a lemma, let us record some elementary facts. First,  if $Z$ is a complex of etale sheaves, a map $f:p_*p^* \mathcal{E}_2 \rightarrow Z$ together with a null-homotopy $k$ of $f\hat{u}:\mathcal{E}_2 \rightarrow Z$ defines a map $(f,k):\Cone(\hat{u}) \rightarrow Z.$ Second, the composite $$\hat{N} \hat{u}:\mathcal{E}_2 \rightarrow \mathcal{E}_2$$ is multiplication by $2.$  With these remarks, the following is a straightforward calculation.
\begin{lemma}\label{zigrep}
Let $\Cone(\hat{u})$ be the mapping cone of $\hat{u}:\mathcal{E}_2 \rightarrow p_*p^* \mathcal{E}_2$ and choose a nullhomotopy $h$ of $\mathcal{E}_2 \xrightarrow{\cdot 2} \mathcal{E}_2.$ Then the map $$q(\hat{u}) = (\hat{N},h):\Cone(u) \rightarrow \mathcal{E}_2$$ is a quasi-isomorphism.
\end{lemma}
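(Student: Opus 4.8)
The plan is to deduce the lemma from the fact, already noted above, that $q(u)\colon\Cone(u)\to\ZZ/2\ZZ_X$ in \eqref{ziggystardust} is a quasi-isomorphism, by comparing $q(\hat u)$ with $q(u)$ through the chosen resolutions. Write $\rho\colon\mathcal{E}_2\to\ZZ/2\ZZ_X$ for the reduction quasi-isomorphism. Since $p\colon Y\to X$ is finite étale, $p_*p^*$ is exact, so $p_*p^*\rho\colon p_*p^*\mathcal{E}_2\to p_*p^*\ZZ/2\ZZ_X$ is again a quasi-isomorphism; and by naturality of the unit maps the square with top row $\hat u$, bottom row $u$, and vertical maps $\rho$ and $p_*p^*\rho$ commutes on the nose. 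Hence there is an induced morphism $\Cone(\rho)\colon\Cone(\hat u)\to\Cone(u)$ of mapping cones, and the five lemma applied to the long exact cohomology sequences of the two cones shows $\Cone(\rho)$ is a quasi-isomorphism.

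The second and only substantive step is to check that the two composites $\Cone(\hat u)\to\ZZ/2\ZZ_X$ agree, i.e. that
$$\rho\circ q(\hat u)\;=\;q(u)\circ\Cone(\rho).$$
Granting this, $\rho$, $q(u)$, and $\Cone(\rho)$ are all quasi-isomorphisms, so $\rho\circ q(\hat u)$ is one, and therefore $q(\hat u)$ is a quasi-isomorphism by the two-out-of-three property. To verify the identity I would unwind the recipe recalled just before the lemma: for $f\colon p_*p^*\mathcal{E}_2\to Z$ and a nullhomotopy $k$ of $f\hat u$, the map $(f,k)\colon\Cone(\hat u)\to Z$ is, in cohomological degree $n$, given by $k^{n+1}$ on the summand $\mathcal{E}_2^{n+1}$ and by $f^n$ on the summand $(p_*p^*\mathcal{E}_2)^n$. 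Since $\mathcal{E}_2$ is concentrated in degrees $-1,0$, the nullhomotopy $h$ contributes to $q(\hat u)=(\hat N,h)$ only in degree $-1$, where $\rho$ vanishes; so $\rho\circ q(\hat u)$ is concentrated in degree $0$, where it equals $\rho\circ\hat N$. On the other side, in degree $0$ the map $\Cone(\rho)$ is $p_*p^*\rho$ and $q(u)$ is the norm $N$ on $\ZZ/2\ZZ_X$, so $q(u)\circ\Cone(\rho)$ is $N\circ p_*p^*\rho$, which equals $\rho\circ\hat N$ by naturality of the norm map. In particular the choice of $h$ is irrelevant, which is why the statement allows any nullhomotopy.

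Alternatively — and this is presumably the ``straightforward calculation'' the text alludes to — one argues by hand: $\Cone(\hat u)$ is the three-term complex $\ZZ_X\to\ZZ_X\oplus p_*p^*\ZZ_X\to p_*p^*\ZZ_X$ in degrees $-2,-1,0$, with differentials assembled from $\cdot 2$, the unit $\hat u$, and $\cdot 2$ on $p_*p^*\ZZ_X$. Since quasi-isomorphism may be tested étale-locally, one passes to a cover splitting the torsor, where $p_*p^*\ZZ_X\cong\ZZ_X^{\oplus 2}$ with $\hat u$ the diagonal and $\hat N$ the sum; a short computation then gives $H^{-2}=H^{-1}=0$ and $H^0\cong\ZZ/2\ZZ_X$, with $q(\hat u)$ inducing an isomorphism on $H^0$. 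In either approach the only real pitfall is bookkeeping — keeping the mapping-cone signs straight and matching the degree-zero pieces correctly — rather than any conceptual difficulty, since the identity $\hat N\hat u=\cdot 2$ recorded above is exactly what forces the cohomology of $\Cone(\hat u)$ to collapse onto $\ZZ/2\ZZ_X$.
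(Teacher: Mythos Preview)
Your proof is correct. The paper does not actually supply a proof of this lemma, dismissing it as ``a straightforward calculation'' following the two remarks that a nullhomotopy of $f\hat u$ yields a map out of the cone and that $\hat N\hat u=\cdot 2$. Your second approach --- passing to an \'etale cover splitting the torsor and computing the cohomology of the three-term complex by hand --- is presumably the calculation the authors intend. Your first approach, reducing to the already-established quasi-isomorphism $q(u)$ via the resolution map $\rho$ and naturality of the norm (counit), is a cleaner conceptual alternative that avoids direct inspection of $H^*(\Cone(\hat u))$; the observation that $\rho$ vanishes in degree $-1$, so the choice of $h$ is immaterial for the comparison, is a nice touch.
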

By Lemma \ref{zigrep}, if we let $\pi(\hat{u}):\Cone(\hat{u}) \rightarrow \mathcal{E}_2[1]$ be the projection, we can represent $\delta_X : \ZZ/2\ZZ_X \rightarrow \ZZ/2\ZZ_X[1]$ by the zig-zag \begin{equation} \label{ziggy1} \mathcal{E}_2 \xleftarrow{q(\hat{u})} \Cone(\hat{u}) \xrightarrow{\pi(\hat{u})} \mathcal{E}_2[1] \end{equation} in the category of complexes of étale sheaves on $X.$ Recall now the resolution $\mathcal{C}^\bullet$ of $\mathbb{G}_{m,X}$ from Section \ref{section:lemmasonetale} and apply $\HOM(-,\mathcal{C}^\bullet)$ to \ref{ziggy1} to get $$ \HOM(\mathcal{E}_2,\mathcal{C}^\bullet) \xrightarrow{\HOM(q(\hat{u}),\mathcal{C}^\bullet)} \HOM(\Cone(\hat{u}),\mathcal{C}^\bullet) \xleftarrow{\HOM(\pi(u),\mathcal{C}^\bullet)} \HOM(\mathcal{E}_2[1],\mathcal{C}^\bullet). $$ Note now that $\HOM(q(\hat{u}),\mathcal{C}^\bullet)$ is a quasi-isomorphism since $$q(\hat{u}):\Cone(\hat{u}) \rightarrow \mathcal{E}_2$$ is a quasi-isomorphism between complexes of locally free sheaves.  Apply the global sections functor to get $$ \Hom(\mathcal{E}_2,\mathcal{C}^\bullet) \xrightarrow{\Hom(q(\hat{u}),\mathcal{C}^\bullet)} \Hom(\Cone(\hat{u}),\mathcal{C}^\bullet) \xleftarrow{\Hom(\pi(u),\mathcal{C}^\bullet)} \Hom(\mathcal{E}_2[1],\mathcal{C}^\bullet).$$ Putting together the facts that $R \Gamma \circ R \HOM = \RHom_X$ and $R \HOM(\mathcal{E}_2,-) = \HOM(\mathcal{E}_2,-)$  with the natural transformation $\Gamma \rightarrow R\Gamma$ we get the commutative diagram 
$$\begin{tikzcd}[column sep = small] \Hom(\mathcal{E}_2,\mathcal{C}^\bullet)  \arrow[d] \arrow[r,"{\Hom(q(\hat{u}),\mathcal{C}^\bullet)}"] & \Hom(\Cone(\hat{u}),\mathcal{C}^\bullet) \arrow[d,"s"] & \arrow[l,"{\Hom(\pi(u),\mathcal{C}^\bullet)}",swap] \arrow[d] \Hom(\mathcal{E}_2[1],\mathcal{C}^\bullet) \\ 
\RHom_X(\ZZ/2\ZZ_X,\mathbb{G}_{m,X}) \arrow[r] & \RHom_X(\Cone(u),\mathbb{G}_{m,X}) & \arrow[l] \RHom_X(\ZZ/2\ZZ_X[1] , \mathbb{G}_{m,X}). \end{tikzcd}$$ 
The lower horizontal maps in this diagram comes from applying $\RHom_X(,\mathbb{G}_{m,X})$ to the zig-zag \ref{ziggystardust} which represented $\delta_x:\ZZ/2\ZZ_X \rightarrow \ZZ/2\ZZ_X[1].$ We now make the remark, which will be used in the proof of the following lemma as well as in the description of $$c_x:H^i(X,\ZZ/2\ZZ) \rightarrow H^{i+1}(X,\ZZ/2\ZZ),$$ that $x \in H^1(X,\ZZ/2\ZZ)$ can be represented by a $\ZZ/2\ZZ$-torsor $$p:Y = \Spec \mathcal{O}_L \rightarrow \Spec \mathcal{O}_K$$ where $Y = \Spec \mathcal{O}_L$ is the ring of integers of a quadratic unramified extension.  We will also say that $Y$ represents the $\ZZ/2\ZZ$-torsor, leaving the map $p$ implicit. We have just as for $X = \Spec \mathcal{O}_K$ the sheaves $\DIV Y$ and $j'_* \mathbb{G}_{m,L}$ on $Y,$ where $j': \Spec L \rightarrow \Spec \mathcal{O}_K$ is the canonical map.
\begin{lemma}
The map $s$ and hence $\Hom(q(\hat{u}),\mathcal{C}^\bullet),$ induces an isomorphism on $H^i$ for $i=0,1,2.$
\end{lemma}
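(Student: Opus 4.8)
The plan is to recognise $s$ as the canonical comparison map $\Gamma(X,-)\to R\Gamma(X,-)$ evaluated on the complex of sheaves $\HOM(\Cone(\hat{u}),\mathcal{C}^\bullet)$, and then to prove that this comparison is an isomorphism on $H^i$ for $i\le 2$ by a dévissage along the mapping cone that reduces everything to Lemma~\ref{quasiext}, applied both over $X$ and over $Y=\Spec \mathcal{O}_L$. For the set-up: since $\mathcal{E}_2$ and $p_*p^*\mathcal{E}_2$ are bounded complexes of locally free étale sheaves of finite rank, so is $\Cone(\hat{u})$, and hence --- exactly as for $\mathcal{E}_2^\bullet$ in the discussion preceding Lemma~\ref{quasiext} --- the complex $\HOM(\Cone(\hat{u}),\mathcal{C}^\bullet)$ represents $R\HOM(\Cone(\hat{u}),\mathbb{G}_{m,X})$, which the quasi-isomorphism $\Cone(\hat{u})\xrightarrow{\sim}\Cone(u)$ identifies with $R\HOM(\Cone(u),\mathbb{G}_{m,X})$. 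Under this identification $s$ is the natural $\Gamma\to R\Gamma$ map on $\HOM(\Cone(\hat{u}),\mathcal{C}^\bullet)$, so the claim for $s$ is exactly that this map is an isomorphism on $H^{\le 2}$. Granting that, the assertion for $\Hom(q(\hat{u}),\mathcal{C}^\bullet)$ follows at once from the commutative square above, since its left vertical map is an isomorphism on $H^{\le 2}$ by Lemma~\ref{quasiext} and its lower horizontal map is an isomorphism on all $H^i$ because $q(u)$ is a quasi-isomorphism.

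To prove the statement for $s$, I would first observe that the tautological short exact sequence $0\to p_*p^*\mathcal{E}_2\to \Cone(\hat{u})\to\mathcal{E}_2[1]\to 0$ is split in each degree, hence stays (termwise split) exact after applying the additive functors $\HOM(-,\mathcal{C}^\bullet)^n$, giving a termwise split short exact sequence of complexes of sheaves
$$0\to\HOM(\mathcal{E}_2[1],\mathcal{C}^\bullet)\to\HOM(\Cone(\hat{u}),\mathcal{C}^\bullet)\to\HOM(p_*p^*\mathcal{E}_2,\mathcal{C}^\bullet)\to 0.$$
Applying $\Gamma(X,-)$ (which preserves termwise split exactness) and $R\Gamma(X,-)$ (which preserves distinguished triangles) produces two long exact cohomology sequences related by a comparison ladder induced by $\Gamma\to R\Gamma$; by the five lemma it then suffices to show that this comparison is an isomorphism on $H^i$, $i\le 2$, for each of $\HOM(\mathcal{E}_2,\mathcal{C}^\bullet)$ and $\HOM(p_*p^*\mathcal{E}_2,\mathcal{C}^\bullet)$. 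For the first this is precisely Lemma~\ref{quasiext} with $n=2$. For the second I would use that $p\colon Y\to X$ is finite étale, so $p_*=p_!$ is exact and $p^!=p^*$: the $(p_!,p^!)$-adjunction gives $\HOM_X(p_*p^*\mathcal{E}_2,\mathcal{C}^\bullet)\cong p_*\HOM_Y(p^*\mathcal{E}_2,p^*\mathcal{C}^\bullet)$, and $p^*\mathcal{C}^\bullet\cong\mathcal{C}_Y^\bullet$ is the analogous complex $j'_*\mathbb{G}_{m,L}\to\DIV Y$ on $Y$ (proper base change for the finite map $j$, together with $p^*\DIV X\cong\DIV Y$). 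Since $p_*$ is exact, $\Gamma(X,p_*(-))=\Gamma(Y,-)$ and $R\Gamma(X,p_*(-))=R\Gamma(Y,-)$, so the comparison map over $X$ for this term is literally the comparison map over $Y$ for $\HOM_Y(p^*\mathcal{E}_2,\mathcal{C}_Y^\bullet)$ (and $p^*\mathcal{E}_2$ resolves $\ZZ/2\ZZ_Y$); as $L$ is again a totally imaginary number field, Lemma~\ref{quasiext} over $Y$ gives the isomorphism on $H^{\le 2}$ and the five lemma concludes.

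The hard part will be the bookkeeping in this dévissage: checking that the identifications $p^*j_*\mathbb{G}_{m,K}\cong j'_*\mathbb{G}_{m,L}$, $p^*\DIV X\cong\DIV Y$, and $\HOM_X(p_*(-),-)\cong p_*\HOM_Y(-,p^*(-))$ are all compatible with the differentials, so that the reduction to Lemma~\ref{quasiext} over $Y$ is an honest equality of comparison maps and not merely of underlying objects. If that turns out to be delicate, I would fall back on the hands-on alternative, closer in spirit to the proof of Lemma~\ref{quasiext} itself: write $\HOM(\Cone(\hat{u}),\mathcal{C}^\bullet)$ out explicitly (it is concentrated in degrees $0$ through $3$, its terms assembled from $j_*\mathbb{G}_{m,K}$, $p_*j'_*\mathbb{G}_{m,L}$, $\DIV X$ and $p_*\DIV Y$) and run its hypercohomology spectral sequence directly. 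In that approach the key observations are that the entire row $q=1$ of the $E_1$-page vanishes, because $H^1(X,-)$ of each of these four sheaves is zero, and that $E_2^{0,2}=0$, because $H^2(Y,j'_*\mathbb{G}_{m,L})\hookrightarrow H^2(Y,\DIV Y)$ (here $\operatorname{Br}(Y)=0$ since $L$ is totally imaginary); these two facts force the edge homomorphisms $E_2^{i,0}\to\Ext^i_X(\ZZ/2\ZZ,\mathbb{G}_{m,X})$ to be isomorphisms for $i=0,1,2$, which is exactly the assertion about $s$.
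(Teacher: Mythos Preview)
Your argument is correct. Your \emph{fallback} approach---running the hypercohomology spectral sequence for $\HOM(\Cone(\hat{u}),\mathcal{C}^\bullet)$ directly, observing that the row $q=1$ vanishes and that the differential $d_1\colon E_1^{0,2}\to E_1^{1,2}$ is injective because the invariant map $H^2(Y,j'_*\mathbb{G}_{m,L})\hookrightarrow H^2(Y,\DIV Y)$ is---is exactly the paper's proof. Your \emph{primary} approach, however, is genuinely different: rather than computing the spectral sequence for the cone complex from scratch, you split $\Cone(\hat{u})$ via the termwise-split short exact sequence $0\to p_*p^*\mathcal{E}_2\to\Cone(\hat{u})\to\mathcal{E}_2[1]\to 0$, and reduce (by the five lemma) to two instances of Lemma~\ref{quasiext}, one over $X$ and one over $Y$, the latter via the finite-\'etale adjunction $\HOM_X(p_*F,G)\cong p_*\HOM_Y(F,p^*G)$ together with the identifications $p^*j_*\mathbb{G}_{m,K}\cong j'_*\mathbb{G}_{m,L}$ and $p^*\DIV X\cong\DIV Y$. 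This is a cleaner and more conceptual reduction: it isolates precisely why the lemma holds (it is Lemma~\ref{quasiext} twice, glued by a five lemma), and it avoids repeating the spectral-sequence bookkeeping. The paper's direct approach, on the other hand, stays closer to the explicit complexes that are used anyway in the subsequent Corollaries~\ref{cor:cap1}--\ref{cor:cap2}, so there is no real cost to its slight redundancy. The compatibility checks you flag (that the adjunction and base-change identifications respect the differentials) are routine in this finite-\'etale setting and pose no real difficulty.
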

\begin{proof}
We know that the map $\Hom(\mathcal{E}_2,\mathcal{C}^\bullet) \rightarrow \RHom_X(\ZZ/2\ZZ_X,\mathbb{G}_{m,X})$ induces an isomorphism on $H^i$ for $i=0,1,2$ by Lemma \ref{quasiext}. For the convenience of the reader, let us note that $\HOM(\Cone(\hat{u}),\mathcal{C}^\bullet)$ is isomorphic to $\Cone(\HOM(\mathcal{E}_2,N))[-1]$ where $N:p_*p^* \mathbb{G}_{m,X} \rightarrow \mathbb{G}_{m,X}$ is the norm map. One now runs the hypercohomology spectral sequence on $\HOM(\Cone(\hat{u}),\mathcal{C}^\bullet)$ and sees that the $E_1$-page can be visualized as follows 
$$\begin{sseq}{0...6}{0...6}
\ssdropbull
\ssarrow{1}{0}
\ssdropbull
\ssarrow{1}{0}
\ssdropbull
\ssarrow{1}{0}
\ssdropbull
\ssmoveto{0}{2}
\ssdropbull
\ssarrow{1}{0}
\ssdropbull
\ssarrow{1}{0}
\ssdropbull
\ssarrow{1}{0}
\ssdropbull
\end{sseq}$$ where the $\bullet$ means that the object at that corresponding position is non-zero. As in the proof of Lemma \ref{quasiext}, the differential $$d_1:E_1^{0,2} \rightarrow E_1^{1,2}$$ is injective. The differential $d_1:E_1^{0,2} \rightarrow E_1^{1,2}$ can then be identified with the map $$H^2(Y,j'_* \mathbb{G}_{m,L}) \rightarrow  H^2(X,j_*\mathbb{G}_{m,L}) \oplus H^2(Y,\DIV Y) \oplus H^2(Y,j'_*\mathbb{G}_{m,L})$$ which is the map induced by the inverse of the norm on the first component, the invariant map on the second component and the map induced by multiplication by $2^{-1}$ on the third component.  This map is injective since the invariant map 
$$H^2(Y,j'_*\mathbb{G}_{m,L}) \rightarrow H^2(Y,\DIV Y)$$ is injective. On the $E_2$-page we see thus that no differential can hit $E_2^{p,0}$ for $p=0,1,2.$ This implies that the edge homomorphism is an isomorphism in degrees $p=0,1,2$ so that $H^i(s)$ induces an isomorphism in the stated degrees. The fact that $H^i(\pi(u))$ is an isomorphism as well follows directly from that both $H^i(\pi(u))$ and the composite $H^i(s) \circ H^i(\pi(u))$ are isomorphisms. 
\end{proof}
\begin{corollary}
The map $\Ext^i(\ZZ/2\ZZ_X,\mathbb{G}_{m,X}) \rightarrow \Ext^{i+1}(\ZZ/2\ZZ_X,\mathbb{G}_{m,X})$ for $i=0,1,$ is isomorphic to the map $H^i(\pi(u))^{-1} \circ H^i(t).$ 
\end{corollary}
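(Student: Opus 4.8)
The plan is to read the statement off the chain of identifications already in place. By Lemma~\ref{lemma:transfercup} the cup product $c_x$ is the connecting homomorphism $\delta_x$ of the short exact sequence~\ref{xexact}, and by Lemma~\ref{dualcup} applied to $\delta_x\colon\ZZ/2\ZZ_X\to\ZZ/2\ZZ_X[1]$ its Pontryagin dual $c_x^\sim$ --- the map $\Ext^i_X(\ZZ/2\ZZ_X,\mathbb{G}_{m,X})\to\Ext^{i+1}_X(\ZZ/2\ZZ_X,\mathbb{G}_{m,X})$ of the statement --- is obtained by applying $H^\bullet$ to $\RHom_X(\delta_x,\mathbb{G}_{m,X})$ in the appropriate degree. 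So everything reduces to computing $\RHom_X(\delta_x,\mathbb{G}_{m,X})$ on cohomology in degrees $\le2$, which is exactly what the resolutions $\mathcal{E}_2$ and $\mathcal{C}^\bullet$ were introduced for.

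First I would use that $\delta_x$ is represented in $\Ch(X_{et})$ by the zig-zag~\ref{ziggy1}, $\mathcal{E}_2\xleftarrow{q(\hat u)}\Cone(\hat u)\xrightarrow{\pi(\hat u)}\mathcal{E}_2[1]$, with $q(\hat u)$ a quasi-isomorphism of complexes of locally free sheaves. Applying $\HOM(-,\mathcal{C}^\bullet)$ and then $\Gamma(X,-)$ turns it into the top row of the commutative square displayed just above the statement. There $\HOM(q(\hat u),\mathcal{C}^\bullet)$ is a sheaf-level quasi-isomorphism and, by the preceding lemma, $\Hom(q(\hat u),\mathcal{C}^\bullet)$ is an isomorphism on $H^j$ for $j\le2$; inverting it in those degrees presents $c_x^\sim$ as $H^\bullet(\Hom(q(\hat u),\mathcal{C}^\bullet))^{-1}\circ H^\bullet(\Hom(\pi(\hat u),\mathcal{C}^\bullet))$, which is precisely the composite $H^i(\pi(u))^{-1}\circ H^i(t)$ of the statement.

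What makes this legitimate is that the $\Hom(-,\mathcal{C}^\bullet)$ zig-zag really does compute $\RHom_X(\delta_x,\mathbb{G}_{m,X})$ in the degrees at hand. For this I would run the comparison between the top and bottom rows of the big square through its vertical maps, which are the canonical transformations $\Gamma\to R\Gamma$ under the identifications $\HOM(\mathcal{E}_2,\mathcal{C}^\bullet)\cong R\HOM(\ZZ/2\ZZ_X,\mathbb{G}_{m,X})$, $\HOM(\Cone(\hat u),\mathcal{C}^\bullet)\cong R\HOM(\Cone(u),\mathbb{G}_{m,X})$, and the shifted analogue. By Lemma~\ref{quasiext} the two outer verticals are isomorphisms on $H^j$ in the relevant range, and by the preceding lemma the middle vertical $s$ and the map $\Hom(\pi(\hat u),\mathcal{C}^\bullet)$ are isomorphisms on $H^j$ for $j\le2$; a chase over this square then matches the two composites, modulo the harmless reindexing that rewrites $\Ext^{3-(i+1)}_X\to\Ext^{3-i}_X$ as $\Ext^i_X\to\Ext^{i+1}_X$. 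The restriction to $i=0,1$ is simply the range in which $\Hom(\mathcal{E}_2,\mathcal{C}^\bullet)$ still computes the groups in question, it being only a model for $\tau_{\le2}\RHom_X(\ZZ/2\ZZ_X,\mathbb{G}_{m,X})$.

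So the corollary is a formal consequence of Lemmas~\ref{lemma:transfercup}, \ref{dualcup}, \ref{quasiext} and the preceding lemma, and the only real work is bookkeeping. The one point demanding care is keeping the shift conventions consistent, so that both composites genuinely land in degrees $i$ and $i+1$, and invoking the preceding lemma in exactly the form that $s$ and $\Hom(\pi(\hat u),\mathcal{C}^\bullet)$ are isomorphisms on the cohomology groups actually occurring --- that lemma is where the hypothesis that $K$ is totally imaginary has already been spent, through injectivity of the invariant maps appearing in the relevant $E_1$-pages, so no new input is needed here.
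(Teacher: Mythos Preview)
Your argument is correct and is exactly the unpacking the paper intends: the corollary is stated there without proof, as an immediate consequence of the commutative square together with Lemma~\ref{quasiext} and the preceding lemma, and your diagram chase is precisely that. One inessential slip: you assert that $\Hom(\pi(\hat u),\mathcal{C}^\bullet)$ is an isomorphism on $H^j$ for $j\le 2$, but this is false (in the derived category $\pi(\hat u)$ models $\delta_x$, which is certainly not invertible) and, more to the point, is not needed---the chase only uses that the three vertical arrows and $\Hom(q(\hat u),\mathcal{C}^\bullet)$ are isomorphisms in the relevant degrees; you may have been misled by the garbled final sentence of the preceding lemma, which seems to make the same superfluous claim.
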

Note that if $Y = \Spec \mathcal{O}_L$ represents $x \in H^1(X,\ZZ/2\ZZ)$ we can write $L = K(\sqrt{c})$ for some $c \in K^*$ with the divisor of $c$ even, i.e $\divis(c) = 2 \mathfrak{c}$ for some $\mathfrak{c} \in \Div(X).$ We thus get for every $x \in H^1(X,\ZZ/2\ZZ)$ an element $c \in K^*,$ well-defined up to squares. We provide a proof of the more involved of the following two corollaries, leaving the first to the reader.
\begin{corollary}\label{cor:cap1}
Let $x \in H^1(X,\ZZ/2\ZZ)$ and identify $x$ with an unramified quadratic extension $L = K(\sqrt{a})$ with $a \in K^*$ such that $\divis(a) = 2\mathfrak{a}$ for some $\mathfrak{a} \in \Div(X).$ The map $$c_x^\sim: \Ext_X^0(\ZZ/2\ZZ,\mathbb{G}_{m,X}) \rightarrow \Ext_X^1(\ZZ/2\ZZ,\mathbb{G}_{m,X})$$ then takes $$-1 \in \mu_2(K) \cong \Ext^0_X(\ZZ/2\ZZ,\mathbb{G}_{m,X})$$ to $$(a^{-1},\mathfrak{a}) \in H^1(\Hom(\mathcal{E}^\bullet_2,\mathcal{C}^\bullet))  \cong \Ext^1_X(\ZZ/2\ZZ,\mathbb{G}_{m,X}).$$  
\end{corollary}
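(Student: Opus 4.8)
The plan is to avoid the chain-level computation with the zig-zag \ref{ziggy1} and instead recognise $c_x^\sim$ as a connecting homomorphism in a long exact $\Ext$-sequence. We may assume $x\neq 0$ (so $[L:K]=2$), the case $x=0$ being trivial. By Lemma \ref{lemma:transfercup}, $c_x$ is the connecting homomorphism $\delta_x$ of the short exact sequence \ref{xexact}
\[ 0 \to \ZZ/2\ZZ_X \xrightarrow{\ u\ } p_*p^*\ZZ/2\ZZ_X \xrightarrow{\ N\ } \ZZ/2\ZZ_X \to 0 , \]
and $c_x^\sim$ is obtained by applying $H^\bullet$ to $\RHom_X(\delta_x,\mathbb{G}_{m,X})$. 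Hence, up to a sign (irrelevant since these groups are $2$-torsion), $c_x^\sim$ is the connecting homomorphism of the long exact sequence obtained by applying $\RHom_X(-,\mathbb{G}_{m,X})$ to \ref{xexact}. Setting $Y=\Spec\mathcal{O}_L$ and using the finite \'etale adjunction $\RHom_X(p_*p^*F,\mathbb{G}_{m,X})\cong\RHom_Y(p^*F,\mathbb{G}_{m,Y})$ (valid since $p_*=p_!$ and $p^*=p^!$), this sequence reads in low degrees
\[ \cdots \to \Ext^0_Y(\ZZ/2\ZZ_Y,\mathbb{G}_{m,Y}) \xrightarrow{\ u^*\ } \Ext^0_X(\ZZ/2\ZZ_X,\mathbb{G}_{m,X}) \xrightarrow{\ c_x^\sim\ } \Ext^1_X(\ZZ/2\ZZ_X,\mathbb{G}_{m,X}) \xrightarrow{\ N^*\ } \Ext^1_Y(\ZZ/2\ZZ_Y,\mathbb{G}_{m,Y}) \to \cdots; \]
note $L$ is again totally imaginary, so Corollary \ref{cor:extdescrip} describes the $\Ext$-groups over $Y$ as well.

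Next I would identify the two outer maps. A short computation with the triangle identities of the adjunction $p_*\dashv p^*$ shows that, after the identification above, $N^*=\RHom_X(N,\mathbb{G}_{m,X})$ is the pullback $p^*$, while $u^*=\RHom_X(u,\mathbb{G}_{m,X})$ is the transfer; on $\Ext^0$ the transfer is the norm $N_{L/K}\colon\mu_2(L)\to\mu_2(K)$, and $N_{L/K}(-1)=(-1)^{[L:K]}=1$. Thus $u^*$ is trivial on $\Ext^0$, so by exactness $c_x^\sim$ is injective on $\Ext^0_X(\ZZ/2\ZZ_X,\mathbb{G}_{m,X})=\mu_2(K)\cong\ZZ/2\ZZ$. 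In particular $c_x^\sim(-1)\neq 0$, and $\operatorname{im}(c_x^\sim)=\ker\bigl(p^*\colon\Ext^1_X\to\Ext^1_Y\bigr)$ is a group of order exactly two.

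It remains to show that this kernel is $\{0,(a^{-1},\mathfrak a)\}$, where $\divis(a)=2\mathfrak a$. In the description $\Ext^1_X(\ZZ/2\ZZ_X,\mathbb{G}_{m,X})=Z_1/B_1$ of Corollary \ref{cor:extdescrip}, the pair $(a^{-1},\mathfrak a)$ lies in $Z_1$ since $-\divis(a^{-1})=\divis(a)=2\mathfrak a$, and by functoriality of the complex $\HOM(\mathcal{E}^\bullet_2,\mathcal{C}^\bullet)$ in $X$, the map $p^*$ carries the class of $(c,\mathfrak c)$ to the class of $(c,p^*\mathfrak c)$, with $p^*\mathfrak c$ the conorm divisor ($L/K$ being unramified). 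From $(\sqrt{a})^2=a$ and $\divis(a)=2\mathfrak a$ we get $\divis_L(\sqrt{a})=p^*\mathfrak a$, so
\[ (a^{-1},p^*\mathfrak a)=\bigl((\sqrt{a})^{-2},\,-\divis_L((\sqrt{a})^{-1})\bigr) \]
is of the form $(\beta^2,-\divis_L\beta)$ and hence lies in $B_1$ for $\mathcal{O}_L$; thus $(a^{-1},\mathfrak a)\in\ker(p^*)$. On the other hand $(a^{-1},\mathfrak a)\notin B_1$ for $\mathcal{O}_K$, since $a^{-1}=b^2$ with $b\in K^*$ would give $a\in(K^*)^2$, contrary to $[L:K]=2$. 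Hence $(a^{-1},\mathfrak a)$ represents the unique nonzero element of $\ker(p^*)=\operatorname{im}(c_x^\sim)$, and since $c_x^\sim(-1)\neq 0$ we conclude $c_x^\sim(-1)=(a^{-1},\mathfrak a)$, as claimed. (Since $\Ext^1_X$ is $2$-torsion, $(a^{-1},\mathfrak a)$ and $(a,-\mathfrak a)$ represent the same class, so the choice of representative is immaterial.)

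The step I expect to be the main obstacle is the identification in the second paragraph: confirming that $\RHom_X(N,\mathbb{G}_{m,X})$ and $\RHom_X(u,\mathbb{G}_{m,X})$ are the pullback and the transfer respectively, and that the pullback acts on the model $Z_1/B_1$ exactly by $(c,\mathfrak c)\mapsto(c,p^*\mathfrak c)$ — both elementary, but requiring careful adjunction bookkeeping. An alternative closer to the machinery already developed is to compute $c_x^\sim(-1)$ directly from \ref{ziggy1}: push $-1\in K^*$ forward along $\HOM(\pi(\hat u),\mathcal{C}^\bullet)$, then correct by the $\HOM(q(\hat u),\mathcal{C}^\bullet)$-image of a trial cocycle together with a coboundary; there $\sqrt{a}\in L^*$ enters precisely as the coboundary needed to move the class into the image of the norm map, through the identities $(\sqrt{a})^2=a$ and $\divis_L(\sqrt{a})=p^*\mathfrak a$. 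Either way, the arithmetic input making everything fit is the relation $\divis(a)=2\mathfrak a$ together with the triviality of $N_{L/K}$ on $\mu_2$.
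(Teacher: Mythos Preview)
Your argument is correct and takes a genuinely different route from the paper's. The paper computes $c_x^\sim(-1)$ by an explicit cocycle chase through the zig-zag \ref{ziggy1}: it writes out the complex $\Hom(\Cone(\hat u),\mathcal C^\bullet)$ in components, pushes $-1$ to the cycle $(-1,1,1)$, and then reduces by the coboundary $d^0(\sqrt a)$ to land in the image of $g^1$, reading off $(a^{-1},\mathfrak a)$. Your approach instead recognises $c_x^\sim$ as the connecting homomorphism in the long exact $\Ext$-sequence for \ref{xexact}, identifies the flanking maps as transfer and pullback via the ambidextrous adjunction for finite \'etale $p$, and then pins down the image by showing it is the order-two kernel of $p^*$ on $\Ext^1$ and that $(a^{-1},\mathfrak a)$ is its nontrivial element.

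What each buys: the paper's computation is self-contained and produces the answer mechanically once the complexes are written down; it also sets up the machinery reused verbatim in Corollary~\ref{cor:cap2}. Your argument is more conceptual and avoids writing out $\Hom(\Cone(\hat u),\mathcal C^\bullet)$, but it leans on two identifications you correctly flag as needing care: that under the adjunction $\RHom_X(N,\mathbb G_{m,X})$ and $\RHom_X(u,\mathbb G_{m,X})$ become pullback and transfer (this is the triangle-identity check you mention), and that the pullback on the model $Z_1/B_1$ is the na\"ive $(c,\mathfrak c)\mapsto(c,p^*\mathfrak c)$. Both are standard, and your use of $N_{L/K}(-1)=1$ together with $\divis_L(\sqrt a)=p^*\mathfrak a$ is exactly the arithmetic that drives the paper's coboundary step as well. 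Note, incidentally, that your alternative sketch at the end \emph{is} the paper's proof.
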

\begin{proof}
We have the zig-zag $$\begin{tikzcd}[column sep = huge] \Hom(\mathcal{E}_2,\mathcal{C}^\bullet)  \arrow[r,"{\Hom(q(\hat{u}),\mathcal{C}^\bullet)}"] & \Hom(\Cone(\hat{u}),\mathcal{C}^\bullet) & \arrow[l,"{\Hom(\pi(u),\mathcal{C}^\bullet)}",swap]  \Hom(\mathcal{E}_2[1],\mathcal{C}^\bullet), \end{tikzcd}$$ and we note that the map $$g= \Hom(q(\hat{u}),\mathcal{C}^\bullet): \Hom(\mathcal{E}_2,\mathcal{C}^\bullet) \rightarrow \Hom(\Cone(\hat{u}),\mathcal{C}^\bullet)$$ is isomorphic to the map given in components as follows
$$ \begin{tikzcd} K^* \arrow[r,"g^0"] \arrow[d,"d^0"] & L^* \arrow[d,"{d^0_{\Hom(\Cone(\hat{u}),\mathcal{C}^\bullet)}}"] \\ K^* \oplus \Div X \arrow[r,"g^1"] \arrow[d,"{d^1}"] & K^* \oplus (\Div Y \oplus L^*) \arrow[d,,"{d^1_{\Hom(\Cone(\hat{u}),\mathcal{C}^\bullet)}}"] \\ \Div X \arrow[r,"g^2"] \arrow[d] & (K^* \oplus \Div X) \oplus \Div Y \arrow[d,,"{d^2_{\Hom(\Cone(\hat{u}),\mathcal{C}^\bullet)}}"] \\ 0 \arrow[r] & \Div X \end{tikzcd} $$ where $$g^0(a) = i(a),$$ $$g^1(a,\mathfrak{a}) = (a,i(\mathfrak{a}),i(a))$$ and $$g^2(\mathfrak{a}) = (1,\mathfrak{a},i(\mathfrak{a}))$$ and the unadorned differentials are the differentials coming from $$\Hom(\mathcal{E}^\bullet_2,\mathcal{C}^\bullet).$$ The differentials for $\Hom(\Cone(\hat{u}),\mathcal{C}^\bullet)$ are given by $$d^0_{\Hom(\Cone(\hat{u}),\mathcal{C}^\bullet)}(a) = (N_{L/K}(a)^{-1},\divis(a),a^{-2})$$ while  $d^1_{\Hom(\Cone(\hat{u}),\mathcal{C}^\bullet)}(a,\mathfrak{b},c)$ is equal to  $$(a^{-2} \cdot N_{L/K}(c),\divis(a)+N_{L/K}(\mathfrak{b}),2\mathfrak{b}+\divis(c))$$ and
$$d^2_{\Hom(\Cone(\hat{u}),\mathcal{C}^\bullet)}(a,\mathfrak{a},\mathfrak{b}) = \divis(a)-2\mathfrak{a}+N_{L/K}(\mathfrak{b}).$$  To see that the complex to the right indeed is  $\Hom(\Cone(\hat{u}),\mathcal{C}^\bullet)$  one simply uses the isomorphisms $$\Hom_X(\ZZ_X,j_* \mathbb{G}_{m,K}) \cong K^*, \Hom_X(p_*p^* \ZZ_X,j_* \mathbb{G}_{m,K}) \cong L^*$$ and $$\Hom_X(\ZZ_X,\DIV X) \cong \Div X, \HOM_X(p_*p^* \ZZ_X , \DIV X ) \cong \Div Y$$ and that the differentials are as claimed follows easily. We now see that $\Hom(\pi(u),\mathcal{C}^\bullet)(-1)$ is the cycle $(-1,1,1).$ If we reduce $(-1,1,1)$ by $d^0(\sqrt{a}) = (-a^{-1},\divis(\sqrt{a}),a^{-1})$ we get the cycle $$(a^{-1},\divis(\sqrt{a}),a^{-1}) = (a,\divis(i(\mathfrak{a})),a) = g^1(a^{-1},\mathfrak{a}).$$ This shows that $(-1,1,1)$ goes to $(a^{-1},\mathfrak{a})$ so our Corollary follows.
\end{proof}
\begin{corollary} \label{cor:cap2}
Let $x \in H^1(X,\ZZ/2\ZZ)$ and identify $x$ with an unramified quadratic extension $Y =  \Spec \mathcal{O}_L$ where $L=K(\sqrt{a}).$ The map $$c_x^\sim: \Ext^1_X (\ZZ/2\ZZ,\mathbb{G}_{m,X}) \rightarrow \Ext^2_X(\ZZ/2\ZZ,\mathbb{G}_{m,X})$$ then takes $$(b,\mathfrak{b}) \in  H^1(\Hom(\mathcal{E}^\bullet_2,\mathcal{C}^\bullet)) \cong \Ext^1_X(\ZZ/2\ZZ,\mathbb{G}_{m,X})$$ to $$\mathfrak{b}+N(\mathfrak{b}') \in \Pic X/2 \Pic X \cong \Ext^2_X(\ZZ/2\ZZ,\mathbb{G}_{m,X})$$  where $\mathfrak{b}' \in \Pic(Y) $ is an element such that $$\mathfrak{b} = \mathfrak{b}'-\mathfrak{b}'^\sigma$$ in $\Pic(Y)$ for $\sigma \in \Gal(L/K)$ a generator.
\end{corollary}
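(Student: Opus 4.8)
The plan is to run the explicit cochain computation used to prove Corollary \ref{cor:cap1}, one homological degree higher. Represent $(b,\mathfrak b)$ by a cocycle in the degree-$1$ term $K^*\oplus\Div X$ of $\Hom(\mathcal E_2,\mathcal C^\bullet)$; being a cocycle means precisely $\divis(b)=-2\mathfrak b$. Under $\Hom(\pi(u),\mathcal C^\bullet)$ this cocycle maps, up to the sign conventions fixed in the proof of Corollary \ref{cor:cap1} (compare the computation of $\Hom(\pi(u),\mathcal C^\bullet)(-1)$ there), to $(b,\mathfrak b,0)$ in the degree-$2$ term $(K^*\oplus\Div X)\oplus\Div Y$ of $\Hom(\Cone(\hat u),\mathcal C^\bullet)$. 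Since $\Hom(q(\hat u),\mathcal C^\bullet)$ induces isomorphisms on $H^i$ for $i\le 2$, the class $c_x^\sim(b,\mathfrak b)\in\Pic X/2\Pic X$ is computed by finding $\eta\in\Div X$ and $(\alpha,D,\gamma)\in K^*\oplus\Div Y\oplus L^*$ with
$$g^2(\eta)=(b,\mathfrak b,0)+d^1(\alpha,D,\gamma),$$
where $g^2(\mathfrak a)=(1,\mathfrak a,i(\mathfrak a))$, $i$ denotes pullback of divisors, and $d^1$ is the differential of $\Hom(\Cone(\hat u),\mathcal C^\bullet)$ written out in the proof of Corollary \ref{cor:cap1}; the answer is the class of $\eta$ in $\Pic X/2\Pic X$.

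Spelling out the three coordinates gives $(A)$ $b=\alpha^2 N_{L/K}(\gamma)^{-1}$ in $K^*$, $(B)$ $\eta=\mathfrak b+\divis(\alpha)+N_{L/K}(D)$ in $\Div X$, and $(C)$ $i(\eta)=2D+\divis(\gamma)$ in $\Div Y$. Condition $(A)$ says exactly that $b\in N_{L/K}(L^*)$ — the square $\alpha^2$ is harmless because $(K^*)^2=N_{L/K}(K^*)\subseteq N_{L/K}(L^*)$ — and this is where the hypotheses on $K$ are used. For every prime $\mathfrak p$ of $\mathcal O_K$ the valuation $v_{\mathfrak p}(b)=-2v_{\mathfrak p}(\mathfrak b)$ is even, so $b$ is a norm from each finite completion of $L$: trivially at split primes, and at inert primes because the norm group of an unramified quadratic extension of local fields is the subgroup of elements of even valuation. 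At the archimedean places $b$ is a local norm since $K$ is totally imaginary, so all its completions are complex. By the Hasse norm theorem for the cyclic extension $L/K$ we conclude $b\in N_{L/K}(L^*)$; fix $\gamma_0\in L^*$ with $N_{L/K}(\gamma_0)=b$ and put $\alpha=1$, $\gamma=\gamma_0^{-1}$, which solves $(A)$.

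With these choices $(B)$ forces $\eta=\mathfrak b+N_{L/K}(D)$, and substituting this into $(C)$ while using the identity $i\circ N_{L/K}=1+\sigma$ on $\Div Y$ reduces everything to solving
$$(1-\sigma)D=i(\mathfrak b)+\divis(\gamma_0)$$
for a divisor $D\in\Div Y$. Its right-hand side has norm $N_{L/K}(i(\mathfrak b))+\divis\bigl(N_{L/K}(\gamma_0)\bigr)=2\mathfrak b+\divis(b)=0$, hence lies in $\ker\bigl(N_{L/K}\colon\Div Y\to\Div X\bigr)$, and that kernel is $(1-\sigma)\Div Y$: decompose $\Div Y$ over $\ZZ[\Gal(L/K)]$ into the free summands $\ZZ[\Gal(L/K)]$ attached to the primes of $X$ split in $L$, where $N_{L/K}$ is the augmentation with kernel $(1-\sigma)\ZZ[\Gal(L/K)]$, and the trivial summands $\ZZ$ attached to the inert primes, where $N_{L/K}$ is multiplication by $2$ and both $\ker N_{L/K}$ and $(1-\sigma)\ZZ$ vanish. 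Thus $D$ exists; let $\mathfrak b'\in\Pic(Y)$ be its class. Reducing the displayed equation modulo $\divis(L^*)$ shows that $(1-\sigma)\mathfrak b'$ is the pullback of $\mathfrak b$ to $\Pic(Y)$, i.e. $\mathfrak b=\mathfrak b'-\sigma(\mathfrak b')$ there, while $(B)$ (with $\alpha=1$) identifies $c_x^\sim(b,\mathfrak b)$ with the class of $\mathfrak b+N_{L/K}(D)=\mathfrak b+N(\mathfrak b')$ in $\Pic X/2\Pic X$, as desired.

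The step I expect to be the main obstacle is the existence of $D$, which rests on two independent ingredients: the arithmetic fact that $b$ is a global norm from $L$ — where the even-divisor property of a class coming from an unramified quadratic extension and the total imaginarity of $K$ are both used, via Hasse's norm theorem — and the homological identity $\ker\bigl(N_{L/K}\colon\Div Y\to\Div X\bigr)=(1-\sigma)\Div Y$. What is left over is routine but delicate bookkeeping: keeping the signs in the differentials of $\Hom(\Cone(\hat u),\mathcal C^\bullet)$ consistent with the conventions of Corollary \ref{cor:cap1} (though, since the target has exponent $2$, sign errors in the $\Div X$- and $\Pic$-level identities do not affect the final answer), and checking that $\mathfrak b+N(\mathfrak b')\in\Pic X/2\Pic X$ is independent of the choices of $\gamma_0$, of $D$, and of the cocycle representative — two choices of $D$ differ by an element of $\ker\bigl(1-\sigma\colon\Div Y\to\Div Y\bigr)=i(\Div X)$, whose $N_{L/K}$-image lies in $2\Div X$, so the class does not change.
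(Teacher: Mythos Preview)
Your argument is correct and follows the same overall strategy as the paper: push the cycle through $\Hom(\pi(u),\mathcal C^\bullet)$, then modify by a coboundary until it lands in the image of $g^2$. The one substantive difference is in how the coboundary is produced. The paper first invokes Furtw\"angler's theorem (Hilbert~90 for ideal \emph{classes}) to write $i(\mathfrak b)=(\mathfrak b'-\sigma\mathfrak b')+\divis(t)$ in $\Div Y$, and only afterwards appeals to Hasse's norm theorem to turn the leftover unit into a norm. You reverse the order: you apply Hasse first to $b$ itself (using that $\divis(b)$ is even and $K$ is totally imaginary, so $b$ is a local norm everywhere), and then solve $(1-\sigma)D=i(\mathfrak b)+\divis(\gamma_0)$ by the \emph{elementary} Hilbert~90 on $\Div Y$, which is just the module-theoretic observation that, in an unramified quadratic extension, $\Div Y$ decomposes over $\ZZ[\Gal(L/K)]$ into free and trivial summands. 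Your route avoids the outside reference to Furtw\"angler and is slightly more self-contained; the paper's route makes the role of $\mathfrak b'\in\Pic(Y)$ visible from the outset. Both arrive at the same formula, and your checks of well-definedness (independence of $D$ via $\ker(1-\sigma)=i(\Div X)$ and $N_{L/K}\circ i=2$) are correct.
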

\begin{proof}
 We use notation as in the proof of Corollary \ref{cor:cap1}. Let $z=(b, \mathfrak{b})$ be a cycle representing the class $(b,\mathfrak{b}) \in H^1(\Hom(\mathcal{E}^\bullet_2,\mathcal{C}^\bullet)).$  Then $\Hom(\pi(u),\mathcal{C}^\bullet)(z)$ is the cycle $(b,\mathfrak{b},1) \in (K^* \oplus \Div X) \oplus \Div Y.$ We want to find the cycle which is, modulo boundaries, congruent to a cycle of the form $(1,\mathfrak{a},i(\mathfrak{a}))$ where $\mathfrak{a} \in \Div X.$ Note that since $$\divis(b)+2\mathfrak{b}=0$$ we see that $$i(\mathfrak{b}) \in \ker N_{L/K} : \Pic Y \rightarrow \Pic X.$$ By Furtwängler's theorem  \cite[IV, Thm. 1]{LemmermeyerPrincipal} gives thus that $$i(\mathfrak{b}) = (\mathfrak{b'}-\mathfrak{b'}^\sigma)+ \divis(t)$$ for $\mathfrak{b'}$ an ideal in $L,t \in L^*$ and $\sigma \in \Gal(L/K)$ a generator.  By applying the norm to both sides we see that $$\divis(N(t)) = -\divis(b)$$ so that $N(t) = b^{-1} u$ for $u$ a unit of $K.$ Since $L/K$ is an unramified extension, by Hasse's norm theorem \cite{HasseNorm} and the fact that units are always norms in unramified extensions of local fields, there is a $v \in L^*$ such that $N(v) = u^{-1}.$ If we now mod out $(b,\mathfrak{b},1)$ by the image of $(1,\mathfrak{b}',v \cdot  t)$ under $d^1_{\Cone(\Hom(\hat{u}),\mathcal{C}^\bullet)}$ we get the new tuple $$(1,\mathfrak{b}+N(\mathfrak{b}'),2\mathfrak{b}' +\divis(t))$$ and this is in the image of $\Hom(\pi(u),\mathcal{C}^\bullet),$ hence we get our Corollary.

\end{proof}
By \ref{dualcup} this gives us a description of the cup product as well. In the following propositions, given $y \in \Ext^{3-i}_X(\ZZ/2\ZZ_X,\mathbb{G}_{m,X})^\sim$ and $z \in \Ext^{3-i}_X(\ZZ/2\ZZ_X,\mathbb{G}_{m,X})$ we will let $ \langle y, z \rangle $ denote the evaluation of $y$ on $z.$
\begin{prop} \label{prop:cupgen}
Let $X = \Spec \mathcal{O}_K$ be a totally imaginary number field and identify $H^2(X,\ZZ/2\ZZ)$ with $$\Ext^1_X(\ZZ/2\ZZ_X,\mathbb{G}_{m,X})^\sim$$ where $^\sim$ denotes the Pontryagin dual and let $x \in H^1(X,\ZZ/2\ZZ)$ be a cohomology class represented by the unramified quadratic extension $Y = \Spec \mathcal{O}_L$ where $L= K(\sqrt{c}).$  Then, for $$y \in H^2(X,\ZZ/2\ZZ),$$  we have that $$c_x(y) = x \cup y \neq 0$$ if and only if $\langle y , (c,\mathfrak{c}^{-1}) \rangle \neq 0,$ where $\mathfrak{c}$  is an ideal of $\Spec \mathcal{O}_K$  such that $\divis(c) = 2\mathfrak{c}.$
\end{prop}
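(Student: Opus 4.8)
The plan is to assemble three ingredients already in place: Lemma~\ref{lemma:transfercup}, which identifies the cup-product map $c_x=-\cup x$ with the connecting homomorphism $\delta_x$ of the short exact sequence~\eqref{xexact}, i.e. with $H^\ast$ of the morphism $\delta_x\colon\ZZ/2\ZZ_X\to\ZZ/2\ZZ_X[1]$; Lemma~\ref{dualcup}, which transports such a morphism across Artin--Verdier duality; and Corollary~\ref{cor:cap1}, which computes the resulting dual map on $\Ext^0$ explicitly. First I would combine Lemmas~\ref{lemma:transfercup} and~\ref{dualcup} to record that, under the Artin--Verdier identifications $H^2(X,\ZZ/2\ZZ)\cong\Ext^1_X(\ZZ/2\ZZ_X,\mathbb{G}_{m,X})^\sim$ and $H^3(X,\ZZ/2\ZZ)\cong\Ext^0_X(\ZZ/2\ZZ_X,\mathbb{G}_{m,X})^\sim=\mu_2(K)^\sim$ (Corollary~\ref{cor:extdescrip}), the map $c_x\colon H^2(X,\ZZ/2\ZZ)\to H^3(X,\ZZ/2\ZZ)$ is the Pontryagin dual of $c_x^\sim\colon\Ext^0_X(\ZZ/2\ZZ_X,\mathbb{G}_{m,X})\to\Ext^1_X(\ZZ/2\ZZ_X,\mathbb{G}_{m,X})$. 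Unwinding what ``Pontryagin dual of a map'' means, this yields the adjunction
\[
\langle c_x(y),z\rangle=\langle y,c_x^\sim(z)\rangle
\]
for every $y\in\Ext^1_X(\ZZ/2\ZZ_X,\mathbb{G}_{m,X})^\sim$ and every $z\in\Ext^0_X(\ZZ/2\ZZ_X,\mathbb{G}_{m,X})=\mu_2(K)$.

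Next I would reduce nonvanishing of $x\cup y=c_x(y)$ to a single evaluation. Since $K$ has characteristic zero, $\mu_2(K)=\{\pm1\}$ is cyclic of order two with generator $-1$, so $H^3(X,\ZZ/2\ZZ)\cong\mu_2(K)^\sim$ is an order-two group and one of its elements is nonzero precisely when it does not vanish on $-1$. Hence $x\cup y\neq0$ if and only if $\langle c_x(y),-1\rangle\neq0$, which by the adjunction above is equivalent to $\langle y,c_x^\sim(-1)\rangle\neq0$. Now I apply Corollary~\ref{cor:cap1}: since $x$ is represented by the unramified quadratic extension $L=K(\sqrt c)$ with $\divis(c)=2\mathfrak c$, that corollary (taking $a=c$, $\mathfrak a=\mathfrak c$) gives $c_x^\sim(-1)=(c^{-1},\mathfrak c)$ in $\Ext^1_X(\ZZ/2\ZZ_X,\mathbb{G}_{m,X})=Z_1/B_1$. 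Because $\Ext^1_X(\ZZ/2\ZZ_X,\mathbb{G}_{m,X})$ is annihilated by $2$, the class $(c^{-1},\mathfrak c)$ coincides with its own inverse; and its inverse in $K^\ast\oplus\Div X$ is $(c,\mathfrak c^{-1})$, which represents the same class since $(c^{-1},\mathfrak c)\cdot(c,\mathfrak c^{-1})=(1,0)\in B_1$. Therefore $\langle y,c_x^\sim(-1)\rangle=\langle y,(c,\mathfrak c^{-1})\rangle$, and combining with the previous reduction gives $x\cup y\neq0\iff\langle y,(c,\mathfrak c^{-1})\rangle\neq0$, which is the assertion.

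All the substantive work has already been done upstream --- in the crossed-module/connecting-homomorphism description of Lemma~\ref{lemma:transfercup}, the duality compatibility of Lemma~\ref{dualcup}, and the cycle-level computation of Corollary~\ref{cor:cap1}. The main obstacle in the proof itself is purely bookkeeping: checking that the ``corresponds under Artin--Verdier duality'' of Lemma~\ref{dualcup} really produces the adjunction $\langle c_x(y),z\rangle=\langle y,c_x^\sim(z)\rangle$ with degrees correctly matched ($H^2\leftrightarrow\Ext^1$, $H^3\leftrightarrow\Ext^0$), pinning down the generator of $\mu_2(K)$ as $-1$, and absorbing the inversion $(c^{-1},\mathfrak c)\leftrightarrow(c,\mathfrak c^{-1})$ via $2$-torsion in $\Ext^1_X(\ZZ/2\ZZ_X,\mathbb{G}_{m,X})$. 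I do not expect a genuine difficulty beyond this.
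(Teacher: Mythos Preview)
Your proposal is correct and follows essentially the same approach as the paper: dualize $c_x$ via Lemma~\ref{dualcup} (after Lemma~\ref{lemma:transfercup}), reduce nonvanishing of $c_x(y)$ to evaluation on the generator $-1\in\mu_2(K)$, and then invoke Corollary~\ref{cor:cap1}. The only difference is that you make explicit the $2$-torsion identification $(c^{-1},\mathfrak c)=(c,\mathfrak c^{-1})$ in $\Ext^1_X(\ZZ/2\ZZ_X,\mathbb{G}_{m,X})$, which the paper's proof leaves implicit.
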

\begin{proof}
We calculate $$c_x : \Ext^1_X(\ZZ/2\ZZ_X,\mathbb{G}_{m,X})^\sim \rightarrow \Ext^0_X(\ZZ/2\ZZ,\mathbb{G}_{m,X})^\sim.$$ For $y \in H^2(X,\ZZ/2\ZZ) \cong \Ext^1_X(\ZZ/2\ZZ_X,\mathbb{G}_{m,X})^\sim$ Corollary ~\ref{cor:cap1} gives that$$c_x(y) \in \Ext^0_X(\ZZ/2\ZZ_X,\mathbb{G}_{m,X})^\sim$$ is non-zero if and only if $$\langle y,  c_x^\sim(-1)  \rangle =  \langle y, (c,\mathfrak{c}^{-1}) \rangle \neq 0.$$ 
\end{proof}
\begin{prop} \label{prop:doublecup}
Let $X = \Spec \mathcal{O}_K$ be a totally imaginary number field and identify $H^2(X,\ZZ/2\ZZ)$ with $$ \Ext^1_X(\ZZ/2\ZZ_X,\mathbb{G}_{m,X})^\sim$$ where $^\sim$ denotes the Pontryagin dual and let $x \in H^1(X,\ZZ/2\ZZ)$ be a cohomology class represented by the unramified quadratic extension $Y = \Spec \mathcal{O}_L$ where $L=K(\sqrt{c}).$ For $y$ in $ H^1(X,\ZZ/2\ZZ) \cong (\Pic X / 2 \Pic X)^\sim,$ represented by the unramified extension $Y' = \Spec \mathcal{O}_{Q},$ $$c_x(y) \in \Ext^1_X(\ZZ/2\ZZ_X,\mathbb{G}_{m,X})^\sim$$ is given by the formula $$\langle c_x(y), (a, \mathfrak{a}) \rangle =\langle y, \mathfrak{a}+ N_{L/K}(\mathfrak{a'}) \rangle $$  where $(a,\mathfrak{a}) \in \Ext^1_X(\ZZ/2\ZZ_X,\mathbb{G}_{m,X})$ and $$\mathfrak{a}' \in \Pic(Y)$$ is such that $ \mathfrak{a} = \mathfrak{a}'/\mathfrak{a}'^\sigma$ for $\sigma \in \Gal(L/K)$ a generator. In particular, $$<c_x(y),(a,\mathfrak{a})> =0$$ if and only if $\mathfrak{a}+N_{L/K}(\mathfrak{a}')$ is in the image of $N_{Q/K}.$
\end{prop}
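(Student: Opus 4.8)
The plan is to read off $c_x$ on cohomology as the Pontryagin dual of the $\Ext$-map already computed in Corollary~\ref{cor:cap2}, and then to translate the resulting vanishing condition into class field theory. By Lemma~\ref{lemma:transfercup} the operator $c_x\colon H^i(X,\ZZ/2\ZZ)\to H^{i+1}(X,\ZZ/2\ZZ)$ is the boundary map attached to $\delta_x\colon \ZZ/2\ZZ_X\to\ZZ/2\ZZ_X[1]$, and Lemma~\ref{dualcup} applied to $\delta_x$ shows that, under the Artin--Verdier identifications $H^i(X,\ZZ/2\ZZ)\cong\Ext^{3-i}_X(\ZZ/2\ZZ_X,\mathbb{G}_{m,X})^\sim$, the map $c_x\colon H^1(X,\ZZ/2\ZZ)\to H^2(X,\ZZ/2\ZZ)$ is the Pontryagin dual of
$$c_x^\sim\colon \Ext^1_X(\ZZ/2\ZZ_X,\mathbb{G}_{m,X})\longrightarrow \Ext^2_X(\ZZ/2\ZZ_X,\mathbb{G}_{m,X}).$$
Hence for $y\in H^1(X,\ZZ/2\ZZ)$ and any $(a,\mathfrak{a})\in\Ext^1_X(\ZZ/2\ZZ_X,\mathbb{G}_{m,X})$ one has the adjunction identity $\langle c_x(y),(a,\mathfrak{a})\rangle=\langle y,c_x^\sim(a,\mathfrak{a})\rangle$. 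Now Corollary~\ref{cor:cap2} evaluates the argument on the right: $c_x^\sim(a,\mathfrak{a})=\mathfrak{a}+N_{L/K}(\mathfrak{a}')$ in $\Pic X/2\Pic X\cong\Ext^2_X(\ZZ/2\ZZ_X,\mathbb{G}_{m,X})$, where $\mathfrak{a}'\in\Pic(Y)$ is chosen so that $\mathfrak{a}=\mathfrak{a}'/\mathfrak{a}'^\sigma$ for $\sigma\in\Gal(L/K)$ a generator. Substituting gives the displayed formula $\langle c_x(y),(a,\mathfrak{a})\rangle=\langle y,\mathfrak{a}+N_{L/K}(\mathfrak{a}')\rangle$.

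For the final clause I would use $H^1(X,\ZZ/2\ZZ)\cong(\Pic X/2\Pic X)^\sim=(\Cl(K)/2)^\sim$ together with unramified class field theory. The $\ZZ/2\ZZ$-torsor $Y'=\Spec\mathcal{O}_Q$ representing $y$ is the (possibly trivial) unramified quadratic extension cut out by a quadratic character $\chi_Q\colon \Cl(K)\to\ZZ/2\ZZ$, and under the above identification $y$ is exactly $\chi_Q$, which already factors through $\Cl(K)/2$. By the Artin reciprocity isomorphism $\Cl(K)/N_{Q/K}\Cl(Q)\cong\Gal(Q/K)$ the image of $N_{Q/K}\colon\Cl(Q)\to\Cl(K)$ is precisely $\ker\chi_Q$; moreover $N_{Q/K}(\mathfrak{j}\mathcal{O}_Q)=\mathfrak{j}^2$ for every ideal $\mathfrak{j}$ of $K$, so $2\Cl(K)\subseteq\operatorname{im}N_{Q/K}$, and therefore the image of $N_{Q/K}$ inside $\Cl(K)/2$ is exactly $\ker(\bar\chi_Q)=\ker y$. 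Consequently $\langle c_x(y),(a,\mathfrak{a})\rangle=\langle y,\mathfrak{a}+N_{L/K}(\mathfrak{a}')\rangle$ vanishes if and only if $\mathfrak{a}+N_{L/K}(\mathfrak{a}')$ lies in the image of $N_{Q/K}$, which is the asserted criterion; the degenerate case $Q=K$ (i.e. $y=0$) is consistent, since then both sides hold for all $(a,\mathfrak{a})$.

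The step requiring genuine care — and the one I expect to be the main obstacle — is the bookkeeping behind the adjunction $\langle c_x(y),v\rangle=\langle y,c_x^\sim(v)\rangle$: one must check that the functor $\RHom_X(-,\mathbb{G}_{m,X})^\sim[3]$ of Lemma~\ref{dualcup}, after passing to the groups $H^i$, literally turns $c_x$ into the honest Pontryagin dual of $c_x^\sim$ with no stray shift or sign, and that the evaluation pairing $\langle\,,\rangle$ used throughout is the one induced by Artin--Verdier duality (and not a twist of it). Granting this, everything else — the well-definedness of $N_{L/K}(\mathfrak{a}')$ modulo squares and the applicability of Furtw\"angler's theorem that produces $\mathfrak{a}'$ — is already contained in Corollary~\ref{cor:cap2}, and the class field theory input is entirely standard.
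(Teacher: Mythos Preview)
Your proof is correct and follows essentially the same route as the paper: dualize $c_x$ via Artin--Verdier duality (Lemma~\ref{dualcup}) to the map $c_x^\sim$ on $\Ext$-groups, invoke Corollary~\ref{cor:cap2} to compute $c_x^\sim(a,\mathfrak{a})$, and then use Artin reciprocity to identify $\ker y$ with the image of $N_{Q/K}$. The extra care you take with the inclusion $2\Cl(K)\subseteq\operatorname{im}N_{Q/K}$ and the degenerate case $Q=K$ is harmless elaboration, and the bookkeeping concern you flag about the adjunction is exactly what Lemma~\ref{dualcup} is there to settle.
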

\begin{proof}
We calculate $$c_x:\Ext^2_X(\ZZ/2\ZZ_X,\mathbb{G}_{m,X})^\sim \rightarrow \Ext^1_X(\ZZ/2\ZZ_X,\mathbb{G}_{m,X})^\sim.$$ By Artin reciprocity $y \in (\Pic X/2 \Pic X)^\sim$ corresponds to an map $\Pic(X)/2\Pic(X) \rightarrow \ZZ/2\ZZ$ with kernel $N_{Q/K} (\Pic Y').$ Now, Corollary \ref{cor:cap2} gives us that $$x \cup y = c_x(y) \in \Ext^1_X(\ZZ/2\ZZ_X,\mathbb{G}_{m,X})^\sim$$ is the element given by the formula $$\langle c_x,(a,\mathfrak{a}) \rangle = \langle y, c_x^\sim(a,\mathfrak{a})) \rangle =  \langle y,  a +N(\mathfrak{a}') \rangle$$ where $$(a, \mathfrak{a}) \in \Ext^1_X(\ZZ/2\ZZ_X,\mathbb{G}_{m,X})$$ and $\mathfrak{a}'$ is as in the proposition. This proves our claimed formula and the fact that $$<c_x(y),(a,\mathfrak{a})> = 0$$ if and only if $\mathfrak{a}+N_{L/K}(\mathfrak{a'})$ is in the image of $$N_{Q/K}:\Pic Y' \rightarrow \Pic X.$$ 
\end{proof}
\begin{remark} \label{rmk:samecup}
Note that in Proposition \ref{prop:doublecup}, if we have that $L=Q,$ then to check whether $<x \cup x,(a,\mathfrak{a})> =0,$ it is enough to see that $\mathfrak{a}$ is inert in $L,$ since then $\mathfrak{a}$ is not in the image of the norm map by Artin reciprocity.
\end{remark} 
\begin{corollary}
Let $X = \Spec \mathcal{O}_K$ be a totally imaginary number field and identify $H^2(X,\ZZ/2\ZZ)$ with $ \Ext^1_X(\ZZ/2\ZZ_X,\mathbb{G}_{m,X})^\sim$ where $^\sim$ denotes the Pontryagin dual and let $x \in H^1(X,\ZZ/2\ZZ)$ be a cohomology class represented by the unramified quadratic extension $Y = \Spec \mathcal{O}_L$ where $L=K(\sqrt{c}).$ Then $$x \cup x \neq 0$$ if and only if some $2$-torsion element in $\Pic(X)$ is not in the image of $N_{L|K}:\Pic(Y) \rightarrow \Pic(X).$
\end{corollary}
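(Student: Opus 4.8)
The plan is to combine the case $y=x$ of Proposition~\ref{prop:doublecup} with the observation that the homomorphism $\Ext^1_X(\ZZ/2\ZZ_X,\mathbb{G}_{m,X}) = Z_1/B_1 \to \Pic(X)$ sending $(a,\mathfrak{a})$ to $[\mathfrak{a}]$ has image exactly the $2$-torsion subgroup $\Pic(X)[2]$.

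First I would specialize Proposition~\ref{prop:doublecup} to $y = x$, so that the extension $Y' = \Spec\mathcal{O}_Q$ representing $y$ coincides with $Y = \Spec\mathcal{O}_L$. Unwinding the Pontryagin duality $H^2(X,\ZZ/2\ZZ) \cong \Ext^1_X(\ZZ/2\ZZ_X,\mathbb{G}_{m,X})^\sim$, the class $x\cup x$ is zero precisely when the functional it represents vanishes, that is, precisely when for every $(a,\mathfrak{a}) \in Z_1/B_1$ the class $[\mathfrak{a}] + N_{L/K}(\mathfrak{a}')$ lies in $N_{L/K}(\Pic Y)$, where $\mathfrak{a}' \in \Pic(Y)$ is the class produced from $\mathfrak{a}$ by Furtw\"angler's theorem exactly as in the proof of Proposition~\ref{prop:doublecup}. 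Since $N_{L/K}(\mathfrak{a}')$ is patently a norm, this collapses to the single requirement that $[\mathfrak{a}] \in N_{L/K}(\Pic Y)$ for every $(a,\mathfrak{a}) \in Z_1/B_1$.

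Next I would identify the image of the map $(a,\mathfrak{a}) \mapsto [\mathfrak{a}]$ on $Z_1/B_1$. It is well defined because on $B_1$ the divisor $\mathfrak{a}= -\divis(b)$ is principal; for $(a,\mathfrak{a}) \in Z_1$ one has $2\mathfrak{a} = -\divis(a)$, whence $[\mathfrak{a}]\in\Pic(X)[2]$; and conversely every class in $\Pic(X)[2]$ is attained, since a representative divisor $\mathfrak{a}_0$ satisfies $2\mathfrak{a}_0 = -\divis(a)$ for a suitable $a \in K^*$, producing a preimage $(a,\mathfrak{a}_0)\in Z_1$. Hence the image is exactly $\Pic(X)[2]$, and combining this with the previous paragraph we conclude that $x\cup x = 0$ if and only if $\Pic(X)[2] \subseteq N_{L/K}(\Pic Y)$; equivalently, $x\cup x \neq 0$ if and only if some $2$-torsion element of $\Pic(X)$ fails to lie in the image of $N_{L/K}\colon \Pic(Y)\to\Pic(X)$, which is the assertion. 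The only substantive input is Proposition~\ref{prop:doublecup} itself (and through it Furtw\"angler's theorem and Hasse's norm theorem); the two remaining points --- the surjectivity onto $\Pic(X)[2]$ and the harmlessness of the $N_{L/K}(\mathfrak{a}')$ term --- are where a little care is needed, but I do not expect either to be a real obstacle.
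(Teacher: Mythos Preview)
Your proof is correct and follows exactly the approach the paper sets up: you specialize Proposition~\ref{prop:doublecup} to $y=x$, observe (as in Remark~\ref{rmk:samecup}) that the $N_{L/K}(\mathfrak{a}')$ term is already a norm and hence drops out, and then identify the range of $(a,\mathfrak{a})\mapsto[\mathfrak{a}]$ as $\Pic(X)[2]$. The paper states the corollary without proof, and your argument is precisely the intended filling-in.
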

We can now prove Proposition \ref{prop:triplecup} which we recall for the convenience of the reader. \triplecup*
\begin{proof}[Proof of Proposition \ref{prop:triplecup}]
By abuse of notation, we will also denote by $x$ the canonical element of $(\Pic(X)/2\Pic(X))^\sim$ associated to $p:Y \rightarrow X.$  Proposition ~\ref{prop:cupgen} and Remark \ref{rmk:samecup} gives us that $x\cup x \cup y$ is non-zero if and only if $\langle c_x(x), (d,\divis(d)/2) \rangle \neq 0$ which holds by Proposition \ref{prop:doublecup} if and only if $ \langle x, \divis(d)/2\rangle \neq 0$ which is true precisely when $\divis(d)/2$ is not in the image of the norm map $$N_{L|K}: \Pic Y \cong Cl(L) \rightarrow \Pic X \cong Cl(K).$$ But note that if we write $\divis(d)/2  = \prod_i \mathfrak{p}_i^{e_i},$ it is easy to see that $\divis(d)/2$ is in the image of the norm map if and only if $\prod_{\mathfrak{p}_i \text{ inert in } L} \mathfrak{p_i}^{e_i}$ is. We know that the Artin symbol $$\genfrac(){}{0}{L/K}{}:Cl(K) \rightarrow \ZZ/2\ZZ$$ vanishes on an ideal class $I$ when $I$  is in the image of $$N_{L|K} : Cl(L) \rightarrow Cl(K).$$ Artin reciprocity then gives that for a prime ideal $\mathfrak{p}$ of $\mathcal{O}_K,$  $$\genfrac(){}{0}{L/K}{\mathfrak{p}} = 0$$  exactly when $\mathfrak{p}$ is inert in $L.$ This yields our proposition since the Artin symbol is additive. 
\end{proof}

\nocite{EmbeddingFaddeev}
\nocite{SchlankBrauer}
\nocite{MilneADT}
\bibliographystyle{plain}
\bibliography{cupproducts}{}

\end{document}